\documentclass[11pt,twoside]{amsart}

\title{Lefschetz operators on convex valuations}

\author{Leo Brauner}
\address{Institute of Discrete Mathematics and Geometry \newline%
	 \indent Vienna University of Technology \newline%
	 \indent 1040 Wien, Austria}
\email{leo.brauner@tuwien.ac.at}

\author{Georg C.\ Hofstätter}
\address{Institute for Mathematics \newline%
	\indent Friedrich-Schiller-University Jena \newline%
	\indent 07743 Jena, Germany}
\email{georg.hofstaetter@uni-jena.de}

\author{Oscar Ortega-Moreno}
\address{Institute of Discrete Mathematics and Geometry \newline%
	\indent Vienna University of Technology \newline%
	\indent 1040 Wien, Austria}
\email{oscar.moreno@tuwien.ac.at}

\usepackage[english]{babel}
\usepackage[T1]{fontenc}
\usepackage{lmodern}
\usepackage{amsmath,amsthm,amssymb,amsfonts}
\usepackage[sort&compress,numbers]{natbib}
\usepackage{hyperref}
\usepackage[capitalise]{cleveref}
\usepackage[left=4cm,right=4cm,top=3cm,bottom=3cm,includeheadfoot]{geometry}
\usepackage{bbm}
\usepackage{xcolor}
\usepackage{enumerate,enumitem}
\usepackage{fancyhdr}
\usepackage{tikz}
\usepackage{float}

\hyphenpenalty=5000

%\setlength{\headheight}{15.2pt}
%\renewcommand{\headrulewidth}{0pt}
%\pagestyle{fancy}
%\fancyhead[RE,LO]{}
%\fancyhead[LE,RO]{\thepage}
%\fancyhead[CO]{\textsc{Lefschetz operators scalar and Minkowski valuations}}
%\fancyhead[CE]{\textsc{L.\ Brauner, G.\ C.\ Hofstätter, O.\ Ortega-Moreno}}
%\fancyfoot[C]{}

\setlength{\bibsep}{0pt plus 0.3ex}

\hypersetup{colorlinks=true,
	linkcolor=teal,
	citecolor=orange}

\theoremstyle{plain}

\newtheorem{thmintro}{Theorem}
\newtheorem*{thmintro*}{Theorem}

\newtheorem*{corintro*}{Corollary}

\theoremstyle{plain}
\newtheorem{lem}{Lemma}[section]
\newtheorem{prop}[lem]{Proposition}
\newtheorem{thm}[lem]{Theorem}
\newtheorem*{thm*}{Theorem}
\newtheorem{cor}[lem]{Corollary}

\theoremstyle{definition}
\newtheorem{defi}[lem]{Definition}

\theoremstyle{remark}
\newtheorem{exl}[lem]{Example}
\newtheorem{rem}[lem]{Remark}

\numberwithin{equation}{section}

\crefname{equation}{}{}
\crefname{lem}{Lemma}{Lemmas}
\crefname{thm}{Theorem}{Theorems}
\crefname{rem}{Remark}{Remarks}
\crefname{enumi}{part}{parts}

\newcommand{\N}{\mathbb{N}}			% natural numbers
\newcommand{\R}{\mathbb{R}}			% real numbers
\renewcommand{\S}{\mathbb{S}}		% unit sphere
\renewcommand{\H}{\mathbb{H}}		% hemisphere
\newcommand{\Gr}{\mathrm{Gr}}		% Grassmannian
\newcommand{\AGr}{\overline{\mathrm{Gr}}}		% affin Grassmannian
\newcommand{\Fl}{\mathrm{Fl}}		% flag manifold
\newcommand{\D}{\mathcal{D}}		% distributions
\newcommand{\SO}{\mathrm{SO}}		% the group SO(n)

\newcommand{\CC}{\mathcal{C}}		% family of convex bodies

\newcommand{\K}{\mathcal{K}}		% convex bodies
\newcommand{\Val}{\mathbf{Val}}		% valuations
\newcommand{\MVal}{\mathbf{MVal}}	% Minkowski valuations

\newcommand{\MSO}{\mathrm{M}}		% mean section operator

\newcommand{\abs}[1]{\lvert #1 \rvert}						% absolute value
\newcommand{\norm}[1]{\lVert #1 \rVert}						% norm
\newcommand{\restr}[2]{\left. #1 \right|_{#2}}				% restriction
\newcommand{\flag}[2]{\genfrac{[}{]}{0pt}{}{#1}{#2}}		% flag coefficient
\newcommand{\pr}{\mathrm{pr}}								% projection
\newcommand{\Id}{\mathrm{Id}}								% Identity map

\newcommand{\RT}{\tilde{\mathcal{R}}}	% Radon type transform
\newcommand{\LL}{\mathfrak{L}}			% Hard-Lefschetz operator

\newcommand{\Kl}{\mathrm{Kl}}		% Klain function
		% Schneider function
\newcommand{\KS}{\mathrm{KS}}		% Klain-Schneider functoion

\begin{document}
	
	\begin{abstract}
		We investigate the action of Alesker's Lefschetz operators on translation invariant valuations on convex bodies. For scalar valued valuations, we describe this action on the level of Klain--Schneider functions by a Radon type transform, generalizing a result by Schuster and Wannerer. In the case of rotationally equivariant Minkowski valuations, the Lefschetz operators act on the generating function as a convolution transform. We show that the convolution kernel satisfies a Legendre type differential equation, and thus, is a strictly positive function that is smooth up to one point.
	\end{abstract}
	
	\maketitle
	\thispagestyle{empty}

	%%%%%%%%%%%%%%%%%%%%%%%%%%%%%%%%%%%%%%%%%%%%%%%%%%%%%%%%%%%%%%%%%%%%%%%%%%%
	\section{Introduction}
	\label{sec:intro}
	%%%%%%%%%%%%%%%%%%%%%%%%%%%%%%%%%%%%%%%%%%%%%%%%%%%%%%%%%%%%%%%%%%%%%%%%%%%

	%%%%%%%%%%%%%%%%%%%%%%%%%%%%%%%%%%%%%%%%%%%%%%%%%%%%%%%%%%%%%%%%%%%%%%%%%%%
	\subsection*{Scalar valued valuations}
	%%%%%%%%%%%%%%%%%%%%%%%%%%%%%%%%%%%%%%%%%%%%%%%%%%%%%%%%%%%%%%%%%%%%%%%%%%%

	A \emph{valuation} on the space $\K(\R^n)$ of convex bodies (convex, compact subsets) in~$\R^n$ is a functional $\varphi:\K(\R^n) \to\R$ such that
	\begin{equation*}
		\varphi(K \cup L) + \varphi(K \cap L) = \varphi(K) + \varphi(L)
	\end{equation*}
	whenever $K, L, K\cup L \in \K(\R^n)$.
	Valuations have a long history in convex and integral geometry (see, e.g., \cite{Hadwiger1957,Ludwig2010a,Alesker1999,Bernig2011,Klain1997,Alesker2001,Alesker2018,Haberl2014,Alesker2003,Faifman2023}). We denote by $\Val$ the space of continuous, translation invariant valuations. By a classical result of McMullen~\cite{McMullen1980}, the space $\Val$ is the direct sum of the subspaces $\Val_i$ of valuations that are homogeneous of degree $i\in\{0,\ldots,n\}$
	(that is, $\varphi(\lambda K)=\lambda^i \varphi(K)$ for all $K\in\K(\R^n)$ and $\lambda\geq 0$).
	
	Motivated by the Hard Lefschetz theorem from Kähler geometry, Alesker~\cite{Alesker2003} introduced \emph{Lefschetz operators} on valuations. For $\varphi\in\Val$ and $K\in\K(\R^n)$,
	\begin{equation*}
		(\Lambda\varphi)(K)
		= \left. \frac{d}{dt} \right|_{t=0^+} \!\!\varphi(K+tB^n)
		\qquad\text{and}\qquad
		(\LL\varphi)(K)
		= \int_{\AGr_{n,n-1}} \!\! \varphi(K\cap H) ~dH,
	\end{equation*}
	where $B^n$ is the unit ball in $\R^n$ and $\AGr_{n,j}$ is the Grassmann manifold of affine \linebreak $j$-dimensional subspaces of $\R^n$, endowed with a (suitably normalized) rigid motion invariant measure.	
	The Lefschetz operators are a powerful tool in valuation theory, since they allow to transfer results between valuations of different degrees (see, e.g., \cite{Alesker2003,Parapatits2012,Berg2018,Bernig2007,Schuster2006,Bernig2024,Kotrbaty2023,Kotrbaty2022,Kotrbaty2021,Alesker2022}).
	The derivation operator $\Lambda$ decreases the degree of a valuation by one; the integral operator $\LL$ increases it by one.
	%In recent years, the Lefschetz operators have been applied in various areas of convex geometry, such as algebraic integral geometry and the Brunn--Minkowski theory (see, e.g., \cite{Alesker2003,Bernig2007,Schuster2006,Parapatits2012,Berg2018}).
	
	In this article, we investigate the action of the Lefschetz operators on two well-known representations of valuations: Klain--Schneider functions for scalar valued valuations and generating functions for Minkowski valuations.	
	Denoting by $\Gr_{n,i}$ the Grassmann manifold of $i$-dimensional subspaces of $\R^n$, Klain~\cite{Klain1995} showed that if $\varphi\in\Val_i$ is \emph{even} and $E\in\Gr_{n,i}$, then $\restr{\varphi}{E}=c_E \mathrm{vol}_E$ for some $c_E\in\R$, where $\restr{\varphi}{E}$ is the restriction of $\varphi$ to convex bodies in $E$.
	Its \emph{Klain function} $\Kl_{\varphi}\in C(\Gr_{n,i})$, $\Kl_{\varphi}: E\mapsto c_E$, uniquely determines $\varphi\in\Val_i$, as was proved by Klain~\cite{Klain2000}.
	
%	As a consequence of Hadwiger's~\cite{Hadwiger1957} famous characterization of the volume, every valuation $\varphi\in\Val_i$ restricted to an $i$-dimensional subspace $E\subseteq\R^n$ must be a constant multiple of the volume relative to $E$.
%	
%	Klain~\cite{Klain1995} showed that for every \emph{even} valuation $\varphi\in\Val_i$, there exists a continuous function $\Kl_{\varphi}$ 
	
	Schuster and Wannerer~\cite{Schuster2015} showed that for \emph{smooth valuations} (see \cref{sec:Val_sph}), the Lefschetz operators act on the Klain function by a Radon transform between the different Grassmannians. In the following, $\kappa_k$ denotes the volume of $B^k$ and $\Gr_{n,i}^E\subseteq\Gr_{n,i}$ denotes the submanifold of spaces that are contained in $E$ or that contain $E$, depending on the dimension of~$E$. Integration is with respect to the unique probability measure invariant under rotations fixing $E$.
	
	\begin{thm*}[{\cite{Schuster2015}}] \label{LL_Lambda_Kl}
		Let $1\leq i\leq  n-1$ and $\varphi\in\Val_i$ be smooth and even.
		\begin{enumerate}[label=\upshape(\roman*)]
			\item $\displaystyle \Kl_{\Lambda\varphi}(E)
			= \frac{(n-i+1)\kappa_{n-i+1}}{\kappa_{n-i}} \! \int_{\Gr_{n,i}^E} \!\! \Kl_{\varphi}(F) dF$ for $E\in\Gr_{n,i-1}$, if $i>1$.
			\item $\displaystyle \Kl_{\LL\varphi}(E)
			= \frac{(i+1)\kappa_{n-1}\kappa_{i+1}}{n\kappa_n\kappa_i} \! \int_{\Gr_{n,i}^E} \!\! \Kl_{\varphi}(F) dF$ for $E\in\Gr_{n,i+1}$, if $i<n-1$.
		\end{enumerate}
	\end{thm*}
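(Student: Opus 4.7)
The plan is to reduce the theorem to a pointwise integral geometric identity on Grassmannians, via the Crofton representation of smooth even valuations. By Alesker's irreducibility theorem, every smooth even $\varphi\in\Val_i$ admits a Crofton representation
\[ \varphi(K) = \int_{\Gr_{n,i}} \mathrm{vol}_F(\pr_F K)\, d\nu(F) \]
with a smooth signed measure $\nu$ on $\Gr_{n,i}$, whose Klain function is the (signed) cosine transform
\[ \Kl_\varphi(G) = \int_{\Gr_{n,i}} |\cos(G,F)|\, d\nu(F), \quad G\in\Gr_{n,i}, \]
where $|\cos(G,F)|$ denotes the Jacobian of the orthogonal projection $\restr{\pr_F}{G}$ between two $i$-dimensional subspaces.

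For part (i), I would apply $\Lambda$ under the integral sign. Since $\pr_F(K+tB^n)=\pr_F K + tB_F^i$ (with $B_F^i$ the unit ball in $F$), this yields
\[ (\Lambda\varphi)(K) = \int_{\Gr_{n,i}} S_F(\pr_F K)\, d\nu(F), \]
where $S_F$ denotes the surface area in $F$. Evaluating at $K=B^{i-1}_E\subset E$ for $E\in\Gr_{n,i-1}$, the projection $\pr_F B^{i-1}_E$ is a flat $(i-1)$-dimensional ellipsoid in $F$, so $S_F(\pr_F B^{i-1}_E)=2\,\mathrm{vol}_{i-1}(\pr_F B^{i-1}_E) = 2|J_F(E)|\kappa_{i-1}$, where $|J_F(E)|$ is the Jacobian of $\restr{\pr_F}{E}:E\to F$. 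Dividing by $\kappa_{i-1}$ gives
\[ \Kl_{\Lambda\varphi}(E) = 2\int_{\Gr_{n,i}} |J_F(E)|\, d\nu(F). \]
Substituting the cosine transform formula into the right-hand side of (i) and interchanging integrals via Fubini, it then suffices to verify the pointwise identity
\[ |J_{F_0}(E)| = \frac{(n-i+1)\kappa_{n-i+1}}{2\kappa_{n-i}} \int_{\Gr_{n,i}^E} |\cos(G,F_0)|\, dG \qquad (\ast) \]
for every $F_0\in\Gr_{n,i}$.

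The identity $(\ast)$ is the main obstacle. To prove it, I would parametrize $\Gr_{n,i}^E$ by $G=E\oplus\R v$ with $v$ in the unit sphere $\S^{n-i}\subset E^\perp$ (equipped with the invariant probability measure, under which the parametrization is $2$-to-$1$). The crucial geometric observation is that the line $F_0\ominus \pr_{F_0}(E)$ lies entirely in $E^\perp$: indeed, any $y\in F_0$ orthogonal to $\pr_{F_0}(E)$ within $F_0$ satisfies $\langle y, e\rangle = \langle y, \pr_{F_0} e\rangle = 0$ for every $e\in E$. By multiplicativity of Jacobians along the chain $E\subset G$, the cosine factors as $|\cos(G,F_0)| = |J_{F_0}(E)|\cdot |\langle v, \ell\rangle|$, where $\ell\in E^\perp\cap\S^{n-1}$ is a unit vector spanning $F_0\ominus\pr_{F_0}(E)$. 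Consequently $(\ast)$ reduces to the elementary spherical integral
\[ \int_{\S^{n-i}} |\langle v, \ell\rangle|\, dv = \frac{2\kappa_{n-i}}{(n-i+1)\kappa_{n-i+1}} \]
(with respect to the invariant probability measure).

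Part (ii) is treated by the same scheme: express $\LL\varphi$ via the Crofton representation combined with the integral geometry of intersections with affine hyperplanes, evaluate at $B^{i+1}_E$ for $E\in\Gr_{n,i+1}$, and reduce the resulting formula to a dual integral identity in which $G\in\Gr_{n,i}^E=\{G\subset E\}$ is parametrized as a hyperplane of $E$. The corresponding spherical integral, now involving the sine-type Jacobian dual to the above, produces the normalization $(i+1)\kappa_{n-1}\kappa_{i+1}/(n\kappa_n\kappa_i)$; tracking these constants is the only remaining subtlety.
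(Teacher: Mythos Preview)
This theorem is quoted from \cite{Schuster2015} and not proved directly in the paper; the paper instead proves the more general \cref{Lambda_KS,LL_KS} for Klain--Schneider functions, which specialize to the cited result in the even case. So the relevant comparison is with those proofs.

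Your argument for part~(i) is correct and genuinely different from the paper's. The paper reduces to the dense class of mixed volumes $V({}\cdot{}^{[i]},\CC,f)$, computes their Klain--Schneider functions via the mixed spherical lifting (\cref{mixed_area_meas_lift:intro}), and then applies a Cauchy--Kubota formula for mixed area measures; this machinery is built precisely so that the odd part is handled simultaneously. You instead use the Crofton representation of smooth \emph{even} valuations, reduce to the pointwise cosine identity~$(\ast)$, and verify it by the clean factorization $|\cos(G,F_0)|=|J_{F_0}(E)|\cdot|\langle v,\ell\rangle|$ together with a first-moment spherical integral. Your route is shorter and more elementary for the even case, but it does not extend to odd valuations (there is no Crofton measure on $\Gr_{n,i}$), which is exactly the generality the paper is after. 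One minor point: the existence of a smooth signed Crofton measure for every smooth even $\varphi$ is not an immediate corollary of irreducibility alone---it uses that the cosine transform is an isomorphism of the smooth $\SO(n)$-modules (Alesker--Bernstein); you should cite this rather than irreducibility.

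For part~(ii) your sketch is workable but unnecessarily indirect. Once $K\subset E\in\Gr_{n,i+1}$, the identity $(\LL\varphi)(K)=\frac{\flag{n-1}{i+1}}{\flag{n}{i+2}}\int_{\Gr_{n,i}^E}\Kl_\varphi(F)\,\mathrm{vol}_{i+1}(K)\,dF$ follows directly from the affine Grassmannian change of variables and Fubini, using only that $\varphi|_F=\Kl_\varphi(F)\,\mathrm{vol}_F$; no Crofton measure or ``sine-type Jacobian'' is needed. This is essentially how the paper proceeds in the even case of \cref{LL_KS}, and it gives the constant immediately.
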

	
	In the odd case, Schneider~\cite{Schneider1996} introduced the natural counterpart to the Klain function, called the \emph{Schneider function}, and showed that it determines the respective valuation uniquely.
	We combine these representations for the even and odd part of a valuation $\varphi\in\Val_i$ into one common continuous function on the \emph{flag manifold}
	$\Fl_{n,i+1}=\{(E,u): E\in\Gr_{n,i+1}, u\in\S^i(E)\}$, where $\S^{i}(E)$ denotes the unit sphere in $E$.
	This function, called its \emph{Klain--Schneider function} and denoted by $\KS_{\varphi}$, has the property that for every $E\in\Gr_{n,i+1}$,
	\begin{equation*}
		\varphi(K)
		= \int_{\S^i(E)} \KS_{\varphi}(E,u) ~S_i^E(K,du),
		\qquad K\in\K(E),
	\end{equation*}
	where $S_i^{E}(K,{}\cdot{})$ is the surface area measure of $K$ relative to $E$ (cf. \cite[Section~4.2]{Schneider2014}). This determines $\KS_{\varphi}(E,{}\cdot{})$ only up to the addition of linear functions, so for now, we define it as the corresponding equivalence class.
	By the results of Klain and Schneider, $\varphi$ is uniquely determined by $\KS_{\varphi}$.
	
	With our first main result, we describe the action of the Lefschetz operators on the Klain--Schneider function, generalizing the theorem above.
	Here, ${}\cdot{}|E$ denotes the orthogonal projection onto a linear subspace $E\subseteq\R^n$ and $\pr_{\! E} u = \norm{u|E}^{-1} u|E$ for $u\in\S^{n-1}\setminus E^{\perp}$.
	%As one might expect, Radon type integral transforms between flag manifolds appear.
	
	\begin{thmintro}\label{LL_Lambda_KS:intro}
		Let $1\leq i\leq n-1$ and $\varphi \in\Val_i$.
		\begin{enumerate}[label=\upshape(\roman*)]
			\item \label{Lambda_KS:intro}
			If $i>1$, then for all $(E,u)\in\Fl_{n,i}$,
			\begin{equation*}
				\KS_{\Lambda\varphi}(E,u)
				= \frac{(n-i+1)\kappa_{n-i+1}}{\kappa_{n-i}} \int_{\Gr^{E\cap u^\perp}_{n,i}} \KS_{\varphi}(\mathrm{span}(F\cup u),\pr_{\! F^\perp}u )dF.
			\end{equation*}
			\item \label{LL_KS:intro}
			If $i<n-1$, then for all $(E,u)\in\Fl_{n,i+2}$,
			\begin{equation*}
				\KS_{\LL\varphi}(E,u)
				= \frac{(i+2)\kappa_{n-1}\kappa_{i+2}}{n\kappa_n\kappa_{i+1}} \int_{\Gr_{n,i+1}^E} \!\!\! \KS_{\varphi}(F,\pr_{\! F} u )\norm{u | F} dF.
			\end{equation*}
		\end{enumerate}
	\end{thmintro}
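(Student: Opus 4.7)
My plan is to split the proof by parity. Since both $\Lambda$ and $\LL$ preserve parity, decomposing $\varphi = \varphi^+ + \varphi^-$ allows the even and odd cases to be handled independently.

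For the even part, the main observation is that the Klain--Schneider function of an even valuation is essentially its Klain function: for even $\varphi \in \Val_i$ and $(F, v) \in \Fl_{n, i+1}$,
\[
\KS_{\varphi}(F, v) \equiv \tfrac{1}{2}\Kl_{\varphi}(F \cap v^\perp) \pmod{\text{linear functions}}.
\]
This identity is obtained by evaluating $\varphi$ on an $i$-dimensional body $M \in \K(F \cap v^\perp)$, which is degenerate in $F$: its top surface area measure in $F$ equals $\mathrm{vol}_i(M)(\delta_v + \delta_{-v})$, and equating the two representations of $\varphi(M)$ together with the evenness of $\KS_{\varphi}$ yields the factor $\tfrac{1}{2}$. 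Applying the Schuster--Wannerer theorem to $\Lambda\varphi$ and $\LL\varphi$ and translating via this identity, part (i) of Theorem~A follows by identifying the Grassmannian $\Gr_{n, i}^{E \cap u^\perp}$ in both expressions, while part (ii) follows from the change of variables $F \mapsto F \cap u^\perp$ from $\Gr_{n, i+1}^E$ to $\Gr_{n, i}^{E \cap u^\perp}$, whose Jacobian produces the factor $\norm{u|F}$.

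For the odd part, $\Kl_{\varphi^-}$ vanishes and a direct argument is needed. Using the density of smooth valuations in $\Val$, I may assume $\varphi^-$ is smooth, and hence that it admits a Schneider-type representation $\varphi^-(L) = \int_{\S^{n-1}} f(u)\, dS_i(L, u)$ for $L \in \K(\R^n)$. The Steiner-type expansion of $S_i(K+tB^n, \cdot)$ yields $(\Lambda\varphi^-)(K) = i \int f\, dS(K[i-1], B^n, \cdot)$; for $K \in \K(E)$ with $E \in \Gr_{n, i}$, this mixed area measure can be rewritten via a spherical fibration of $\S^{n-1}$ onto $\S^{i-1}(E)$ as $\int_{\S^{i-1}(E)} \Psi(E, u)\, S_{i-1}^E(K, du)$, where $\Psi$ captures the integration of $f$ over the $E^\perp$-fibers. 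Parametrizing these fibers by $F \in \Gr^{E \cap u^\perp}_{n, i}$ then identifies $\Psi$ with the proposed right-hand side. Part (ii) is handled analogously, using the Crofton integral for $\LL$ and a similar fibration of slicing hyperplanes.

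The main obstacle will be the odd case. Tracking the direction-dependent projections $\pr_{F^\perp} u$ and $\pr_F u$ through the mixed-area-measure expansions and Fubini rearrangements requires careful bookkeeping, as does the derivation of the explicit constants $\frac{(n-i+1)\kappa_{n-i+1}}{\kappa_{n-i}}$ and $\frac{(i+2)\kappa_{n-1}\kappa_{i+2}}{n\kappa_n\kappa_{i+1}}$ from the relevant spherical integrals. Moreover, since $\KS_\varphi$ is defined only modulo linear functions of $u$, one must verify that both sides of Theorem~A are well-defined on this quotient; this reduces to the $\SO$-symmetry of the flag integration domain---fixing $E \cap u^\perp$ in part (i) and $E$ in part (ii)---which sends linear functions to linear functions.
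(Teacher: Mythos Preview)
Your parity split works fine for the even part and for part~(ii), but the odd case of part~(i) has a genuine gap. You write that a smooth odd valuation ``admits a Schneider-type representation $\varphi^-(L) = \int_{\S^{n-1}} f(u)\, dS_i(L, u)$ for $L \in \K(\R^n)$.'' This is false for $2\le i\le n-2$: valuations admitting such a representation against the $i$-th area measure on $\S^{n-1}$ are precisely the \emph{spherical} valuations $\Val_i^{\infty,\mathrm{sph}}$ (see the Schuster--Wannerer characterization cited in the paper), and these form a proper closed subspace of $\Val_i^\infty$. So density of smooth valuations does not let you reduce to this form, and the subsequent fibration argument --- which relies on having a single $f$ on $\S^{n-1}$ rather than a genuine flag function --- never gets off the ground for non-spherical odd valuations.

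The paper avoids the parity split entirely. For~(i) it approximates instead by valuations of the form $V({}\cdot{}^{[i]},\CC,f)$ with smooth reference bodies $\CC$, which span a dense subspace of $\Val_i$ by Alesker irreducibility (containing both even and odd pieces). For these, $\KS_\varphi$ is computed via the mixed spherical lifting $\pi_{E,\CC}^\ast$ of Theorem~B, and the action of $\Lambda$ is traced through a Cauchy--Kubota formula for mixed area measures; this is what produces the $\Gr_{n,i}^{E\cap u^\perp}$ integral and the constant. For~(ii) the paper's argument is in fact close to what you sketch: a direct computation on polytopes $P\in\K(E)$ using the affine-Grassmannian reduction $\AGr_{n,n-1}\to\AGr_{n,i+1}^E$, tracking facet normals under slicing. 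Your even-part reduction to the Schuster--Wannerer theorem is correct and would recover that special case, but the substance of Theorem~A lies precisely in extending beyond it.
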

	
	In fact, our results extend to more general Lefschetz operators: we can describe the Alesker product and the Bernig--Fu convolution with an even valuation of degree and codegree one, respectively (see \cref{rem:generalized_Lambda,rem:generalized_LL}).
	
	We also want to emphasize that our approach is very different from that in~\cite{Schuster2015}.	
	Our proof of~\ref{LL_KS:intro} only uses simple geometric properties of polytopes. For~\ref{Lambda_KS:intro}, we reduce the general case to valuations of the form $\varphi(K)=V(K^{[i]},\CC)$ defined in terms of a mixed volume with a family $\CC=(C_1,\ldots,C_{n-i})$ of fixed reference bodies.
	Indeed, as a consequence of his irreducibility theorem~\cite{Alesker2001}, Alesker proved that such valuations are dense in $\Val_i$, confirming a conjecture by McMullen~\cite{McMullen1980} (this was recently refined by Knoerr~\cite{Knoerr2023} for smooth valuations).
	
	The computation of the Klain--Schneider function of a mixed volume boils down to a relation between mixed area measures and surface are measures relative to a subspace. In order to establish the required relation, we generalize the \emph{spherical projections and liftings} that were introduced by Goodey, Kiderlen, and Weil~\cite{Goodey2011}.
	
	\begin{thmintro}\label{mixed_area_meas_lift:intro}
		Let $0\leq i\leq n-1$, $E\in\Gr_{n,i+1}$, and $\CC=(C_1,\ldots,C_{n-i-1})$ be a family of convex bodies with $C^2$~support functions. Then for all $K\in\K(E)$,
		\begin{equation}\label{eq:mixed_area_meas_lift:intro}
			S(K^{[i]},\CC,{}\cdot{})
			= \frac{1}{\binom{n-1}{i}} \pi_{E,\CC}^\ast S_i^E(K,{}\cdot{}).
		\end{equation}
	\end{thmintro}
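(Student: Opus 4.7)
My plan is to reduce \eqref{eq:mixed_area_meas_lift:intro} to a pointwise identity of spherical densities, obtained via a block decomposition of the mixed discriminant formula along the fibers of the map $\pi_E : \S^{n-1} \setminus E^\perp \to \S^i(E)$, $u \mapsto \pr_{\!E} u$.

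First, by multilinearity of mixed area measures and weak continuity in each argument, it suffices to prove, for $K_1,\ldots,K_i \in \K(E)$ with $C^2$ support functions, the polarized identity
\begin{equation*}
S(K_1,\ldots,K_i,\CC,\,\cdot\,) = \binom{n-1}{i}^{-1} \pi_{E,\CC}^\ast S^E(K_1,\ldots,K_i,\,\cdot\,),
\end{equation*}
and then pass to arbitrary $K_j \in \K(E)$ by approximation. Under $C^2$ regularity of every body involved, both mixed area measures possess smooth spherical densities given by mixed discriminants of the relevant support-function Hessians, so the task becomes a pointwise identity on $\S^{n-1} \setminus E^\perp$.

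The key computation proceeds as follows. Denote by $Q_K(u)$ the symmetric form on $T_u\S^{n-1}$ whose mixed discriminant produces the density of the mixed area measure. Since $K_j \subseteq E$ and $h_{K_j}(u)$ depends on $u$ only through $u|E$, the form $Q_{K_j}(u)$ vanishes identically on the vertical tangent subspace $V_u := T_u \pi_E^{-1}(\pr_{\!E} u)$ of dimension $n-1-i$, and its restriction to the horizontal complement $H_u \cong T_{\pr_{\!E} u}\S^i(E)$ coincides with the corresponding $E$-intrinsic form $Q^E_{K_j}(\pr_{\!E} u)$ up to an explicit power of $\norm{u | E}$ produced by the Jacobian of $\pi_E$. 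Consequently, the $(n-1)$-fold mixed discriminant
\begin{equation*}
D\bigl(Q_{K_1}(u),\ldots,Q_{K_i}(u),Q_{C_1}(u),\ldots,Q_{C_{n-i-1}}(u)\bigr)
\end{equation*}
factors, via the standard identity for mixed discriminants of matrices block-supported with respect to a fixed orthogonal splitting, into
\begin{equation*}
\binom{n-1}{i}^{-1} \det_{H_u}\bigl(Q_{K_1}|_{H_u},\ldots,Q_{K_i}|_{H_u}\bigr) \cdot D_{V_u}\bigl(Q_{C_1}|_{V_u},\ldots,Q_{C_{n-i-1}}|_{V_u}\bigr).
\end{equation*}
The first factor reproduces the density of $S^E(K_1,\ldots,K_i,\,\cdot\,)$ at $\pr_{\!E} u$ after accounting for the Jacobian, while the second factor is precisely the fiber weight that defines the generalized spherical lifting $\pi_{E,\CC}^\ast$.

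The main obstacle will be the bookkeeping of scalar factors: the various powers of $\norm{u|E}$ that arise when transporting $Q_{K_j}$ on $T_u\S^{n-1}$ to $Q^E_{K_j}$ on $T_{\pr_{\!E} u} \S^i(E)$ via $\pi_E$, together with the Jacobian of $\pi_E$ relating the natural measures on $\S^{n-1}$, $\S^i(E)$, and the fiber, must combine to exactly reproduce the weight in the definition of $\pi_{E,\CC}^\ast$. Once the densities agree on $\S^{n-1} \setminus E^\perp$ and both measures are seen to vanish on the null set $E^\perp$ (the left side because $K \subseteq E$ is lower-dimensional transverse to $E^\perp$, the right side by construction), the identity \eqref{eq:mixed_area_meas_lift:intro} follows.
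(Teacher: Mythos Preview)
Your approach is correct and takes a genuinely different route from the paper's proof. The paper first establishes the identity for \emph{polytopes} $P\in\K(E)$ and polytopal reference tuples $\mathcal{Q}$ (\cref{mixed_area_meas_lift_polytopes}), using the discrete representation \eqref{eq:mixed_area_meas_polytopes} of mixed area measures together with the mixed-volume reduction formula (\cref{mixed_vol_reduction}); it then approximates the $C^2$ bodies $\CC$ by polytopes and passes to the limit via the Portmanteau theorem, which in turn requires the absolute-continuity lemma (\cref{mixed_area_meas_abs_cont}) to control the mass on $\S^{n-1}\cap E^\perp$. Your argument instead stays in the smooth category throughout: the mixed-discriminant density formula, the block factorization of mixed discriminants when $i$ of the forms are supported on an $i$-dimensional subspace, and a coarea computation along the fibration $\pi_E$. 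This is more direct given that the $C^2$ hypothesis on $\CC$ is already present, and it bypasses the polytopal detour and Portmanteau entirely; the paper's route, on the other hand, yields the polytopal identity \eqref{eq:mixed_area_meas_lift_polytopes} as a byproduct of independent interest.

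Two points deserve care when you carry this out. First, the scalar relating $Q_{K_j}(u)|_{H_u}$ to $Q^E_{K_j}(\pr_{\!E}u)$ is $\|u|E\|^{-1}$ and comes from the $(-1)$-homogeneity of the Hessian of a $1$-homogeneous function, \emph{not} from the Jacobian of $\pi_E$; the Jacobian contributes a \emph{separate} factor $\|u|E\|^{i}$ in the coarea step, and it is precisely the cancellation of $\|u|E\|^{-i}$ (from the $i$ horizontal forms) against $\|u|E\|^{i}$ (from coarea) that reproduces the weight in $\pi_{E,\CC}^\ast$. Second, the reason the left-hand side assigns no mass to $\S^{n-1}\cap E^\perp$ is not that ``$K\subseteq E$ is lower-dimensional transverse to $E^\perp$'' but simply that, under your $C^2$ assumptions on all bodies, $S(K_1,\ldots,K_i,\CC,\cdot)$ has a continuous density on $\S^{n-1}$ and hence vanishes on the $(n{-}i{-}2)$-dimensional set $\S^{n-1}\cap E^\perp$; the extension to general $K\in\K(E)$ then follows by weak continuity on both sides, using that $\pi_{E,\CC}^\ast$ is weak-$\ast$ continuous.
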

	
	Here, $S(K^{[i]},\CC,{}\cdot{})$ denotes the mixed area measure and $\pi_{E,\CC}^\ast$ denotes the \linebreak \emph{$\CC$-mixed spherical lifting}, which we define in \cref{sec:proj_lift} as a linear operator mapping measures on $\S^i(E)$ to measures on $\S^{n-1}$. The special case where the reference bodies are balls was treated in~\cite{Goodey2011}.

	%%%%%%%%%%%%%%%%%%%%%%%%%%%%%%%%%%%%%%%%%%%%%%%%%%%%%%%%%%%%%%%%%%%%%%%%%%%
	\subsection*{Minkowski valuations}
	%%%%%%%%%%%%%%%%%%%%%%%%%%%%%%%%%%%%%%%%%%%%%%%%%%%%%%%%%%%%%%%%%%%%%%%%%%%

	We now turn to valuations that are convex body valued:
	A \emph{Minkowski valuation} is a map $\Phi:\K(\R^n)\to\K(\R^n)$ such that
	\begin{equation*}
		\Phi(K\cup L) + \Phi(K\cap L)
		= \Phi K + \Phi L
	\end{equation*}
	whenever $K,L,K\cup L\in\K(\R^n)$, where addition on $\K(\R^n)$ is the usual Minkowski addition.
	In the last two decades, starting with the seminal work of Ludwig~\cite{Ludwig2005,Ludwig2002}, considerable advances in the theory of Minkowski valuations have been made,
	including classification results and isoperimetric type inequalities (see, e.g., \cite{Berg2018,OrtegaMoreno2021,OrtegaMoreno2023,Ludwig2006,Parapatits2012,Hofstaetter2023,Brauner2023,Schuster2006,Haberl2012}).
	We denote by $\MVal$ the space of continuous, translation invariant Minkowski valuations that intertwine all rotations and by $\MVal_i$ the subspace of $i$-homogeneous Minkowski valuations. 
	
	Through the correspondence between a convex body $K\in\K(\R^n)$ and its support function $h_K \in C(\S^{n-1})$, $h_K(u)=\max_{x\in K}\langle u,x\rangle$ with the Euclidean inner product $\langle {}\cdot{},{}\cdot{}\rangle$, it is natural to define the Lefschetz operators on the space $\MVal$ by
	\begin{equation*}
		h_{(\Lambda\Phi)(K)}
		= \left. \frac{d}{dt} \right|_{t=0^+} \!\! h_{\Phi(K+tB^n)}
		\qquad\text{and}\qquad
		h_{(\LL\Phi)(K)}
		= \int_{\AGr_{n,n-1}} \!\! h_{\Phi(K\cap H)} ~dH,
	\end{equation*} 
	where these identities are to be understood pointwise. The integration operator~$\LL$ clearly preserves convexity, whereas the non-trivial fact that the derivation operator $\Lambda$ is well-defined on $\MVal$ is due to Parapatits and Schuster~\cite{Parapatits2012}.
	
	Every $\Phi\in\MVal$ is determined by its \emph{associated real valued valuation}, which is the $\SO(n-1)$-invariant valuation $\varphi\in\Val$ defined by $\varphi(K)=h_{\Phi K}(e_n)$, where $e_n$ is the north pole of $\S^{n-1}$. In this sense, Minkowski valuations present a special case of real valued valuations and in addition to the Klain--Schneider function, we have a more specific representation available involving the \emph{spherical convolution} (see \cref{sec:convol}) and the $i$-th order area measure $S_i(K,{}\cdot{})=S(K^{[i]},{B^n}^{[n-i-1]},{}\cdot{})$.
	
	\begin{thmintro*}[{\cite{Schuster2018,Dorrek2017}}] \label{gen_fct:intro}
		Let $1\leq i\leq n-1$ and $\Phi\in\MVal_i$. Then there exists a unique centered, $\SO(n-1)$-invariant function $f_\Phi\in L^1(\S^{n-1})$ such that
		\begin{equation} \label{eq:gen_fct:intro}
			h_{\Phi K}
			= S_i(K,{}\cdot{}) \ast f_\Phi,
			\qquad K\in\K(\R^n).
		\end{equation}
	\end{thmintro*}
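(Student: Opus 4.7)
The plan is to reduce the convolution identity to a zonal integral representation of the associated valuation $\varphi(K) = h_{\Phi K}(e_n)$, and then exploit $\SO(n)$-equivariance. Since $\Phi$ is $\SO(n)$-equivariant, $\varphi \in \Val_i$ is invariant under the stabilizer $\SO(n-1)$ of $e_n$, and $\Phi$ is uniquely recovered from $\varphi$ via $h_{\Phi K}(v) = \varphi(\vartheta^{-1} K)$ for any $\vartheta \in \SO(n)$ with $\vartheta e_n = v$. It therefore suffices to produce a unique zonal centered $f \in L^1(\S^{n-1})$ with $\varphi(K) = \int_{\S^{n-1}} f \, dS_i(K, \cdot)$, since then the $\SO(n)$-equivariance of the $i$-th area measure yields
\begin{equation*}
h_{\Phi K}(v) = \int_{\S^{n-1}} f(\vartheta^{-1} u) \, dS_i(K, du) = (S_i(K, \cdot) \ast f)(v),
\end{equation*}
the last equality being the definition of spherical convolution with a zonal function.

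For the existence of such $f$ I would proceed in two stages. In the smooth case, every $\SO(n-1)$-invariant $\varphi \in \Val_i^\infty$ ought to admit a representation $\varphi(K) = \int g \, dS_i(K, \cdot)$ with $g \in C^\infty(\S^{n-1})^{\SO(n-1)}$: this follows from McMullen's conjecture (proved by Alesker) that mixed volumes span a dense subspace of $\Val_i$, together with the standard polarization identity $n V(K[i], C_1, \ldots, C_{n-i}) = \int h_{C_j} \, dS(K[i], C_1, \ldots, \hat{C}_j, \ldots, C_{n-i}, \cdot)$ and $\SO(n-1)$-averaging, which in the zonal setting effectively reduces the remaining reference bodies to balls. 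In the general continuous case, I would approximate $\Phi$ by smooth Minkowski valuations $\Phi^{(m)}$ via convolution with approximate identities on $\SO(n)$, obtaining smooth generating functions $f_m$, and extract an $L^1$ limit $f_\Phi$.

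Uniqueness is easier: if $\int (f - g) \, dS_i(K, \cdot) = 0$ for all $K$, then $f - g$ lies in the kernel of this integration pairing. Since the first-order moments $\int u \, dS_i(K, du)$ vanish, this kernel contains restrictions of linear functions; among zonal functions it reduces to the line spanned by $u \mapsto \langle e_n, u \rangle$, which is precisely what the centered condition eliminates.

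The main obstacle is the extension from smooth to continuous $\Phi$. While the smooth case produces a $C^\infty$ density through the representation theorem, obtaining $L^1$ (rather than merely distributional) convergence in the limit requires uniform estimates on the $f_m$. The key input will be that each $h_{\Phi^{(m)} K}$ remains a support function, so the convexity constraint built into $\MVal$ provides the necessary $L^1$ bounds; this is where the special structure of Minkowski valuations (as opposed to arbitrary $\Val$-valued maps) becomes essential.
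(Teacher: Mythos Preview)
This theorem is not proved in the paper you were given; it is stated in the introduction as a known result, attributed to Schuster--Wannerer~\cite{Schuster2018} (who obtained a generating \emph{measure}) and Dorrek~\cite{Dorrek2017} (who upgraded it to an $L^1$ function). The paper merely quotes it and uses it. So there is no ``paper's own proof'' to compare your proposal against.

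That said, a few comments on your outline. Your reduction to the associated real-valued valuation $\varphi(K)=h_{\Phi K}(e_n)$ and the uniqueness argument are correct and standard. In the smooth step, however, the sentence ``$\SO(n-1)$-averaging \ldots\ effectively reduces the remaining reference bodies to balls'' hides the real content: one needs to know that an $\SO(n-1)$-invariant smooth valuation of degree $i$ actually lies in the \emph{spherical} subspace $\Val_i^{\infty,\mathrm{sph}}$ (see \cref{gen_fct} in the paper), which is a representation-theoretic fact and is not a consequence of McMullen's conjecture plus averaging alone. The approximation step in the continuous case is where the substance lies: you correctly flag that convexity of $h_{\Phi^{(m)}K}$ is the crucial input, but turning this into a uniform $L^1$ bound on the $f_m$ and extracting a genuine $L^1$ limit (rather than a weak-$\ast$ limit, which would only give a measure or distribution) is the non-trivial heart of Dorrek's argument and would need to be supplied.
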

	
	The function $f_\Phi$ is called the \emph{generating function} of $\Phi$, and by \emph{centered}, we mean that $f_{\Phi}$ is orthogonal to all linear functions.
	The representation above was first established by Schuster and Wannerer~\cite{Schuster2018} for a measure $f_{\Phi}$ and then improved by Dorrek~\cite{Dorrek2017} to $f_{\Phi}\in L^1(\S^{n-1})$. More recently, the first- and third-named author~\cite{Brauner2023} refined this further by showing that~$f_\Phi$ is in fact locally Lipschitz continuous outside the poles $\pm e_n$.
	
	The action of the Lefschetz derivation operator $\Lambda$ on the generating function is a simple multiplication by a constant, which is a direct consequence of the Steiner formula for area measures. For the integration operator $\LL$ however, things are far more intricate: For $\Phi\in\MVal_i$,
	\begin{equation}\label{eq:LL_Lambda_gen_fct:intro}
		f_{\Lambda\Phi}
		= i f_{\Phi}
		\qquad\text{and}\qquad
		f_{\LL\Phi}
		= f_{\Phi} \ast \rho_i
	\end{equation}
	with some unique $\SO(n-1)$-invariant, centered distribution $\rho_i$ on $\S^{n-1}$, as was shown in~\cite{Schuster2018}. Since this is a purely structural statement, describing the action of the Lefschetz integration operator $\LL$ on Minkowski valuations boils down to describing the distribution $\rho_i$. This is the content of our second main result.
	
	\begin{thmintro}\label{rho_description:intro}
		Let $1 \leq i < n-1$. Then $\rho_i$ is an $L^1(\S^{n-1})$ function that is smooth on $\S^{n-1}\setminus \{e_n\}$ and strictly positive up to the addition of a linear function.
		Moreover, the function $\bar\rho_i$, defined by $\rho_i(u) = \bar{\rho}_{i}(\langle e_n, u\rangle)$, satisfies for all $t\in (-1,1)$,
		\begin{equation}\label{eq:rho_ODE:intro}
			(1-t^2)\frac{d^2}{dt^2} \bar\rho_{i}(t) - n t \frac{d}{dt}\bar\rho_{i}(t) - i(n-i-1)\bar\rho_{i}(t)
			= 0.
		\end{equation}
	\end{thmintro}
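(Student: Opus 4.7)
The strategy is to pull back the identity $f_{\LL\Phi} = f_\Phi \ast \rho_i$ to the Klain--Schneider level, apply the Radon-type description of $\LL$ given by Theorem~A\ref{LL_KS:intro}, and read off an explicit integral representation of $\rho_i$ from which all four assertions can be verified.

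First, I would use Theorem~B with reference bodies $\mathcal{C}=(B^n,\ldots,B^n)$ to rewrite the representation $h_{\Phi K}=S_i(K,\cdot)\ast f_\Phi$, for $K\in\K(E)$ and $E\in\Gr_{n,i+1}$, as an integral against $S_i^E(K,\cdot)$. This identifies the Klain--Schneider function of the associated scalar valuation $\varphi(K)=h_{\Phi K}(e_n)$ as an explicit linear transform of the zonal generating function $\bar f_\Phi$. Doing the same one degree up for $\LL\Phi\in\MVal_{i+1}$ and inserting the result into Theorem~A\ref{LL_KS:intro} produces a closed expression for $\bar f_{\LL\Phi}$ in terms of $\bar f_\Phi$. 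Matching this with $f_{\LL\Phi}=f_\Phi\ast\rho_i$ and using $\SO(n-1)$-invariance on both sides then identifies $\bar\rho_i$ as an integral over a sub-Grassmannian, with a non-negative kernel built from the length factor $\|u|F\|$ that appears in Theorem~A\ref{LL_KS:intro}.

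With this explicit integral in hand, the regularity and positivity claims are immediate. The integrand is smooth and strictly positive for $u\in\S^{n-1}\setminus\{\pm e_n\}$, giving $C^\infty$-smoothness on $\S^{n-1}\setminus\{e_n\}$; the length factor produces an integrable singularity at $u=e_n$, giving the $L^1$-bound; and non-negativity of the integrand forces strict positivity of $\rho_i$ modulo linear terms, the ambiguity coming from the centering of $f_{\LL\Phi}$.

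For the ODE, I would parametrize the orbit integral by $t=\langle e_n,u\rangle$ using $\SO(n-1)$-invariance and differentiate twice in $t$ under the integral sign. A direct computation should show that each slice of the integrand is annihilated by the operator $(1-t^2)\partial_t^2-nt\,\partial_t-i(n-i-1)$; this operator is precisely the zonal Laplacian on $\S^n$ shifted by the eigenvalue $i(n-i-1)$, the ``extra'' dimension reflecting the affine-hyperplane integration encoded in $\LL$ and the eigenvalue reflecting the homogeneity degree $i$ of $\Phi$.

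The main obstacle is the identification step. The Klain--Schneider representation is well defined only modulo linear functions, while $f_\Phi$ is \emph{centered}, so this normalization must be tracked carefully through the composition of lifts, restrictions, and Grassmannian integrals; in particular, the length factor $\|u|F\|$ in Theorem~A\ref{LL_KS:intro} is the source of the first-order term $-nt\,\partial_t$ in \eqref{eq:rho_ODE:intro} and has to be handled with some care.
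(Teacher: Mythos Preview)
Your plan has a genuine gap at the identification step, and I do not think it can be closed along the lines you sketch.

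Theorem~A\ref{LL_KS:intro} describes the action of $\LL$ on Klain--Schneider functions, which live on the flag manifold $\Fl_{n,i+2}$, whereas the convolution $f_\Phi\mapsto f_\Phi\ast\rho_i$ takes place on $\S^{n-1}$. You propose to bridge the two via Theorem~B, which gives $\KS_\varphi(E,u)=\tbinom{n-1}{i}^{-1}(\pi_{E,-i}f_\varphi)(u)$. Inserting this into Theorem~A\ref{LL_KS:intro} yields an identity of the form
\[
  (\pi_{E,-(i+1)}\,f_{\LL\varphi})(u)
  = c\!\int_{\Gr_{n,i+1}^E}(\pi_{F,-i}f_\varphi)(\pr_F u)\,\|u|F\|\,dF,
  \qquad (E,u)\in\Fl_{n,i+2}.
\]
To extract an explicit kernel for $\rho_i$ from this, you would have to invert the weighted spherical projection $\pi_{E,-(i+1)}$ on the left. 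This is a Radon-type inversion on a half-sphere with a singular weight, and it does not reduce to a closed-form zonal kernel on $\S^{n-1}$. In particular, I see no mechanism by which the composite ``project, Grassmannian-average, unproject'' would collapse to an integral $\int_{\S^{n-1}}\bar\rho_i(\langle u,v\rangle)f_\varphi(v)\,dv$ with an explicit $\bar\rho_i$. Your heuristic that the length factor $\|u|F\|$ alone produces the first-order term $-nt\,\partial_t$ in the ODE is therefore unsupported: the spherical projections on both sides contribute nontrivially, and there is no reason the ``slices'' of whatever integrand emerges should individually satisfy the Legendre-type equation.

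The paper proceeds quite differently. It evaluates $f_{\LL\varphi}=f_\varphi\ast\rho_i$ on the mean section operators $\tilde{\MSO}_j$, whose generating functions are (up to constants and linear terms) the Berg functions $g_j$ with explicitly known multipliers. Since $\LL\tilde{\MSO}_j$ is again a mean section operator, this determines the multipliers $a_k^n[\rho_i]$ in closed form. From these multipliers one builds a two-parameter family $\rho_{n,i}$ satisfying the recursion $\rho_{n+2,i+1}=\tfrac{1}{2\pi}\rho_{n,i}'$ together with an explicit base case $\rho_{n,n-2}$ expressed through $g_{n-1}$; the ODE is then proved by induction along this recursion. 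Strict positivity is obtained not from any manifest nonnegativity of an integrand, but from the ODE via the strong maximum principle, after pinning down the boundary behaviour at $t=\pm 1$ (including a hypergeometric representation to evaluate $\lim_{t\to -1}\rho_{n,i}(t)$). Smoothness at $-e_n$ likewise comes from showing that $\hat\rho_{n,i}(\theta)=\rho_{n,i}(-\cos\theta)$ extends to an even analytic function of $\theta$, not from an integral formula. None of these ingredients is visible in your outline.
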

	
	In particular, we obtain a representation of $\rho_i$ in terms of a hypergeometric function. The positivity of $\rho_i$ has several notable consequences. Whenever $\Phi$ is \linebreak generated by the support function of a body of revolution, then so is $\LL\Phi$. This answers a question of Schuster \cite{Schuster2023}. Moreover, the combined Lefschetz operators $\Lambda\circ\LL$ and $\LL\circ\Lambda$ act on Minkowski valuations as a composition with some \emph{Minkowski endomorphism}, that is, a Minkowski valuation in $\MVal_1$.
	
	\begin{corintro*}
		Let $1\leq i< n-1$. There exists $\Psi^{(i)}\in\MVal_1$ such that
		\begin{equation*}
			\Lambda( \LL\Phi_i)
			= \Psi^{(i)}\circ\Phi_i \qquad \text{and} \qquad \LL(\Lambda\Phi_{i+1}) = \Psi^{(i)} \circ \Phi_{i+1}
		\end{equation*}
		for every $\Phi_i\in\MVal_i$ and $\Phi_{i+1}\in\MVal_{i+1}$.
	\end{corintro*}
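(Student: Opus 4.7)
The plan is to recognize the action of $\Lambda\circ\LL$ and $\LL\circ\Lambda$ on Minkowski valuations as convolution with a single fixed kernel on the output support function, and then to identify that kernel with a Minkowski endomorphism. For $\Phi_i\in\MVal_i$, iterating \eqref{eq:LL_Lambda_gen_fct:intro} gives $f_{\Lambda\LL\Phi_i} = (i+1)f_{\LL\Phi_i} = (i+1)(f_{\Phi_i} \ast \rho_i)$, so by the generating function representation \eqref{eq:gen_fct:intro} and associativity of the spherical convolution,
\begin{equation*}
    h_{(\Lambda\LL\Phi_i)K} \;=\; (i+1)\, h_{\Phi_i K} \ast \rho_i, \qquad K\in\K(\R^n).
\end{equation*}
The symmetric computation for $\Phi_{i+1}\in\MVal_{i+1}$ yields
\begin{equation*}
    h_{(\LL\Lambda\Phi_{i+1})K} \;=\; (i+1)\, h_{\Phi_{i+1}K} \ast \rho_i, \qquad K\in\K(\R^n).
\end{equation*}
In both cases the output depends on the input valuation only through its value $\Phi_j K$ on $K$, and the same fixed kernel $(i+1)\rho_i$ appears.

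This suggests defining $\Psi^{(i)}\colon\K(\R^n)\to\K(\R^n)$ by $h_{\Psi^{(i)} L} := (i+1)\, h_L \ast \rho_i$. Provided $\Psi^{(i)}$ is well-defined and lies in $\MVal_1$, substituting $L=\Phi_i K$ and $L=\Phi_{i+1}K$ into the identity above immediately produces the two equalities of the corollary; moreover, the composition $\Psi^{(i)}\circ\Phi_j$ is automatically a Minkowski valuation in $K$, since $h_{\Phi_j K}$ is a valuation in $K$ and convolution with a fixed function is linear.

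The main obstacle is therefore to verify that $\Psi^{(i)}\in\MVal_1$, and the only nontrivial point is convexity of the output: continuity, translation invariance, $\SO(n)$-equivariance, and $1$-homogeneity all follow from standard properties of the spherical convolution together with the fact that $\rho_i$ is a centered, $\SO(n-1)$-zonal $L^1$ function, which is part of \cref{rho_description:intro}. For convexity of $\Psi^{(i)} L$, \cref{rho_description:intro} is decisive: it provides a linear function $\ell$ on $\S^{n-1}$ such that $\rho_i-\ell>0$ everywhere. Since convolution with a linear function contributes only a translation, $h_L \ast \rho_i$ is a support function if and only if $h_L \ast (\rho_i-\ell)$ is, and by the classical characterization of Minkowski endomorphisms as convolutions with (weakly) positive zonal densities, this holds for every $L\in\K(\R^n)$. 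Hence $\Psi^{(i)}\in\MVal_1$, and the corollary follows.
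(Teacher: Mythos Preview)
Your proof is correct and follows essentially the same route as the paper: both arguments compute the generating function of the composite operator via \eqref{eq:LL_Lambda_gen_fct:intro}, identify the action as convolution with a scalar multiple of $\rho_i$, and then invoke \cref{rho_description:intro} (weak positivity of $\rho_i$) together with Kiderlen's sufficiency result to conclude that this convolution defines a Minkowski endomorphism. The only cosmetic difference is that you pass to support functions via \eqref{eq:gen_fct:intro} and associativity, whereas the paper stays at the level of generating functions using \eqref{eq:MEnd_gen_fct}; your factor $(i+1)$ is the correct one, since $\LL\Phi_i$ and $\LL(\Lambda\Phi_{i+1})$ have degree $i+1$.
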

	
	Subsequently, the examples of Minkowski valuations that are currently known are preserved under the Lefschetz operators (see \cref{sec:MVali,sec:rho_description}). 
	Moreover, we want to point out that the notion of generating functions and the validity of \cref{eq:LL_Lambda_gen_fct:intro} extends to the much larger class of \emph{spherical valuations} (see \cref{sec:Val_sph}), and thus, so does the scope of \cref{rho_description:intro}.

	%%%%%%%%%%%%%%%%%%%%%%%%%%%%%%%%%%%%%%%%%%%%%%%%%%%%%%%%%%%%%%%%%%%%%%%%%%%
	\subsection*{Organization of the article}
	%%%%%%%%%%%%%%%%%%%%%%%%%%%%%%%%%%%%%%%%%%%%%%%%%%%%%%%%%%%%%%%%%%%%%%%%%%%

	In \cref{sec:proj_lift}, we introduce mixed spherical projections and liftings and prove \cref{mixed_area_meas_lift:intro}. We apply this in \cref{sec:LL_Lambda_KS} to the Klain--Schneider function of scalar valued valuations; in there, we show \cref{LL_Lambda_KS:intro}. Then, in \cref{sec:MSOs} we turn to Minkowski valuations, discussing the structure of the space $\MVal$ and making some key preparations. \cref{sec:LL_gen_fct} is devoted to the action of the Lefschetz operators on generating functions; in there, we prove \cref{rho_description:intro} and its consequences. Finally, in \cref{sec:KS_gen_fct}, we discuss the connection between generating functions and Klain--Schneider functions.

	%%%%%%%%%%%%%%%%%%%%%%%%%%%%%%%%%%%%%%%%%%%%%%%%%%%%%%%%%%%%%%%%%%%%%%%%%%%
	\section{Mixed spherical projections and liftings}
	\label{sec:proj_lift}
	%%%%%%%%%%%%%%%%%%%%%%%%%%%%%%%%%%%%%%%%%%%%%%%%%%%%%%%%%%%%%%%%%%%%%%%%%%%

	In this section, we recall weighted spherical projections and lifting and introduce a mixed version of them with the main goal of proving \cref{mixed_area_meas_lift:intro}.	
	To that end, we need to fix some notation:
	For $E\in\Gr_{n,k}$,
	\begin{equation*}
		\H^{n-k}(E,u)
		= \{ v\in\S^{n-1}\setminus E^\perp: \pr_{\! E }v=u \}
		%= \S^{n-k}(E^\perp \vee u)\cap u^+
	\end{equation*}
	denote the relatively open $(n-k)$-dimensional half-sphere generated by $E^\perp$ and $u\in\S^{k-1}(E)$.
	Note that $\H^{n-k}(E,u)= \S^{n-k}(E^\perp \vee u)\cap u^+$, where we denote by $E'\vee u=\mathrm{span}(E'\cup u)$ the subspace generated by $E'$ and $u\in\S^{n-1}$, and by $u^+=\{x\in\R^n: \langle x,u\rangle>0\}$ the open half-space in the direction of $u\in\S^{n-1}$. Throughout, integration on $j$-dimensional spheres or half-spheres, unless indicated otherwise, is with respect to the $j$-dimensional Hausdorff measure $\mathcal{H}^{j}$.
	
	Now we want to recall the weighted spherical liftings and projections that were introduced by Goodey, Kiderlen, and Weil~\cite{Goodey2011}. 
	These prove to be helpful when it comes to relating the geometry of convex bodies in a subspace to their geometry relative to the ambient space. In the following, $\mathcal{M}(\S^{n-1})$ denotes the space of finite signed measures on $\S^{n-1}$.
	
	\begin{defi}[{\cite{Goodey2011}}]\label{def:sph_proj_lift}
		Let $1\leq k\leq n$, $E\in\Gr_{n,k}$, and $m>-k$. The \emph{$m$-weighted spherical projection} is the bounded linear operator
		\begin{gather*}
			\pi_{E,m} : C(\S^{n-1}) \to C(\S^{k-1}(E)), \\
			(\pi_{E,m}f)(u)
			= \int_{\H^{n-k}(E,u)} f(v) \langle u,v\rangle^{k+m-1} dv,
			\qquad u\in\S^{k-1}(E).
		\end{gather*}
		The \emph{$m$-weighted spherical lifting} is its adjoint operator
		\begin{equation*}
			\pi_{E,m}^\ast: \mathcal{M}(\S^{k-1}(E)) \to \mathcal{M}(\S^{n-1}).
		\end{equation*}
	\end{defi}
	
	If $m>0$, then $(\pi_{E,m}^\ast f)(u) = \norm{u|E}^m f(\pr_{\! E } u)$ for $f\in C(\S^{k-1}(E))$ and $u\in\S^{n-1}$. Hence, $\pi_{E,m}^\ast$ restricts to a bounded linear operator $C(\S^{k-1}(E))\to C(\S^{n-1})$, and by duality, $\pi_{E,m}$ naturally extends to an operator $\mathcal{M}(\S^{n-1})\to\mathcal{M}(\S^{k-1}(E))$.

	Goodey, Kiderlen, and Weil~\cite{Goodey2011} showed the following formula for convex bodies of a subspace, expressing its $i$-th order area measure relative to the ambient space in terms of the one relative to the subspace. As a general reference for mixed area measures and volumes, we cite \cite[Chapters~4 and~5]{Schneider2014}.
	
	\begin{thm}[{\cite[Theorem~6.2]{Goodey2011}}] \label{area_meas_lift}
		Let $1\leq i< k< n$ and $E\in\Gr_{n,k}$. Then for all $K\in\K(E)$,
		\begin{equation} \label{eq:area_meas_lift}
			S_i(K,{}\cdot{})
			= \frac{\binom{k-1}{i}}{\binom{n-1}{i}} \pi_{E,-i}^\ast S_i^E(K,{}\cdot{}).
		\end{equation}
	\end{thm}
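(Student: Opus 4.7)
The plan is to establish \cref{eq:area_meas_lift} first on polytopes by direct computation, then extend by weak approximation. Throughout the argument, the key observation is that for a polytope $P \subseteq E$ of dimension $k$ and an $i$-face $F \in \F_i(P)$ with $i < k$, the normal cones in $\R^n$ and in $E$ are related by $N(P, F) = N_E(P, F) \oplus E^\perp$, so $N(P, F)\cap \S^{n-1}$ decomposes as the spherical join of $N_E(P, F)\cap\S^{k-1}(E)$ (of dimension $k-i-1$) and $\S^{n-k-1}(E^\perp)$ (of dimension $n-k-1$).

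First, I recall the standard polytope representation of area measures (see, e.g., \cite[Section~4.2]{Schneider2014}): for any $g \in C(\S^{n-1})$,
\begin{equation*}
	\int_{\S^{n-1}} g\, dS_i(P,{}\cdot{})
	= \frac{1}{\binom{n-1}{i}} \sum_{F\in\F_i(P)} \mathrm{vol}_i(F) \int_{N(P,F)\cap\S^{n-1}} g\, d\mathcal{H}^{n-i-1},
\end{equation*}
and the analogous formula holds for $S_i^E(P,{}\cdot{})$ on $\S^{k-1}(E)$ with $n$ replaced by $k$ and $N$ replaced by $N_E$. Next, I parametrize the join $N(P,F)\cap\S^{n-1}$ by $(u, w, \theta) \mapsto \cos\theta\, u + \sin\theta\, w$ for $u \in N_E(P,F) \cap \S^{k-1}(E)$, $w\in\S^{n-k-1}(E^\perp)$, and $\theta\in[0,\pi/2]$. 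A direct Jacobian computation yields the measure factorization
\begin{equation*}
	d\mathcal{H}^{n-i-1}(v) = \cos^{k-i-1}\theta\,\sin^{n-k-1}\theta\, d\theta\, du\, dw,
\end{equation*}
and under this parametrization $\langle u, v\rangle = \cos\theta$, so the integrand $\langle u, v\rangle^{k-i-1}$ appearing in \cref{def:sph_proj_lift} (with $m=-i$) is precisely what is needed to absorb the angular weight.

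Combining these, for any $f \in C(\S^{n-1})$, the definition of the adjoint together with the parametrization gives
\begin{equation*}
	\int_{\S^{n-1}} f\, d\bigl(\pi_{E,-i}^\ast S_i^E(P,{}\cdot{})\bigr)
	= \frac{1}{\binom{k-1}{i}} \sum_{F\in\F_i(P)} \mathrm{vol}_i(F) \int_{N(P,F)\cap\S^{n-1}} f\, d\mathcal{H}^{n-i-1},
\end{equation*}
since the inner integral $\int_{N_E(P,F)\cap\S^{k-1}(E)}(\pi_{E,-i}f)(u)\,du$ reassembles to the integral over the spherical join $N(P,F)\cap\S^{n-1}$. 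Comparing with the polytope formula for $S_i(P,{}\cdot{})$ yields $\pi_{E,-i}^\ast S_i^E(P,{}\cdot{}) = \binom{n-1}{i}\binom{k-1}{i}^{-1} S_i(P,{}\cdot{})$, which is the claimed identity.

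For the general case, I approximate $K\in\K(E)$ by polytopes $P_m\subseteq E$ in the Hausdorff metric. Weak continuity of area measures gives $S_i^E(P_m,{}\cdot{}) \to S_i^E(K,{}\cdot{})$ weakly in $\mathcal{M}(\S^{k-1}(E))$ and $S_i(P_m,{}\cdot{})\to S_i(K,{}\cdot{})$ weakly in $\mathcal{M}(\S^{n-1})$; since $\pi_{E,-i}^\ast$ is the adjoint of the bounded operator $\pi_{E,-i}:C(\S^{n-1})\to C(\S^{k-1}(E))$, it is weak-$\ast$ continuous, so the identity passes to the limit. The main obstacle will be the careful bookkeeping of normalization constants in the two polytope formulas and the verification of the join parametrization and its Jacobian; once these are in place, the rest of the argument is essentially formal.
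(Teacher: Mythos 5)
The paper does not prove this result; it is imported from Goodey, Kiderlen, and Weil and used as a black box, so there is no internal proof to compare against. Your argument is nevertheless correct and self-contained. The relation $N(P,F) = N_E(P,F) \oplus E^\perp$, valid for every $i$-face $F$ of a polytope $P \subseteq E$, gives exactly the spherical join decomposition you describe; the Jacobian $\cos^{k-i-1}\theta\,\sin^{n-k-1}\theta$ is the right one (the cosine exponent is $k-i-1$, not $k-1$, because $N_E(P,F)\cap\S^{k-1}(E)$ contributes only $k-i-1$ tangent directions); and this matches the weight $\langle u,v\rangle^{k-i-1}$ in $\pi_{E,-i}$, so the iterated integral reassembles $\int_{N(P,F)\cap\S^{n-1}} f\,d\mathcal{H}^{n-i-1}$ and the comparison of the two polytopal normalizations gives $\binom{k-1}{i}/\binom{n-1}{i}$. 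The limit passage is also unproblematic here: $S_i(P,\cdot)$ never charges $\S^{n-1}\cap E^\perp$, whose Hausdorff dimension $n-k-1$ is strictly below $n-i-1$ --- in contrast to the paper's own polytopal \cref{mixed_area_meas_lift_polytopes}, where the mixed area measure does place mass on $E^\perp$ and must be cut away. More broadly, the paper's route to its companion \cref{mixed_area_meas_lift} is genuinely different (polarization, the discrete polytopal formula for mixed area measures, and Schneider's mixed volume reduction formula), precisely because it must handle arbitrary reference bodies rather than balls; the authors even remark after \cref{mixed_area_meas_lift} that the two methods appear disconnected. Your join parametrization is the natural and more elementary argument for the ball case, but it exploits rotational symmetry that is unavailable in the mixed setting, which is why it does not extend to \cref{mixed_area_meas_lift}.
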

	
	For our purposes, the relevant instance of this is when $i=k-1$ and the measure $S_i^{E}(K,{}\cdot{})$ is the surface area measure of $K$ relative $E$. In this instance, we propose a generalization of \cref{def:sph_proj_lift}. Here and in the following, the notational convention $(L_1,\ldots,L_i) | E = (L_1|E,\ldots,L_i|E)$
	where $L_1,\ldots,L_i\in\K(\R^n)$ and $E\subseteq\R^n$ is a linear subspace, will be used frequently.
	
	\begin{defi}
		Let $1\leq k\leq n$ and $E\in\Gr_{n,k}$. Also, let $C_1,\ldots ,C_{n-k}\in\K(\R^n)$ and set $\CC=(C_1,\ldots,C_{n-k})$. The \emph{$\CC$-mixed spherical projection} is the bounded linear operator
		\begin{gather*}
			\pi_{E,\CC}: C(\S^{n-1}) \to C(\S^{k-1}(E)), \\
			(\pi_{E,\CC}f)(u)
			= \int_{\H^{n-k}(E,u)} f(v) ~S^{E^\perp \vee u}(\CC| (E^\perp \vee u), dv),
			\qquad u\in\S^{k-1}(E).
		\end{gather*}
		The \emph{$\CC$-mixed spherical lifting} is its adjoint operator
		\begin{equation*}
			\pi_{E,\CC}^\ast: \mathcal{M}(\S^{k-1}(E))\to\mathcal{M}(\S^{n-1}).
		\end{equation*} 
	\end{defi}
	
	In the instance where $k=n$ and the family $\CC$ is empty, the above is to be understood as $\pi_{E,\CC}=\Id$ and $\pi_{E,\CC}^\ast =\Id$.	
	Next, as an intermediate step, we prove a polytopal version of \cref{mixed_area_meas_lift:intro}. The following reduction theorem for mixed volumes will be a key ingredient.
	
	\begin{thm}[{\cite[Theorem 5.3.1]{Schneider2014}}]\label{mixed_vol_reduction}
		Let $1\leq k< n$ and $E\in \Gr_{n,k}$. For all convex bodies $K\in\K(E)$ and $C_1,\ldots,C_{n-k}\in \K(\R^n)$,
		\begin{equation}\label{eq:mixed_vol_reduction}
			V(K^{[k]},C_1,\ldots,C_{n-k})
			= \frac{1}{\binom{n}{k}} V_k(K)V^{E^\perp}\!(C_1 | E^\perp,\ldots,C_{n-k} | E^\perp),
		\end{equation}
		where $V^{E^\perp}\!\!$ denotes the mixed volume relative to the subspace $E^{\perp}$.
	\end{thm}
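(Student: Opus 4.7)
The plan is to reduce the claim to the case where the family $\CC$ is packaged into a single auxiliary body, and then to compute with Fubini's theorem combined with the Steiner-type polynomial expansion of the volume. Set $C = \mu_1 C_1 + \cdots + \mu_{n-k} C_{n-k}$ with positive real parameters $\mu_j$. Since both sides of \cref{eq:mixed_vol_reduction} are symmetric and multilinear in $(C_1,\ldots,C_{n-k})$, it suffices to prove the specialized identity
\begin{equation*}
	\binom{n}{k} V(K^{[k]}, C^{[n-k]}) = V_k(K) \cdot \mathrm{vol}_{n-k}(C | E^\perp)
\end{equation*}
and then to compare the coefficients of $\mu_1 \mu_2 \cdots \mu_{n-k}$ on both sides.

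To establish the specialized identity, I would compute $\mathrm{vol}_n(\lambda K + C)$ by integrating along the orthogonal projection $\pi : \R^n \to E^\perp$. The key geometric observation is the fiber formula
\begin{equation*}
	(\lambda K + C) \cap \pi^{-1}(y) = \lambda K + C_y, \qquad C_y = C \cap \pi^{-1}(y),
\end{equation*}
which relies essentially on $\lambda K \subseteq E$ being flat, so that translations by elements of $\lambda K$ preserve the fiber over $y \in E^\perp$. Translating each $C_y$ into $E$, which does not change its $k$-dimensional volume, and applying Fubini then yields
\begin{equation*}
	\mathrm{vol}_n(\lambda K + C) = \int_{C | E^\perp} V_k\!\left(\lambda K + (C_y - y)\right) dy.
\end{equation*}

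Both sides are polynomials in $\lambda$. On the left, the classical Steiner formula produces $\sum_{j=0}^n \binom{n}{j} \lambda^j V(K^{[j]}, C^{[n-j]})$, and since $K$ is at most $k$-dimensional, every term with $j > k$ vanishes automatically. On the right, expanding the inner mixed volume in $E$ and extracting the coefficient of $\lambda^k$ produces $V_k(K)$, which is independent of $y$. Matching the coefficients of $\lambda^k$ on both sides yields the specialized identity. Polarizing in $C$ as described above then delivers \cref{eq:mixed_vol_reduction}. The main technical point is really the fiber identity for the Minkowski sum, which crucially exploits the flatness of $K$; once that is in hand, the rest is polynomial-coefficient bookkeeping.
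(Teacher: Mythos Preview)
The paper does not supply its own proof of this statement: it is quoted verbatim as \cite[Theorem~5.3.1]{Schneider2014} and used as a black box in the subsequent arguments. So there is no in-paper proof to compare against.

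That said, your argument is correct and is essentially the classical one found in Schneider's book. The fiber identity $(\lambda K + C)\cap\pi^{-1}(y) = \lambda K + C_y$ is exactly right because $K\subseteq E = \ker\pi$, so Minkowski translation by $\lambda K$ acts within each fiber. Fubini then gives the integral formula, and matching the $\lambda^k$ coefficient on both sides of the Steiner expansion isolates the term you want (the vanishing of $V(K^{[j]},C^{[n-j]})$ for $j>k$ is automatic since $\dim K\leq k$). The polarization step is clean: both sides of the specialized identity are homogeneous polynomials of degree $n-k$ in $(\mu_1,\ldots,\mu_{n-k})$, and comparing the coefficient of $\mu_1\cdots\mu_{n-k}$ yields the general mixed formula. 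Nothing is missing.
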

	
	\begin{thm}\label{mixed_area_meas_lift_polytopes}
		Let $1\leq i< n-1$, $E\in\Gr_{n,i+1}$, and $\mathcal{Q}=(Q_1,\ldots,Q_{n-i-1})$ be a family of polytopes in $\R^n$. Then for every polytope $P\in\K(E)$,
		\begin{equation}\label{eq:mixed_area_meas_lift_polytopes}
			\mathbbm{1}_{\S^{n-1}\setminus E^\perp} S(P^{[i]},\mathcal{Q},{}\cdot{})
			= \frac{1}{\binom{n-1}{i}}\pi_{E,\mathcal{Q}}^\ast S_i^E(P,{}\cdot{}).
		\end{equation}
	\end{thm}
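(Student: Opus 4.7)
My plan is to verify the claimed identity of measures on $\S^{n-1}$ atom-by-atom. Since $P$ and the $Q_j$ are polytopes, the mixed area measure $S(P^{[i]},\mathcal{Q},\cdot{})$ is a finite sum of Dirac masses, with weight at $v\in\S^{n-1}$ equal to $V^{v^\perp}(F(P,v)^{[i]}, F(Q_1,v),\ldots,F(Q_{n-i-1},v))$, where $F(P,v)$ denotes the face of $P$ with outer normal $v$. Likewise, $S_i^E(P,\cdot{})$ is supported on the finitely many facet normals $u$ of $P$ in $E$, with weight $V_i(F(P,u))$ at each. Unpacking the adjoint, the lifted measure $\pi_{E,\mathcal{Q}}^\ast S_i^E(P,\cdot{})$ is then the discrete measure on $\S^{n-1}\setminus E^\perp$ whose weight at $v$ is $V_i(F(P,u_v))\cdot V^{(E^\perp\vee v)\cap v^\perp}(F(Q_\bullet|(E^\perp\vee v),v))$, where $u_v=\pr_E v$ and the expression vanishes unless $u_v$ is a facet normal of $P$ in $E$. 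The statement therefore reduces to matching these atom weights at each $v\in\S^{n-1}\setminus E^\perp$.

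The core step is to apply the reduction formula \cref{mixed_vol_reduction} in the hyperplane $v^\perp$ to the left-hand side. For $v\in\S^{n-1}\setminus E^\perp$, the identity $\langle v,x\rangle = \norm{v|E}\langle u_v,x\rangle$ for $x\in E$ shows that $F(P,v) = F(P,u_v)$ and that this face is, up to translation, contained in the $i$-dimensional subspace $L := E\cap v^\perp$. Applying \cref{mixed_vol_reduction} in $v^\perp$ with subspace $L$ yields
\begin{equation*}
V^{v^\perp}\!\bigl(F(P,v)^{[i]}, F(Q_\bullet,v)\bigr) = \tfrac{1}{\binom{n-1}{i}}\, V_i(F(P,v))\cdot V^{L^\perp\cap v^\perp}\!\bigl(F(Q_\bullet,v)|(L^\perp\cap v^\perp)\bigr).
\end{equation*}
When $u_v$ is not a facet normal of $P$, then $\dim F(P,v)<i$ so the first factor vanishes, matching the vanishing RHS atom; otherwise $F(P,v)$ is $i$-dimensional in (a translate of) $L$ and $V_i(F(P,v))$ is exactly the mass of $S_i^E(P,\cdot{})$ at $u_v$.

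The remaining ingredient is the linear-algebraic identification $L^\perp\cap v^\perp = (E^\perp\vee v)\cap v^\perp$, which I will deduce by a dimension count: $L^\perp$ contains both $E^\perp$ (since $L\subseteq E$) and $v$ (since $L\subseteq v^\perp$), hence $L^\perp\supseteq E^\perp\vee v$; since both spaces are $(n-i)$-dimensional, equality holds, and intersecting with $v^\perp$ gives the claim. Combined with the standard identity $F(Q_j|(E^\perp\vee v),v) = F(Q_j,v)|(E^\perp\vee v)$ and translation invariance of mixed volumes (using that $F(Q_j,v)\subseteq v^\perp$ up to translation), the two atom weights at $v$ agree, which completes the proof.

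I expect the main technical obstacle to be not the application of \cref{mixed_vol_reduction} itself but the careful bookkeeping of the various subspaces, faces, and projections, and in particular the verification of the identity $L^\perp\cap v^\perp = (E^\perp\vee v)\cap v^\perp$ that bridges the two descriptions. Once this geometric matching is in place, the polytopal theorem reduces to a direct atom-wise application of the reduction formula.
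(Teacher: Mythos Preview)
Your proposal is correct and follows essentially the same route as the paper's proof: both arguments evaluate the mixed area measure of polytopes as a discrete sum of Dirac masses, apply the reduction formula \cref{mixed_vol_reduction} inside the hyperplane $v^\perp$ to the subspace $E\cap v^\perp$, and then identify the resulting orthogonal complement as $(E^\perp\vee v)\cap v^\perp$ to match the definition of $\pi_{E,\mathcal{Q}}$. The only cosmetic difference is that the paper first polarizes to reduce to $Q_1=\cdots=Q_{n-i-1}=Q$ and tests against a continuous function $f$, whereas you work atom-by-atom with the full family~$\mathcal{Q}$; your direct atomwise bookkeeping is equivalent and arguably slightly more transparent.
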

	\begin{proof}
		First, observe that both sides of \cref{eq:mixed_area_meas_lift_polytopes} are multilinear in the reference polytopes $Q_1,\ldots,Q_{n-i-1}$. Thus, by polarization, it suffices to consider the case where $Q_1=\cdots=Q_{n-i-1}=Q$ for some polytope $Q\in\K(\R^n)$.
		
		The mixed area measure of polytopes $P_1,\ldots,P_{n-1}\in\K(\R^n)$ can be written as
		\begin{equation}\label{eq:mixed_area_meas_polytopes}
			S(P_1,\ldots,P_{n-1},{}\cdot{})
			= \sum_{u\in\S^{n-1}} V^{u^\perp}\!(F(P_1,u),\ldots,F(P_{n-1},u))\delta_u,
		\end{equation}
		where $V^{u^\perp}\!$ denotes the mixed volume relative to $u^{\perp}$ and $F(P_j,u)$ denotes the \emph{support set} of $P_j$ with outer unit normal $u\in\S^{n-1}$ (cf.\ \cite[(5.22)]{Schneider2014}). Note that the sum in fact extends only over the outer unit normals of the facets of $P_1+\cdots+P_{n-1}$, and thus, is a finite sum.
		By~\cref{eq:mixed_area_meas_polytopes}, we have that for every $f\in C(\S^{n-1})$,
		\begin{align*}
			&\int_{\S^{n-1}\setminus E^\perp} \! f(u) S(P^{[i]},Q^{[n-i-1]},du)
			= \sum_{u\in\S^{n-1}\setminus E^\perp} \!\! f(u) V^{u^\perp}\!(F(P,u)^{[i]},F(Q,u)^{[n-i-1]}) \\
			&\qquad = \frac{1}{\binom{n-1}{i}} \sum_{u\in\S^{n-1}\setminus E^\perp} f(u) V_i(F(P,u))V_{n-i-1}(F(Q,u)| (E\cap u^\perp)^{\perp_{(u^\perp)}}),
		\end{align*}
		where in the second equality, we applied reduction formula~\cref{eq:mixed_vol_reduction} relative to~$u^\perp$, using the fact that $E\cap u^\perp \in\Gr_{n,i}^{u^\perp}$ for $u\in\S^{n-1}\setminus E^{\perp}$, and denoting by $(E\cap u^\perp)^{\perp_{(u^\perp)}}$ the orthogonal complement of $E\cap u^\perp$ relative to $u^\perp$.
		
		Every $x\in \R^n$ can be written as the orthogonal sum $x=x|(E\cap u^\perp) + x|(E^\perp\vee u)$;
		by choosing $x\in u^\perp$, this yields $x|(E\cap u^\perp)^{\perp_{(u^\perp)}}=x|(E^\perp\vee u)$.
		Thus,		
		\begin{align}\label{eq:mixed_area_meas_lift_polytopes:proof01}
			\begin{split}
				&\int_{\S^{n-1}\setminus E^\perp} \! f(u) S(P^{[i]},Q^{[n-i-1]},du) \\
				&\qquad = \frac{1}{\binom{n-1}{i}} \sum_{u\in\S^{n-1}\setminus E^\perp} f(u) V_i(F(P,u))V_{n-i-1}(F(Q,u)| E^\perp\vee u) \\
				&\qquad = \frac{1}{\binom{n-1}{i}} \sum_{u\in\S^i(E)} \! V_i(F(P,u)) \!\! \sum_{v\in\H^{n-i-1}(E,u)} \!\!\! f(v) V_{n-i-1}(F(Q,v)| (E^\perp \vee u)),
			\end{split}			
		\end{align}
		where in the second equality, we used that for every $v\in \S^{n-1}\setminus E^\perp$, there exists a unique $u\in\S^i(E)$ such that $v\in \H^{n-i-1}(E,u)$; moreover, $F(P,v)=F(P,u)$ and $E^\perp\vee v=E^\perp\vee u$.
		Next, note that an instance of \cref{eq:mixed_area_meas_polytopes},
		\begin{equation}\label{eq:mixed_area_meas_lift_polytopes:proof02}
			(\pi_{E,Q}f)(u)
			= \sum_{v\in\H^{n-i-1}(E,u)} f(v) V_{n-i-1}(F(Q | (E^\perp \vee u),v)).
		\end{equation}
		Note that $F(C,v)|E'=F(C|E',v)$ for every convex body $C\in\K(\R^n)$, subspace $E'\subseteq\R^n$, and direction $v\in E'$. Hence, plugging \cref{eq:mixed_area_meas_lift_polytopes:proof02} into \cref{eq:mixed_area_meas_lift_polytopes:proof01} yields
		\begin{align*}
			&\int_{\S^{n-1}\setminus E^\perp} \! f(u) S(P^{[i]},Q^{[n-i-1]},du)
			= \frac{1}{\binom{n-1}{i}} \sum_{u\in\S^i(E)} (\pi_{E,Q}f)(u) V_i(F(P,u))\\
			&\qquad = \frac{1}{\binom{n-1}{i}} \int_{\S^i(E)} (\pi_{E,Q}f)(u) S_i^E(P,du)
			= \frac{1}{\binom{n-1}{i}} \int_{\S^{n-1}}  f(u) ~[\pi_{E,Q}^{\ast}S_i^E(P,{}\cdot{})](du),
		\end{align*}
		where the second equality is another instance of \cref{eq:mixed_area_meas_polytopes}. Since $f\in C(\S^{n-1})$ was arbitrary, this yields $\mathbbm{1}_{\S^{n-1}\setminus E^\perp}S(P^{[i]},Q^{[n-i-1]},{}\cdot{})=1/\binom{n-1}{i}\pi_{E,Q}^\ast S_i^E(P,{}\cdot{})$. As was noted at the beginning of the proof, this shows the theorem.
	\end{proof}
	
	In the theorem above, we have deliberately avoided the set $\S^{n-1}\cap E^\perp$. This is because in the polytopal case, $S(P^{[i]},\mathcal{Q},{}\cdot{})$ assigns positive mass to this set which can not be captured by the mixed spherical lifting. However, if we replace the polytopes~$Q_j$ by smooth bodies~$C_j$, then this is no longer the case, as the following lemma shows.
	
	\begin{lem}\label{mixed_area_meas_abs_cont}
		Let $0\leq i< n-1$ and $\CC=(C_1,\ldots,C_{n-i-1})$ be a family of convex bodies with $C^2$~support functions. Then for all $K\in\K(\R^n)$, the mixed area measure $S(K^{[i]},\CC,{}\cdot{})$ is absolutely continuous with respect to $\mathcal{H}^{n-i-1}$.
	\end{lem}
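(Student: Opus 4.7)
The approach is to reduce via multilinearity and smoothing to the case of a single $C^2_+$ body, analyze the polytopal case using formulas already at hand, and pass to the limit via a uniform local density estimate on balls.

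First, by the multilinearity of the mixed area measure in each argument together with standard polarization, it suffices to treat $C_1=\cdots=C_{n-i-1}=C$ for a single $C$ with $C^2$ support function. Writing $h_C=h_{C+RB^n}-R\,h_{B^n}$ for $R$ so large that $C+RB^n$ has $C^2_+$ support function, and expanding $S(K^{[i]},h_C^{[n-i-1]},\cdot)$ multilinearly in $h_C$, I may further assume that $h_C$ itself is $C^2_+$.

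Next, for $K=P$ a polytope, I approximate $C$ by polytopes $Q^{(m)}\to C$ in the Hausdorff metric, apply formula \cref{eq:mixed_area_meas_polytopes} combined with the reduction \cref{mixed_vol_reduction} relative to $v^\perp$ for outer normal directions $v$, and pass to the limit $m\to\infty$. This yields the decomposition
\begin{equation*}
S(P^{[i]},C^{[n-i-1]},\cdot)=\binom{n-1}{i}^{-1}\sum_F V_i(F)\,\mathbbm{1}_{N(F,P)\cap\S^{n-1}}\,S^{E_F^\perp}\!\big((C|E_F^\perp)^{[n-i-1]},\cdot\big),
\end{equation*}
where $F$ ranges over the $i$-dimensional faces of $P$, $E_F$ denotes the $i$-dimensional linear subspace parallel to $F$, and $N(F,P)$ is the outer normal cone. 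Since the projection $C|E_F^\perp$ retains a $C^2_+$ support function in $E_F^\perp$, each summand is absolutely continuous with respect to $\mathcal{H}^{n-i-1}$ on the $(n-i-1)$-sphere $\S^{n-1}\cap E_F^\perp$, with fiberwise density bounded by a constant depending only on $n$, $i$, and $\|h_C\|_{C^2}$. Hence $S(P^{[i]},C^{[n-i-1]},\cdot)\ll\mathcal{H}^{n-i-1}$ with uniformly controlled densities.

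For general $K$, I approximate $K$ by polytopes $P_m\to K$; the decomposition together with a careful accounting yields a uniform local estimate $S(P_m^{[i]},C^{[n-i-1]},B(u,r))\leq c(K,C)\,r^{n-i-1}$, valid for every $u\in\S^{n-1}$, $r>0$, and $m\in\N$. Outer regularity of Radon measures preserves this estimate for the weak limit $S(K^{[i]},C^{[n-i-1]},\cdot)$, whence the standard Hausdorff density criterion (cover any $\mathcal{H}^{n-i-1}$-null set by small balls of arbitrarily small total $(n-i-1)$-content) delivers the desired absolute continuity. The main obstacle is the uniform ball estimate: although each summand is fiberwise bounded by $\|h_C\|_{C^2}$, the number of contributing faces proliferates as $P_m\to K$ becomes finer, and one must exploit that the aggregate $\sum_F V_i(F)\,\mathcal{H}^{n-i-1}(N(F,P_m)\cap\S^{n-1})$ stays bounded (being of intrinsic-volume type in $P_m$) to offset this accumulation via a dyadic argument on the angular sizes of the normal cones meeting $B(u,r)$.
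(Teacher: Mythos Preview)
Your decomposition for polytopes is fine, but the final step has a genuine gap. The ``uniform ball estimate'' is neither proved nor clearly true as stated: bounding each fiberwise density by a constant $M$ depending on $\|h_C\|_{C^2}$ gives only
\[
S(P_m^{[i]},C^{[n-i-1]},B(u,r)) \;\leq\; M \binom{n-1}{i}^{-1}\sum_F V_i(F)\,\mathcal{H}^{n-i-1}\big(N(F,P_m)\cap B(u,r)\big),
\]
and your ``dyadic argument on the angular sizes of the normal cones'' is too vague to extract $c(K,C)\,r^{n-i-1}$ from the right-hand side. The naive bound $\mathcal{H}^{n-i-1}(N(F,P_m)\cap B(u,r))\leq c_n r^{n-i-1}$ leaves a factor $\sum_F V_i(F)$, which blows up for fine polytopal approximations of a smooth $K$; and it is not explained how the boundedness of the \emph{global} aggregate $\sum_F V_i(F)\,\mathcal{H}^{n-i-1}(N(F,P_m)\cap\S^{n-1})$ localizes to small balls.

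The repair is immediate once you notice what the bounding measure actually is: by the polytope formula for area measures,
\[
\binom{n-1}{i}^{-1}\sum_{F\in\mathcal{F}_i(P)} V_i(F)\,\mathcal{H}^{n-i-1}\big(N(F,P)\cap\omega\big) \;=\; S_i(P,\omega).
\]
Thus your decomposition already yields the measure inequality $S(P^{[i]},C^{[n-i-1]},\cdot)\leq M\,S_i(P,\cdot)$ for every polytope $P$, which passes to all $K$ by weak continuity (Portmanteau on open sets, then outer regularity). Since $S_i(K,\cdot)\ll\mathcal{H}^{n-i-1}$ is classical (\cite[Theorem~4.5.5]{Schneider2014}), this finishes the proof with no ball estimate whatsoever. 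This is exactly the paper's route: it obtains the same inequality $S(K^{[i]},\CC,\cdot)\leq M\,S_i(K,\cdot)$ in two lines via the mixed-discriminant density formula and monotonicity of mixed discriminants, first for $K$ with $C^2$ support function and then for all $K$ by continuity, bypassing both the $C^2_+$ reduction and the polytope decomposition entirely.
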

	\begin{proof}
		First, note that the $i$-th area measure of $K$ is absolutely continuous with respect to $\mathcal{H}^{n-i-1}$ (cf.\ \cite[Theorem~4.5.5]{Schneider2014}). In order to pass to the mixed area measure, recall that if $L_1,\ldots,L_{n-1}\in\K(\R^n)$ have $C^2$~support functions, then
		\begin{equation*}
			S(L_1,\ldots,L_{n-1},du)
			= \mathsf{D}(D^2h_{L_1}(u),\ldots,D^2h_{L_{n-1}}(u))du,
		\end{equation*}
		where $D^2h_L(u)$ denotes the restriction of the Hessian of the support function $h_L$ (as a one-homogeneous function on $\R^n$) to the hyperplane $u^\perp$ and $\mathsf{D}$ denotes the mixed discriminant (cf.\ \cite[(2.68) and (5.48)]{Schneider2014}). Thus, whenever $K$ also has a $C^2$~support function,
		\begin{align*}
			&S(K^{[i]},C_1,\ldots,C_{n-i-1},du)
			= \mathsf{D}(D^2h_K(u)^{[i]},D^2h_{C_1}(u),\ldots,D^2h_{C_{n-i-1}}(u))du \\
			&\qquad \leq \norm{h_{C_1}}_{C^2}\cdots \norm{h_{C_{n-i-1}}}_{C^2} \mathsf{D}(D^2h_K(u)^{[i]},\Id^{[n-i-1]})du
			= M S_i(K,du),
		\end{align*}
		where we used the monotonicity of mixed discriminants and $M>0$ is a constant depending on $C_1,\ldots,C_{n-i-1}$ but not on $K$.
		The obtained inequality
		\begin{equation*}
			S(K^{[i]},C_1,\ldots,C_{n-i-1},{}\cdot{})
			\leq M S_i(K,{}\cdot{}),
		\end{equation*}
		by continuity of the mixed area measure, extends to all convex bodies $K\in\K(\R^n)$, which concludes the proof.  
	\end{proof}
	
	Now we want to pass from the polytopal to the smooth case. To that end, we need the following formulation of the classical Portmanteau theorem that characterizes weak convergence of measures.

	\begin{thm}[{\cite[Theorem~13.16]{Klenke2020}}]\label{Portmanteau}
		Let $\mu_m,\mu$ be finite positive measures on a compact metric space $X$. Then the following are equivalent:
		\begin{enumerate}[label=\upshape(\alph*)]
			\item \label{Portmanteau:weak_conv}
			$\mu_m \to \mu$ weakly.
			\item \label{Portmanteau:cont_fct}
			For every $f\in C(X)$, we have $\lim_{m\to\infty} \int_X f d\mu_m	= \int_X f d\mu$.
			\item \label{Portmanteau:bounded_fct}
			For every bounded, measurable function $f$ on $X$ such that its discontinuity points are a set of $\mu$-measure zero, $\lim_{m\to\infty} \int_X f d\mu_m	= \int_X f d\mu$.
		\end{enumerate}
	\end{thm}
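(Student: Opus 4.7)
The plan rests on observing that (a) $\Leftrightarrow$ (b) is the standard definition of weak convergence on a compact metric space (where $C(X)$ coincides with the space of bounded continuous functions), and that (c) $\Rightarrow$ (b) is trivial since any $f \in C(X)$ has empty discontinuity set. The substance of the theorem therefore lies in (b) $\Rightarrow$ (c).

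For (b) $\Rightarrow$ (c), fix a bounded measurable $f$ with $\mu(D_f) = 0$. By decomposing into positive and negative parts, adding a constant, and rescaling, I reduce to the case $0 \leq f \leq 1$. I then introduce the upper and lower semicontinuous envelopes
$$\bar{f}(x) = \inf_{\delta > 0} \sup_{d(x,y) < \delta} f(y), \qquad \underline{f}(x) = \sup_{\delta > 0} \inf_{d(x,y) < \delta} f(y),$$
which are Borel measurable and satisfy $\underline{f} \leq f \leq \bar{f}$ pointwise, with equality exactly on $X \setminus D_f$. Since $\mu(D_f) = 0$, this gives $\int \bar{f}\,d\mu = \int f\,d\mu = \int \underline{f}\,d\mu$.

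Next, I approximate the envelopes by continuous functions: in a metric space, the Lipschitz regularizations
$$g_k(x) = \sup_{y \in X} [\bar{f}(y) - k\, d(x,y)], \qquad h_k(x) = \inf_{y \in X} [\underline{f}(y) + k\, d(x,y)]$$
are $k$-Lipschitz, bounded (by compactness of $X$), and satisfy $g_k \downarrow \bar{f}$ and $h_k \uparrow \underline{f}$ pointwise as $k \to \infty$ (this last convergence using precisely that the envelopes are USC respectively LSC). Applying (b) to each $g_k$ and $h_k$ and letting $k \to \infty$ via monotone convergence yields
$$\int f\,d\mu = \int \underline{f}\,d\mu \leq \liminf_{m \to \infty} \int f\,d\mu_m \leq \limsup_{m \to \infty} \int f\,d\mu_m \leq \int \bar{f}\,d\mu = \int f\,d\mu,$$
which establishes (c). The main technical subtlety is the approximation of the semicontinuous envelopes by continuous functions, which relies crucially on the metric structure of $X$; compactness additionally ensures finiteness and boundedness throughout.
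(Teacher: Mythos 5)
The paper does not prove this statement; it is cited verbatim from Klenke's textbook (Theorem~13.16 there) and used as a black box, so there is no in-paper argument to compare against. Your proof is nonetheless correct and self-contained. The reduction to $0\leq f\leq 1$ by shifting and rescaling is legitimate once one notes that (b) applied to $f\equiv 1$ yields $\mu_m(X)\to\mu(X)$, which you implicitly use. The identification of the discontinuity set $D_f$ with the set where the upper and lower envelopes differ, the equalities $\int\bar f\,d\mu=\int f\,d\mu=\int\underline f\,d\mu$ coming from $\mu(D_f)=0$, the monotone Lipschitz regularizations $g_k\downarrow\bar f$ and $h_k\uparrow\underline f$ (where the pointwise convergence uses upper/lower semicontinuity together with boundedness), and the final sandwich via $\liminf$ and $\limsup$ are all sound. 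Klenke's own proof of the Portmanteau theorem runs through a chain of set-based conditions ($\limsup$ on closed sets, $\liminf$ on open sets, convergence on $\mu$-continuity sets); your route via semicontinuous envelopes and their Lipschitz approximations is an equally standard alternative that trades the set formalism for a purely function-analytic argument, and in a compact metric space the two are interchangeable.
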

	
	We are now ready to prove \cref{mixed_area_meas_lift:intro}, which we state here again.
	
	\begin{thm}\label{mixed_area_meas_lift}
		Let $0\leq i< n-1$, $E\in\Gr_{n,i+1}$, and $\CC=(C_1,\ldots,C_{n-i-1})$ be a family of convex bodies with $C^2$~support functions. Then for all $K\in\K(E)$,
		\begin{equation}\label{eq:mixed_area_meas_lift}
			S(K^{[i]},\CC,{}\cdot{})
			= \frac{1}{\binom{n-1}{i}}\pi_{E,\CC}^\ast S_i^E(K,{}\cdot{}).
		\end{equation}
	\end{thm}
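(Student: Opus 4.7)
The plan is to deduce the smooth case from the polytopal identity in Theorem~\ref{mixed_area_meas_lift_polytopes} by a simultaneous approximation of $K$ and of the reference bodies $C_j$, and then to use the absolute continuity granted by Lemma~\ref{mixed_area_meas_abs_cont} to remove the indicator $\mathbbm{1}_{\S^{n-1}\setminus E^\perp}$ in the limit. Concretely, I would pick polytopes $P_m\in\K(E)$ with $P_m\to K$ and, for each $j=1,\ldots,n-i-1$, polytopes $Q_j^{(m)}\in\K(\R^n)$ with $Q_j^{(m)}\to C_j$ in the Hausdorff metric, and set $\mathcal{Q}_m=(Q_1^{(m)},\ldots,Q_{n-i-1}^{(m)})$. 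Theorem~\ref{mixed_area_meas_lift_polytopes} yields for each $m$ the polytopal identity
\begin{equation*}
\mathbbm{1}_{\S^{n-1}\setminus E^\perp}\,S(P_m^{[i]},\mathcal{Q}_m,{}\cdot{})=\tfrac{1}{\binom{n-1}{i}}\,\pi_{E,\mathcal{Q}_m}^{\ast}S_i^{E}(P_m,{}\cdot{}),
\end{equation*}
and the task is to pass to the limit as $m\to\infty$ against an arbitrary test function $f\in C(\S^{n-1})$.

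For the left-hand side, weak continuity of mixed area measures gives $S(P_m^{[i]},\mathcal{Q}_m,{}\cdot{})\to S(K^{[i]},\CC,{}\cdot{})$ weakly. Since $\CC$ consists of bodies with $C^2$ support functions, Lemma~\ref{mixed_area_meas_abs_cont} ensures that the limit is absolutely continuous with respect to $\mathcal{H}^{n-i-1}$; as $\S^{n-1}\cap E^\perp$ is an $(n-i-2)$-dimensional great subsphere and hence $\mathcal{H}^{n-i-1}$-null, the limit charges it no mass. The function $g=f\cdot\mathbbm{1}_{\S^{n-1}\setminus E^\perp}$ is bounded with discontinuity set contained in this null set, so Portmanteau (Theorem~\ref{Portmanteau}~\ref{Portmanteau:bounded_fct}) produces the limit $\int f\,dS(K^{[i]},\CC,{}\cdot{})$. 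For the right-hand side, pairing with $f$ gives $\int_{\S^i(E)}(\pi_{E,\mathcal{Q}_m}f)\,dS_i^{E}(P_m,{}\cdot{})$; weak continuity of the relative surface area measure handles $S_i^{E}(P_m,{}\cdot{})\to S_i^{E}(K,{}\cdot{})$, so what remains is to show that $\pi_{E,\mathcal{Q}_m}f\to\pi_{E,\CC}f$ uniformly on $\S^i(E)$. Pointwise convergence at each $u$ follows from weak continuity of the mixed surface area measure inside $E^\perp\vee u$ applied to $\mathcal{Q}_m|(E^\perp\vee u)\to\CC|(E^\perp\vee u)$, while uniformity in $u$ follows from the joint continuity of projections and mixed area measures together with the compactness of $\S^i(E)$.

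The crux of the argument --- and the step that makes the smoothness hypothesis essential --- is the removal of the indicator on the left-hand side. For polytopes, $S(P_m^{[i]},\mathcal{Q}_m,{}\cdot{})$ genuinely puts mass on $E^\perp\cap\S^{n-1}$, which is why Theorem~\ref{mixed_area_meas_lift_polytopes} can only be stated away from $E^\perp$; it is precisely the absolute continuity of Lemma~\ref{mixed_area_meas_abs_cont}, coupled with Portmanteau, that makes this parasitic mass vanish in the smooth limit. The uniform convergence of $\pi_{E,\mathcal{Q}_m}f$ to $\pi_{E,\CC}f$ in the right-hand side step is technically the most delicate piece, but it is a routine consequence of the standard continuity properties of mixed area measures and of the compactness of $\S^i(E)$.
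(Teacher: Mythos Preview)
Your proposal follows the same overall strategy as the paper---approximate by polytopes, invoke \cref{mixed_area_meas_lift_polytopes}, and use \cref{mixed_area_meas_abs_cont} together with Portmanteau to dispose of the indicator on the left---but the paper organizes the approximation differently, and this difference is worth noting.

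The paper does a \emph{sequential} two-step approximation: first fix a polytope $P\in\K(E)$ and let only the reference polytopes $Q_m\to C$ vary; then, once the identity is established for all polytopes $P$ with smooth reference bodies, pass to general $K$ by continuity of both sides. The payoff of keeping $P$ fixed is that $S_i^E(P,\cdot)$ has \emph{finite support}, so on the right-hand side one only needs \emph{pointwise} convergence $(\pi_{E,Q_m}f)(u)\to(\pi_{E,C}f)(u)$, not uniform convergence. This pointwise convergence already requires a Portmanteau argument of its own---the integral defining $\pi_{E,\mathcal{Q}}f$ is over the \emph{open} half-sphere $\H^{n-i-1}(E,u)$, so one is integrating a discontinuous function against the mixed area measure on $\S^{n-i-1}(E^\perp\vee u)$, and the polytopal measure may charge the boundary $\S^{n-i-2}(E^\perp)$. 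The paper handles this by noting that $\CC|(E^\perp\vee u)$ again has $C^2$ support function, so its surface area measure is absolutely continuous and Portmanteau applies.

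Your simultaneous approximation forces you to prove uniform convergence of $\pi_{E,\mathcal{Q}_m}f$, which you call ``routine''. It is true, but it is not quite routine: you need the same half-sphere Portmanteau step (which you do not mention on the right-hand side), and then a compactness argument. The key observation that makes uniformity work is that for $v\in\S^{n-i-1}(E^\perp\vee u)$ one has $v=v_\perp+tu$ with $v_\perp\in E^\perp$, so $\langle v,u_0\rangle=t\langle u,u_0\rangle$ has the same sign as $\langle v,u\rangle$ whenever $\langle u,u_0\rangle>0$; hence the half-sphere indicator can be locally replaced by the fixed function $\mathbbm{1}_{u_0^+}$, reducing to a single Portmanteau application against the jointly weakly continuous family $(u,\mathcal{L})\mapsto S^{E^\perp\vee u}(\mathcal{L}|(E^\perp\vee u),\cdot)$. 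Your argument goes through once this is spelled out, but the paper's sequential route avoids the issue entirely.
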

	\begin{proof}
		As in the proof of \cref{mixed_area_meas_lift_polytopes}, it suffices to consider the case where \linebreak $C_1=\cdots=C_{n-i-1}=C$ for some body $C\in\K(\R^n)$ with a $C^2$~support function. Next, let $K=P\in\K(E)$ be a polytope and let $Q_m\in\K(\R^n)$ be a sequence of polytopes such that $Q_m\to C$ in the Hausdorff metric. By \cref{mixed_area_meas_lift_polytopes}, for every $f\in C(\S^{n-1})$,		
		\begin{equation*}
			\int_{\S^{n-1}} \! \mathbbm{1}_{\S^{n-1}\setminus E^\perp}\!(u) f(u) S(P^{[i]},Q_m^{[n-i-1]},du)
			= \frac{1}{\binom{n-1}{i}} \int_{\S^i(E)}\! (\pi_{E,Q_m}f)(u) S^E_i(P,du).
		\end{equation*}
		
		Now we want to pass to the limit $m\to \infty$ on both sides. On the left hand side, we have weak convergence of the mixed area measures and according to \cref{mixed_area_meas_abs_cont}, the mixed area measure $S(P^{[i]},C^{[n-i-1]},{}\cdot{})$ vanishes on $\S^{n-1}\cap E^\perp$, as it is of Hausdorff dimension $n-i-2$. Thus, by \cref{Portmanteau},
		\begin{equation*}
			\lim_{m\to\infty}\int_{\S^{n-1}}\! \mathbbm{1}_{\S^{n-1}\setminus E^\perp}\!(u) f(u) S(P^{[i]},Q_m^{[n-i-1]},du)
			= \int_{\S^{n-1}} \! f(u) S(P^{[i]},C^{[n-i-1]},du).
		\end{equation*}
		
		For the limit on the right hand side, note that for every fixed $u\in\S^i(E)$, the projections $Q_m| (E^\perp\vee u)$ converge to $C| (E^\perp\vee u)$. This implies weak convergence of the respective surface area measures relative to $E^\perp\vee u$. The support function of $C| (E^\perp\vee u)$ is just the restriction to~$\S^{n-i-1}(E^\perp\vee u)$ of the support function of $C$, and thus, is again of class~$C^2$. Consequently, the respective surface area measure relative to $E^\perp\vee u$ is absolutely continuous; in particular, it vanishes on $\S^{n-i-1}(E^\perp\vee u)\cap E^\perp$, so \cref{Portmanteau} implies that
		\begin{align*}
			&\lim_{m\to\infty} \int_{\S^{n-i-1}(E^\perp\vee u)} \mathbbm{1}_{\H^{n-i-1}(E,u)}(v) f(v) S_{n-i-1}^{E^\perp \vee u}(Q_m| (E^\perp \vee u),dv)\\
			&\qquad = \int_{\S^{n-i-1}(E^\perp\vee u)} \mathbbm{1}_{\H^{n-i-1}(E,u)}(v) f(v) S_{n-i-1}^{E^\perp \vee u}(C| (E^\perp \vee u),dv).
		\end{align*}
		That is, $\lim_{m\to\infty} \pi_{E,Q_m}f = \pi_{E,C}f$ pointwise on $\S^i(E)$. Since the measure $S_i^{E}(P,{}\cdot{})$ has finite support, passing to the limit $m\to\infty$ yields
		\begin{equation*}
			\int_{\S^{n-1}} f(u) S(P^{[i]},C^{[n-i-1]},du)
			= \frac{1}{\binom{n-1}{i}}\int_{\S^i(E)} (\pi_{E,C}f)(u) S^E_i(P,du).
		\end{equation*}
		Observe that both sides of this identity depend continuously on $P$, and thus, by approximation, we may replace $P$ by any convex body $K\in\K(\R^n)$. Since $f\in C(\S^{n-1})$ was arbitrary, we obtain for every $K\in\K(\R^n)$,
		\begin{equation*}
			S(K^{[i]},C^{[n-i-1]},{}\cdot{})
			= \frac{1}{\binom{n-1}{i}}\pi_{E,C}^\ast S_i^E(K,{}\cdot{}).
		\end{equation*}
		As was noted at the beginning of the proof, this shows the theorem.
	\end{proof}
	
	\begin{rem}
		Comparing \cref{area_meas_lift,mixed_area_meas_lift}, it might be natural to ask for a unification that deals with mixed area measures of the form $S(K^{[i]},{B^n}^{[k-i-1]},\CC,{}\cdot{})$, where $E\in\Gr_{n,k}$, $K\in\K(E)$, and $\CC=(C_1,\ldots,C_{n-k})$. However, the methods in the proofs of \cref{area_meas_lift,mixed_area_meas_lift} seem to be disconnected, and thus, it is not immediately clear to the authors how such a unification would look like and how it could be obtained.
	\end{rem}

	%%%%%%%%%%%%%%%%%%%%%%%%%%%%%%%%%%%%%%%%%%%%%%%%%%%%%%%%%%%%%%%%%%%%%%%%%%%
	\section{Action on the Klain--Schneider function}
	\label{sec:LL_Lambda_KS}
	%%%%%%%%%%%%%%%%%%%%%%%%%%%%%%%%%%%%%%%%%%%%%%%%%%%%%%%%%%%%%%%%%%%%%%%%%%%

	This section is devoted to describing the action of the Lefschetz operators on the Klain--Schneider function, which from now on we require to be centered. First, we introduce the aforementioned Radon type transforms. Then we consider the operator $\Lambda$, where we apply our findings from the previous section; afterwards we turn to the operator~$\LL$.

	%%%%%%%%%%%%%%%%%%%%%%%%%%%%%%%%%%%%%%%%%%%%%%%%%%%%%%%%%%%%%%%%%%%%%%%%%%%	
	\subsection{Two Radon type transforms}
	\label{sec:Radon}
	%%%%%%%%%%%%%%%%%%%%%%%%%%%%%%%%%%%%%%%%%%%%%%%%%%%%%%%%%%%%%%%%%%%%%%%%%%%
	
	Let us now formally introduce the integral transforms that occur in \cref{LL_Lambda_KS:intro}. Throughout, integration on compact Grassmann manifolds, unless indicated otherwise, is with respect to the unique rotationally invariant probability measure.
	
	\begin{defi}
		Let $1\leq k\leq n$.
		\begin{enumerate}[label=\upshape(\roman*)]
			\item If $k>1$, we define $\RT_{k,k-1}: C(\Fl_{n,k})\to C(\Fl_{n,k-1})$ by
			\begin{equation*}
				[\RT_{k,k-1}\zeta](E,u)
				= \int_{\Gr_{n,k-1}^{E\cap u^\perp}} \zeta(F\vee u,\pr_{\! F^\perp} u)~dF,
				\qquad (E,u)\in\Fl_{n,k-1}.
			\end{equation*}
			\item If $k< n$, we define $\RT_{k,k+1}: C(\Fl_{n,k})\to C(\Fl_{n,k+1})$ by
			\begin{gather*}
				[\RT_{k,k+1}\zeta](E,u)
				= \int_{\Gr_{n,k}^{E}} \zeta(F,\pr_{\! F} u)\norm{u|F}~dF,
				\qquad (E,u)\in\Fl_{n,k+1}.
			\end{gather*}
		\end{enumerate}
	\end{defi}
	
	Subsequently, we will need the fact that these transforms map linear functions to linear functions. To that end, recall that the orthogonal projection $\pi_1$ from $L^2(\S^{n-1})$ onto the space of linear functions is given by
	\begin{align*}
		(\pi_1 f)(u)
		= \frac{1}{\omega_n} \int_{\S^{n-1}} \langle u,v\rangle f(v)~dv,
		\qquad u\in\S^{n-1},
	\end{align*}
	where $\omega_k=k\kappa_k$ denotes the surface area of $\S^{k-1}$.
	For $E\in\Gr_{n,k}$, we denote by $\pi_1^{E}$ the respective orthogonal projection relative to~$E$. Moreover, we define an operator $\pi_1^{\langle k\rangle}$ on the space $C(\Fl_{n,k})$ by
	$[\pi_1^{\langle k\rangle }\zeta](E,u)=[\pi_1^{E}\zeta(E,{}\cdot{})](u)$
	for $\zeta\in C(\Fl_{n,k})$ and $(E,u)\in\Fl_{n,k}$. The interplay between the Radon type transforms defined above and linear functions is summarized in the following proposition, which we prove in \cref{sec:proofs}.	
	
	\begin{prop}\label{Radon_linear}
		Let $1\leq k\leq n$.
		\begin{enumerate}[label=\upshape(\roman*)]
			\item \label{Radon_down_linear}
			If $k>1$, then $\RT_{k,k-1}\pi_1^{\langle k\rangle } = \pi_1^{\langle k-1\rangle } \RT_{k,k-1}'$.
			\item \label{Radon_up_linear}
			If $k<n$, then $ \RT_{k,k+1}\pi_1^{\langle k\rangle} =\frac{k+1}{k} \pi_1^{\langle k+1\rangle}\RT_{k,k+1}$.
		\end{enumerate}
	\end{prop}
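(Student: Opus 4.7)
The plan is to prove both identities by direct unfolding of the definitions, followed by a Fubini-type exchange of integrations and a polar-coordinate computation on the unit sphere of the relevant subspace. Part (ii) admits a cleaner treatment, so I would begin there.

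For part (ii), writing $\pi_1^F\zeta(F,\cdot) = \langle a(F),\cdot\rangle$ on $F$ with $a(F)\in F$, the identity $\|u|F\|\cdot\langle a(F),\pr_{F} u\rangle = \langle a(F),u\rangle$ shows that
\[
[\RT_{k,k+1}\pi_1^{\langle k\rangle}\zeta](E,u) = \int_{\Gr_{n,k}^E}\langle a(F),u\rangle\,dF
\]
is visibly linear in $u$, hence equal to its own image under $\pi_1^E$. To compute $[\pi_1^{\langle k+1\rangle}\RT_{k,k+1}\zeta](E,u)$, I would swap the integrations over $\S^k(E)$ and $\Gr_{n,k}^E$ and, for each fixed $F\in\Gr_{n,k}^E$, parameterize $v\in\S^k(E)$ by $v=\beta e_F+\sqrt{1-\beta^2}\,w$ with $e_F\in E\cap F^\perp$ a unit vector and $w\in\S^{k-1}(F)$, so that $dv=(1-\beta^2)^{(k-2)/2}d\beta\,dw$. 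The component of $\langle u,v\rangle$ along $e_F$ integrates to zero by $\beta$-parity, while the remaining term contributes $\int_{-1}^1(1-\beta^2)^{k/2}d\beta = \omega_{k+3}/\omega_{k+2}$ times $\int_{\S^{k-1}(F)}\langle u,w\rangle\zeta(F,w)\,dw$. Using $\omega_m=m\kappa_m$ and the Gamma recursion, the ratio of the prefactors on the two sides collapses to $(k+1)/k$.

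The main obstacle in part (i) is that the domain $\Gr_{n,k-1}^{E\cap u^\perp}$ varies with $u$, so the visible-linearity trick from part (ii) is unavailable. The plan here is to re-parameterize $\Gr_{n,k-1}^{E\cap u^\perp}$ via the bijection $F\leftrightarrow (F',v')=(F\vee u,\pr_{F^\perp}u)$ with inverse $F=F'\cap v'^\perp$; concretely, writing $F=(E\cap u^\perp)\oplus\mathrm{span}(\alpha u+\sqrt{1-\alpha^2}\gamma_0)$ with $\alpha\in[-1,1]$ and $\gamma_0\in\S^{n-k}(E^\perp)$, one finds $F\vee u = E\oplus\mathrm{span}\,\gamma_0\in\Gr_{n,k}^E$ (independent of $\alpha$) and $\pr_{F^\perp}u=\sqrt{1-\alpha^2}\,u-\alpha\gamma_0\in\S^{k-1}(F\vee u)$. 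Expanding both sides in these coordinates, using parity in $\alpha$ (together with the symmetry $\gamma_0\mapsto -\gamma_0$) to discard the odd contributions, and converting the resulting $(\alpha,\gamma_0)$-integration into an integration over $\S^{k-1}(F\vee u)$ by the same polar computation used in part (ii), one reduces both sides to double integrals over $\Gr_{n,k}^E\times\S^{k-1}(F\vee u)$ whose coefficients can then be matched via the beta-function identity $\int_{-1}^1(1-\alpha^2)^{m/2}d\alpha=\omega_{m+3}/\omega_{m+2}$ to yield the claimed relation.
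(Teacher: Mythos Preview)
Your plan is correct and follows essentially the same route as the paper: for (ii) you unfold $\RT_{k,k+1}$ and reduce to the beta-integral $\int_{-1}^1(1-\beta^2)^{k/2}d\beta=\omega_{k+3}/\omega_{k+2}$, and for (i) you reparametrize $\Gr_{n,k-1}^{E\cap u^\perp}$ by $(\alpha,\gamma_0)\in[-1,1]\times\S^{n-k}(E^\perp)$, integrate out $\alpha$, and match both sides as integrals over $\Gr_{n,k}^E$. The paper does the same computation but packages it via the weighted spherical projection/lifting operators $\pi_{E,m}^F$ (its Lemma~C.1 is exactly your reparametrization, and the cited Lemma~C.2 is exactly your beta-integral), so the difference is purely notational.
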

	
	\noindent Here, we also defined an auxiliary transform $\RT_{k,k-1}': C(\Fl_{n,k})\to C(\Fl_{n,k-1})$ by
	\begin{equation*}
		[\RT_{k,k-1}'\zeta](E,u)
		=  \frac{\omega_{n-k+1}\omega_{n-k+3}\omega_{k-1}}{\omega_{n-k+2}^2\omega_k} \! \int_{\Gr^E_{n,k}} \!\! [\pi^F_{E,1}\zeta(F,{}\cdot{})](u) dF,
		\quad (E,u)\in\Fl_{n,k-1},
	\end{equation*}
	where $\pi_{E,1}^{F}: C(\S^{k-1}(F))\to C(\S^{k-2}(E))$ denotes the $1$-weighted spherical projection relative to~$F$.

	%%%%%%%%%%%%%%%%%%%%%%%%%%%%%%%%%%%%%%%%%%%%%%%%%%%%%%%%%%%%%%%%%%%%%%%%%%%
	\subsection{The Lefschetz derivation operator}
	\label{sec:Lambda_KS}
	%%%%%%%%%%%%%%%%%%%%%%%%%%%%%%%%%%%%%%%%%%%%%%%%%%%%%%%%%%%%%%%%%%%%%%%%%%%

	Our strategy to prove \cref{LL_Lambda_KS:intro}~\ref{Lambda_KS:intro} is to reduce the general case to certain mixed volume valuations: For $1\leq i\leq n-1$, consider valuations of the form
	$V({}\cdot{}^{[i]},\CC,f)\in\Val_i$, defined by
	\begin{equation}\label{eq:mixed_vol_val}
		V(K^{[i]},\CC,f)
		= \int_{\S^{n-1}} f(u) ~S(K^{[i]},\CC,du),
		\qquad K\in\K(\R^n),
	\end{equation}	
	where $\CC=(C_1,\ldots,C_{n-i-1})$ is a family of convex bodies with $C^2$~support functions and $f\in C(\S^{n-1})$.
	
	The reduction will require an approximation argument. To that end, recall first that by a classical result of McMullen~\cite{McMullen1980},
	\begin{equation*}
		\Val
		= \bigoplus_{i=0}^n \Val_i.
	\end{equation*}
	The space $\Val_0$ is spanned by the Euler characteristic and, due to a famous characterization by Hadwiger~\cite{Hadwiger1957}, the space $\Val_n$ is spanned by the volume. 
	We endow the space $\Val$ with the norm $\norm{\varphi}=\max\{\abs{\varphi(K)}:K\subseteq B^n\}$.
	It is easy to see that this is a complete norm on each space $\Val_i$. 
	By virtue of the homogeneous decomposition, the space $\Val$ is a Banach space.
	
	Moreover, there is a natural representation of the general linear group $\mathrm{GL}(n)$ on the space $\Val$. For $g\in\mathrm{GL}(n)$ and $\varphi\in\Val$, we set
	\begin{equation*}
		(g\cdot \varphi)(K)
		= \varphi(g^{-1}(K)),
		\qquad K\in\K(\R^n).
	\end{equation*}
	Clearly, each space of valuations with a given degree and parity is $\mathrm{GL}(n)$-invariant. By Alesker's \emph{irreducibility theorem}~\cite{Alesker2001}, these are the building blocks of all closed $\mathrm{GL}(n)$-invariant subspaces of $\Val$:
	
	\begin{thm}[{\cite[Theorem~1.3]{Alesker2001}}]\label{irreducibility}
		For $0\leq i\leq n$, the natural representation of $\mathrm{GL}(n)$ on the spaces $\Val_i^{\mathrm{even}}$ and $\Val_i^{\mathrm{odd}}$ is irreducible.
	\end{thm}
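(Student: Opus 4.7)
The plan is to reduce the irreducibility question to a statement about closed $\mathrm{GL}(n)$-invariant subspaces of function spaces on the Grassmannian (for even valuations) or the flag manifold (for odd valuations), where representation theory of reductive Lie groups becomes applicable. Since the $\mathrm{GL}(n)$-action preserves homogeneity degree and parity, it suffices to treat $\Val_i^{\mathrm{even}}$ and $\Val_i^{\mathrm{odd}}$ separately, and the cases $i=0$ and $i=n$ are trivial.

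For the even case, I would use Klain's injection $\Kl:\Val_i^{\mathrm{even}}\hookrightarrow C(\Gr_{n,i})$. This map is continuous and, once the action on $C(\Gr_{n,i})$ is twisted by the appropriate Jacobian factor, $\mathrm{GL}(n)$-equivariant. Consequently, any closed $\mathrm{GL}(n)$-invariant subspace $W\subseteq\Val_i^{\mathrm{even}}$ corresponds to a closed invariant subspace of $\Kl(\Val_i^{\mathrm{even}})\subseteq C(\Gr_{n,i})$, so it suffices to rule out non-trivial such subspaces in the image. Using the integral representation of valuations by mixed volumes (see \cref{eq:mixed_vol_val}), one can further identify this image with the range of the cosine transform of measures on $\Gr_{n,i}$.

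The structural step is then to pass to the dense subspace $\Val_i^{\mathrm{even},\infty}$ of smooth valuations (those whose $\mathrm{GL}(n)$-orbit map is smooth), which carries the structure of a Casselman--Wallach representation of $\mathrm{GL}(n)$. Its underlying Harish-Chandra module decomposes according to the branching of the spherical principal series $\mathrm{Ind}_P^{\mathrm{GL}(n)}|\cdot|^{\lambda}$ restricted to the maximal compact subgroup $\mathrm{O}(n)$ acting on $\Gr_{n,i}$. The goal is to show that $\Kl(\Val_i^{\mathrm{even},\infty})$ constitutes a single irreducible constituent. To this end, one would compute the $\mathrm{O}(n)$-types appearing in the Klain image by evaluating $\Kl$ on explicit test valuations (for instance mixed volumes $V({}\cdot{}^{[i]},\CC)$, whose Klain functions admit a tractable description via \cref{mixed_area_meas_lift}), and match them with the known $\mathrm{O}(n)$-type decomposition of the relevant irreducible subquotient of $C(\Gr_{n,i})$. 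Irreducibility at the level of the Harish-Chandra module would then extend back to $\Val_i^{\mathrm{even}}$ by continuity and the density of smooth vectors.

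For the odd case, I would use Schneider's embedding, or equivalently the Klain--Schneider function introduced above, to realize $\Val_i^{\mathrm{odd}}$ as a closed $\mathrm{GL}(n)$-invariant subspace of $C^\infty(\Fl_{n,i+1})$, and repeat the argument with the sine transform replacing the cosine transform. The \textbf{main obstacle} in both cases is the final identification of the Klain (respectively Schneider) image with an irreducible constituent rather than a reducible subquotient: at the integer value of the continuous parameter corresponding to the homogeneity degree $i$, the spherical principal series of $\mathrm{GL}(n)$ becomes reducible, and one must carry out a delicate branching computation to verify that the image of the embedding lies entirely inside the irreducible socle. This representation-theoretic analysis — combining Casselman--Wallach theory, explicit $\mathrm{O}(n)$-type calculations on Grassmannians, and analytic continuation of the cosine and sine transforms — constitutes the heart of Alesker's original argument in \cite{Alesker2001}.
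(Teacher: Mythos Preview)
The paper does not prove this statement. \Cref{irreducibility} is quoted from \cite[Theorem~1.3]{Alesker2001} and used as a black box; no argument for it appears anywhere in the text. There is therefore nothing in the paper to compare your proposal against.

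For what it is worth, your sketch is broadly in the spirit of Alesker's original proof: the even case does proceed via the Klain embedding and an identification of its image with the range of the cosine transform, followed by a representation-theoretic analysis of degenerate principal series, and the odd case is handled analogously. Two cautions, though. First, your references to \cref{eq:mixed_vol_val} and \cref{mixed_area_meas_lift} as tools for computing $\mathrm{O}(n)$-types are anachronistic in this context, since the paper invokes irreducibility precisely to obtain density of mixed volumes, not the other way around. Second, the hard part of Alesker's argument is not merely ``matching $\mathrm{O}(n)$-types'' but a genuine analysis (in the original paper via Beilinson--Bernstein localization and $\mathcal{D}$-modules) of the composition series of the relevant induced representation; your final paragraph correctly flags this as the main obstacle but does not resolve it.
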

	
	This result has far-reaching consequences in valuation theory. Especially relevant for our purposes, it is not hard to see that for fixed $1\leq i\leq n-1$, the valuations of the form \cref{eq:mixed_vol_val} span a $\mathrm{GL}(n)$-invariant subspace of $\Val_i$ containing non-trivial even and odd elements.
	Therefore, by \cref{irreducibility}, the linear span of such mixed volumes is dense in $\Val_i$.
	
	As a direct consequence of \cref{mixed_area_meas_lift:intro}, the Klain--Schneider function of a valuation of the form \cref{eq:mixed_vol_val} is given by
	\begin{equation}\label{eq:KS_mixed_vol}
		\KS_{V({{}\cdot{}}^{[i]},\CC,f)}(E,u)
		= \frac{1}{\binom{n-1}{i}}\big[(\Id - \pi_1^E)(\pi_{E,\CC}f)\big](u),
		\qquad (E,u)\in\Fl_{n,i+1}.
	\end{equation}	
	
	In order to describe the action of the Lefschetz derivation operator $\Lambda$ on these Klain--Schneider functions, we need the following Cauchy--Kubota type formula for mixed area measures (cf.\ \cite[(4.78)]{Schneider2014}): For every family $\CC=(C_1,\ldots,C_{n-2})$ of convex bodies in $\R^n$ and all $f\in C(\S^{n-1})$,
	\begin{equation}\label{eq:Kubota_area_meas}		
		\int_{\S^{n-1}} f(u) S(\CC,B,du)
		= \frac{\omega_n}{\omega_{n-1}}\int_{\Gr_{n,n-1}} \int_{\S^{n-2}(H)} f(u) S^H\!(\CC | H, du)~ dH.
	\end{equation}
	
	We are now ready to prove \cref{LL_Lambda_KS:intro}~\ref{Lambda_KS:intro}.
	
	\begin{thm}\label{Lambda_KS}
		Let $1<i\leq n-1$ and $\varphi\in\Val_i$. Then
		\begin{equation}\label{eq:Lambda_KS}
			\KS_{\Lambda\varphi}
			= \frac{(n-i)\omega_{n-i+1}}{\omega_{n-i}} (\Id-\pi_1^{\langle i\rangle})\RT_{i+1,i}\KS_{\varphi}.
		\end{equation}
	\end{thm}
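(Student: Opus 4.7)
The plan is to reduce to the mixed-volume valuations and then carry out an explicit computation using the Cauchy--Kubota formula. Both sides of \eqref{eq:Lambda_KS} depend continuously on $\varphi \in \Val_i$, so by the density argument already outlined above --- based on Alesker's irreducibility theorem (\cref{irreducibility}) --- it suffices to verify \eqref{eq:Lambda_KS} for $\varphi = V({\cdot}^{[i]}, \CC, f)$ as in \eqref{eq:mixed_vol_val}, where $\CC = (C_1, \ldots, C_{n-i-1})$ is a family of convex bodies with $C^2$ support functions and $f \in C(\S^{n-1})$.

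For such $\varphi$, expanding $S((K+tB^n)^{[i]}, \CC, {}\cdot{})$ multilinearly in $t$ and differentiating at $t = 0$ yields $\Lambda\varphi = i\, V({\cdot}^{[i-1]}, \tilde{\CC}, f)$, where $\tilde{\CC} = (B^n, C_1, \ldots, C_{n-i-1})$ is a family of $n - i$ bodies with $C^2$ support functions (as $h_{B^n}(x) = \norm{x}$ is smooth on $\R^n \setminus \{0\}$). Applying \eqref{eq:KS_mixed_vol} to both $\varphi$ and $\Lambda\varphi$ reduces the theorem to relating the weighted spherical projection $\pi_{E, \tilde{\CC}} f$ on $\S^{i-1}(E)$ for $E \in \Gr_{n, i}$ to an integral of $\pi_{F', \CC} f$ on $\S^{i}(F')$ for $F' \in \Gr_{n, i+1}$.

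The heart of the argument is a geometric decomposition obtained by applying the Cauchy--Kubota formula \eqref{eq:Kubota_area_meas} inside the $(n - i + 1)$-dimensional subspace $E^\perp \vee u$. Since $B^n | (E^\perp \vee u)$ is the unit ball of this subspace, \eqref{eq:Kubota_area_meas} rewrites the mixed area measure defining $(\pi_{E, \tilde{\CC}} f)(u)$ as an integral over $(n - i)$-dimensional subspaces $H \subset E^\perp \vee u$. The duality $F \mapsto F^\perp$ identifies the Grassmannian of such $H$ with $\Gr^{E \cap u^\perp}_{n, i}$ (preserving Haar measures), and the linear-algebra identities $(F \vee u)^\perp \vee \pr_{F^\perp} u = F^\perp$ and $\H^{n-i-1}(F \vee u, \pr_{F^\perp} u) = \S^{n-i-1}(F^\perp) \cap u^+$ transform the inner integral into $(\pi_{F \vee u, \CC} f)(\pr_{F^\perp} u)$, yielding
\begin{equation*}
(\pi_{E, \tilde{\CC}} f)(u) = \frac{\omega_{n-i+1}}{\omega_{n-i}} \int_{\Gr^{E \cap u^\perp}_{n, i}} (\pi_{F \vee u, \CC} f)(\pr_{F^\perp} u)\, dF.
\end{equation*}

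Writing $\Xi(F', u') := (\pi_{F', \CC} f)(u')$ on $\Fl_{n, i+1}$, formula \eqref{eq:KS_mixed_vol} reads $\binom{n-1}{i} \KS_\varphi = (\Id - \pi_1^{\langle i+1\rangle}) \Xi$. Substituting the decomposition above into \eqref{eq:KS_mixed_vol} for $\Lambda\varphi$ produces a main term $\frac{i\binom{n-1}{i}}{\binom{n-1}{i-1}} \cdot \frac{\omega_{n-i+1}}{\omega_{n-i}} (\Id - \pi_1^{\langle i\rangle}) [\RT_{i+1, i} \KS_\varphi]$, where $\frac{i\binom{n-1}{i}}{\binom{n-1}{i-1}} = n - i$ matches the constant in \eqref{eq:Lambda_KS}, plus a residual involving $\RT_{i+1, i}\,\pi_1^{\langle i+1\rangle} \Xi$. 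By \cref{Radon_linear}\ref{Radon_down_linear}, the latter equals $\pi_1^{\langle i\rangle} \RT_{i+1, i}' \Xi$, which is annihilated by the outer $(\Id - \pi_1^{\langle i\rangle})$, completing the proof. The main obstacle is the bookkeeping in the Cauchy--Kubota step --- correctly matching half-spheres, subspaces, and measures under the duality $F \leftrightarrow F^\perp$; once those identifications are established, the algebraic assembly and constant-chasing are routine.
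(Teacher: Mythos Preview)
Your proposal is correct and follows essentially the same approach as the paper's proof: reduce by density to mixed-volume valuations via \cref{irreducibility}, compute $\Lambda\varphi$ via the Steiner formula, apply \cref{eq:KS_mixed_vol} on both sides, use the Cauchy--Kubota formula \cref{eq:Kubota_area_meas} in $E^\perp\vee u$ together with the subspace identifications to obtain $\pi_{E,(\CC,B^n)}f = \tfrac{\omega_{n-i+1}}{\omega_{n-i}}\RT_{i+1,i}[\pi_{{}\cdot{},\CC}f]$, and finally invoke \cref{Radon_linear}~\ref{Radon_down_linear} to kill the residual linear term. The paper organizes the computation by introducing the auxiliary functions $\zeta_\varphi$ and $\zeta_{\Lambda\varphi}$ (your $\Xi$ up to the binomial coefficient), but the substance and the constant-chasing are identical.
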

	\begin{proof}
		As a consequence of \cref{irreducibility}, valuations of the form
		$\varphi=V({{}\cdot{}}^{[i]},\CC,f)$, where $\CC=(C_1,\ldots,C_{n-i-1})$ is a family of convex bodies with $C^2$~support functions and $f\in C(\S^{n-1})$, span a dense subspace of $\Val_i$.
		Hence, by linearity and by continuity of the Klain--Schneider map and the operator~$\Lambda$, it suffices to consider these valuations.
		
		Due to the Steiner formula for mixed area measures, $\Lambda\varphi= iV({}\cdot{}^{[i-1]},\CC,B^n,f)$. Hence, by \cref{eq:KS_mixed_vol}, we have that $\KS_{\varphi}	= (\Id - \pi_1^{\langle i+1\rangle})\zeta_{\varphi}$ and $\KS_{\Lambda\varphi}= (\Id - \pi_1^{\langle i\rangle})\zeta_{\Lambda\varphi}$, where we defined 
		\begin{equation*}
			\zeta_{\varphi}(F,v)
			=\frac{1}{\binom{n-1}{i}}(\pi_{F,\CC}f)(v)
			\qquad\text{and}\qquad
			\zeta_{\Lambda\varphi}(E,u)
			= \frac{i}{\binom{n-1}{i-1}} (\pi_{E,(\CC,B^n)}f)(u)
		\end{equation*}
		for $(F,v)\in\Fl_{n,i+1}$ and $(E,u)\in\Fl_{n,i}$.		
		By applying the definition of $\pi_{E,(\CC,B^n)}$ and an instance of \cref{eq:Kubota_area_meas} relative to the space $E^\perp\vee u$, we obtain
		\begin{align*}
			& \zeta_{\Lambda\varphi}(E,u)
			= \frac{i}{\binom{n-1}{i-1}} \int_{\H^{n-i}(E,u)} \! f(v) S^{E^\perp\vee u}(\CC | (E^\perp\vee u), B^n|(E^\perp\vee u),dv) \\
			&\qquad = \frac{i}{\binom{n-1}{i-1}} \frac{\omega_{n-i+1}}{\omega_{n-i}} \int_{\Gr^{E\cap u^\perp}_{n,i}} \int_{\H^{n-i-1}(F\vee u,\pr_{\! F^\perp}u)} \! f(v) S^{F^\perp}\!(\CC | F^\perp,dv)~ dF,
		\end{align*}
		where we used that whenever $F\in \Gr^{E\cap u^\perp}_{n,i}$, then $(\CC|(E^\perp\vee u))|F^{\perp} = \CC|F^{\perp}$ and
		\begin{equation*}
			\H^{n-i}(E,u)\cap F^\perp
			= \H^{n-i-1}(F\vee u,\pr_{\! F^\perp}u).
		\end{equation*}
		Moreover, since $F^\perp=(F\vee u)^{\perp}\vee \pr_{\! F^\perp} u$ for almost all $F\in \Gr^{E\cap u^\perp}_{n,i}$,
		applying again the definition of the mixed spherical projection yields
		\begin{align*}
			& \zeta_{\Lambda\varphi}(E,u)
			=  \frac{i}{\binom{n-1}{i-1}} \frac{\omega_{n-i+1}}{\omega_{n-i}}  \int_{\Gr^{E\cap u^\perp}_{n,i}} \! (\pi_{F\vee u,\CC}f)(\pr_{\! F^\perp} u) dF \\
			&\qquad	= \frac{(n-i)\omega_{n-i+1}}{\omega_{n-i}} [\RT_{i+1,i}\zeta_{\varphi}](E,u).
		\end{align*}
		Finally, the linear components remain to be eliminated. Due to \cref{Radon_linear}~\ref{Radon_down_linear},
		\begin{equation*}
			\RT_{i+1,i}\KS_{\varphi}
			= \RT_{i+1,i}\zeta_{\varphi} - \RT_{i+1,i}\pi_1^{\langle i+1\rangle}\zeta_{\varphi}
			= \frac{\omega_{n-i}}{(n-i)\omega_{n-i+1}} \zeta_{\Lambda\varphi} - \pi_1^{\langle i\rangle}\RT_{i+1,i}' \zeta_{\varphi}.
		\end{equation*}
		By applying $\Id-\pi_1^{\langle i\rangle}$ to both sides of this equation, we obtain \cref{eq:Lambda_KS}.
	\end{proof}
	
	\begin{rem}\label{rem:generalized_Lambda}
		For an even measure $\mu\in\mathcal{M}(\S^{n-1})$, we define a generalized Lefschetz operator on $\Val$ by
		\begin{equation*}
			(\Lambda_\mu \varphi)(K)
			= \left.\frac{d}{dt}\right|_{t=0^+} \varphi(K+tZ_\mu) 
		\end{equation*}
		where $Z_\mu$ is the zonoid generated by the measure $\mu$ (cf.\ \cite[p.~193]{Schneider2014}). It is not too difficult to generalize the Cauchy--Kubota type formula~\cref{eq:Kubota_area_meas} to zonoids:
		\begin{equation*}		
			\int_{\S^{n-1}}f(v)S(\CC,Z_\mu,dv)
			= \frac{2}{n-1}\int_{\S^{n-1}} \int_{\S^{n-2}(u^\perp)} f(v) S^{u^\perp}\!(\CC | u^\perp, dv)~ \mu(du).
		\end{equation*}
		Note also that $Z_\mu | E' = Z_{\pi_{E',1}\mu}$ for every subspace $E'\subseteq \R^n$.  We introduce a generalized transform $\RT_{i+1,i}^{\mu}: C(\Fl_{n,i+1})\to C(\Fl_{n,i})$ by
		\begin{equation*}
			[\RT_{i+1,i}^{\mu}\zeta](E,u)
			= \int_{\Gr^{E\cap u^\perp}_{n,i}} \! \zeta(F\vee u,\pr_{\! F^\perp} u) ~[J^{(E,u)}_\ast(\pi_{E^\perp\vee u,1}\mu)](dF),
			\quad (E,u)\in \Fl_{n,i},
		\end{equation*}
		where we defined the map $J^{(E,u)}:\S^{n-i}(E^\perp\vee u)\to\Gr_{n,i}^{E\cap u^\perp}: w\mapsto (E\cap u^\perp)\vee w$. Then, by carrying out the same argument as in the proof of \cref{Lambda_KS}, we obtain that $\KS_{\Lambda_{\mu}\varphi}(E,u)
		= 2 (\Id-\pi_1^{\langle i\rangle})\RT_{i+1,i}^{\mu}\KS_{\varphi}$.
		
		This has the following application: Every even, smooth valuation $\psi\in\Val_{n-1}$ (see \cref{sec:Val_sph}) admits a representation
		\begin{equation*}
			\psi(K)
			= \int_{\S^{n-1}} V_{n-1}(K| u^\perp) \mu(du),
			\qquad K\in\K(\R^n),
		\end{equation*}		
		where $\mu$ is a signed measure on $\S^{n-1}$ with a smooth density, called \emph{Crofton measure} of $\psi$. Then for every smooth valuation $\varphi\in\Val$, its \emph{Bernig--Fu convolution} (see~\cite{Alesker2017}) with $\psi$ is given by $\varphi\ast\psi= (\Lambda_{\mu^+}-\Lambda_{\mu^-})\varphi$,
		where $\mu=\mu^+ - \mu^-$ is the Jordan decomposition of $\mu$. In this sense, we obtain a description of the Bernig--Fu convolution of a smooth valuation with an even, smooth valuation of codegree one.
	\end{rem}

	\subsection{The Lefschetz integration operator}
	\label{sec:LL_KS}
	%%%%%%%%%%%%%%%%%%%%%%%%%%%%%%%%%%%%%%%%%%%%%%%%%%%%%%%%%%%%%%%%%%%%%%%%%%%

	We now turn to the action of the operator~$\LL$ on the Klain--Schneider function, proving \cref{LL_Lambda_KS:intro}~\ref{LL_KS:intro}. We want to stress that the proof utilizes only basic tools from convex and integral geometry.
	
	On affine Grassmann manifolds, there exists a positive rigid motion invariant Radon measure which is unique up to normalization. For $0\leq k\leq n$, we fix this normalization by setting
	\begin{equation*}
		\int_{\AGr_{n,k}} f(E)dE
		= \int_{\Gr_{n,k}} \int_{E'^\perp} f(E'+x)dx~dE'
%		\int_{\AGr_{n,k}} \chi(B^n\cap E)~dE
%		= \kappa_{n-k}
	\end{equation*}
	for $f\in C_c(\AGr_{n,k})$. Below, we will need the following change of variables rule for affine Grassmannians:
	If $0\leq j\leq k\leq n$ and $E\in\AGr_{n,k}$, then
	\begin{equation}\label{eq:AGr_reparametrizaton}
		\int_{\AGr_{n,n-j}} \!\! f(E\cap F) dF
		= \frac{\flag{n-j}{k-j}}{\flag{n}{k}} \int_{\AGr_{n,k-j}^E} \!\! f(F)dF
	\end{equation}
	for all $f\in C_c(\AGr_{n,k-j}^E)$, where $\AGr_{n,k-j}^E$ denotes the Grassmann manifold of affine $(k-j)$-dimensional affine subspaces of~$E$ and $\flag{n}{k}=\binom{n}{k}\frac{\kappa_n}{\kappa_k\kappa_{n-k}}$, the \emph{flag coefficient}.
	
	Now we prove \cref{LL_Lambda_KS:intro}~\ref{LL_KS:intro}.
	
	\begin{thm}\label{LL_KS}
		Let $1\leq i< n-1$ and $\varphi\in\Val_i$. Then
		\begin{equation}\label{eq:LL_KS}
			\KS_{\LL\varphi}
			= \frac{\flag{n-1}{i+1}}{\flag{n}{i+2}} \RT_{i+1,i+2}\KS_{\varphi}.
		\end{equation}
	\end{thm}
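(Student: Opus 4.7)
The plan is to compute $(\LL\varphi)(K)$ directly for $K\in\K(E)$ with $E\in\Gr_{n,i+2}$ and to match the result against the representation by $S_{i+1}^E(K,{}\cdot{})$ that defines $\KS_{\LL\varphi}$. Since $K\subseteq E$ implies $K\cap H=K\cap(E\cap H)$ for every $H\in\AGr_{n,n-1}$, the reparametrization \cref{eq:AGr_reparametrizaton} with $j=1$ and $k=i+2$ yields
\[(\LL\varphi)(K) = \frac{\flag{n-1}{i+1}}{\flag{n}{i+2}}\int_{\AGr^E_{n,i+1}}\varphi(K\cap F)\,dF.\]
Next I would parametrize $\AGr^E_{n,i+1}$ in Fubini fashion as $\Gr^E_{n,i+1}\times\R$: for each $F'\in\Gr^E_{n,i+1}$ choose a unit normal $u(F')\in\S^{i+1}(E)$ of $F'$ inside $E$, and write the affine hyperplanes of $E$ parallel to $F'$ as $F'+tu(F')$. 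Setting $K_t=(K\cap(F'+tu(F')))-tu(F')\in\K(F')$, translation invariance of $\varphi$ together with the Klain--Schneider representation inside $F'$ produces
\[\int_\R\varphi(K\cap(F'+tu(F')))\,dt = \int_{\S^i(F')}\KS_\varphi(F',v)\,\mu_{F'}(dv),\qquad \mu_{F'}=\int_\R S_i^{F'}(K_t,{}\cdot{})\,dt.\]

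The crux is to identify $\mu_{F'}$ as the pushforward of $\norm{w|F'}\,S_{i+1}^E(K,dw)$ under $w\mapsto \pr_{\! F'}w$: for every $g\in C(\S^i(F'))$,
\[\int_{\S^i(F')} g(v)\,\mu_{F'}(dv) = \int_{\S^{i+1}(E)} g(\pr_{\! F'}w)\,\norm{w|F'}\,S_{i+1}^E(K,dw).\]
I would prove this first for polytopes, where by \cref{eq:mixed_area_meas_polytopes} it reduces to a facet-by-facet statement: for any $(i+1)$-dimensional facet $G$ of $K$ with outer unit normal $w\in\S^{i+1}(E)\setminus\{\pm u(F')\}$,
\[\int_\R V_i\bigl(G\cap(F'+tu(F'))\bigr)\,dt = \norm{w|F'}\,V_{i+1}(G).\]
This is the co-area formula applied to the affine function $\phi(x)=\langle x,u(F')\rangle$ on the affine hull of $G$, using that $|\nabla\phi|=\norm{w|F'}$ there; the (at most) two normals $w\in\S^{i+1}(E)$ projecting to the same $v=\pr_{\! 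F'}w$ are summed automatically on the right, and facets with $w=\pm u(F')$ contribute nothing because $\norm{u(F')|F'}=0$. Weak continuity of mixed area measures then extends the identity to arbitrary $K\in\K(E)$.

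Substituting back and interchanging the $F'$- and $w$-integrations, the definition of $\RT_{i+1,i+2}$ assembles into
\[(\LL\varphi)(K) = \int_{\S^{i+1}(E)}\frac{\flag{n-1}{i+1}}{\flag{n}{i+2}}[\RT_{i+1,i+2}\KS_\varphi](E,w)\,S_{i+1}^E(K,dw).\]
Both sides of the desired equation are centered functions on $\Fl_{n,i+2}$---the left by convention, the right by \cref{Radon_linear}~\ref{Radon_up_linear} applied to the centered $\KS_\varphi$---and both represent $\LL\varphi$ on $\K(E)$ via integration against $S_{i+1}^E(K,{}\cdot{})$; uniqueness of the centered Klain--Schneider representative forces equality. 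The main obstacle I foresee is the slicing identification: the careful bookkeeping of the two normals in $\S^{i+1}(E)$ sharing a projection to $\S^i(F')$, and the measure-theoretic passage from polytopes to general convex bodies, both need attention, but neither is conceptually deep.
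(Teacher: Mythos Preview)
Your proposal is correct and follows essentially the same route as the paper: reduce to $K\subseteq E$ via \cref{eq:AGr_reparametrizaton}, split the affine integral over $\Gr^E_{n,i+1}\times\R$, apply the Klain--Schneider representation inside each $F'$, use the Fubini/co-area identity $\int_\R V_i(G\cap(F'+tu(F')))\,dt=\norm{w|F'}V_{i+1}(G)$ facet by facet for polytopes, and finish with \cref{Radon_linear}~\ref{Radon_up_linear} for centeredness. The only cosmetic difference is that the paper works directly with a fixed polytope $P\in\K(E)$ and its explicit facet decomposition (which already determines $\KS_{\LL\varphi}$), rather than packaging the slice measures into your $\mu_{F'}$ and passing to general $K$ by weak continuity; your bookkeeping worry about two normals sharing a projection is automatically absorbed in the facet sum and is a non-issue in both versions.
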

	\begin{proof}
		First, we fix a subspace $E\in\Gr_{n,i+2}$. We will show \cref{eq:LL_KS} by evaluating $\LL\varphi$ on arbitrary polytopes $P\in\K(E)$. By the definition of the Lefschetz integration operator~$\LL$ and~\cref{eq:AGr_reparametrizaton},
		\begin{align}\label{eq:LL_KS:proof}
			\begin{split}
				&(\LL\varphi)(P)
				= \int_{\AGr_{n,n-1}} \!\! \varphi(P\cap (E\cap H)) dH
				= \frac{\flag{n-1}{i+1}}{\flag{n}{i+2}} \int_{\AGr_{n,i+1}^{E}} \!\! \varphi(P\cap F) dF \\
				&\qquad = \frac{\flag{n-1}{i+1}}{\flag{n}{i+2}} \int_{\Gr_{n,i+1}^E} \int_{E\cap H'^\perp} \varphi(P\cap(H'+x)) dx ~dH'.
			\end{split}
		\end{align}
		Next, denote by $F_j$ the facets of $P$ relative to $E$ with corresponding outer unit normals $u_j \in\S^{i+1}(E)$, where $j\in\{1,\ldots,N\}$. Observe that for $H'\in\Gr_{n,i+1}^E$ and $x\in E\cap H'^\perp$, the outer unit normals and facets of $P \cap (H'+x)$ relative to $H'+x$ are given by the vectors $\pr_{\! H'} u_j$ and sets $F_j\cap (H'+x)$ whenever they are $i$-dimensional. Thus, by the translation invariance of $\varphi$,
		\begin{align*}
			&\varphi(P\cap (H'+x))
			= \int_{\S^i(H')} \KS_\varphi(H',u) S_i^{H'}((P-x)\cap H',du)\\
			&\qquad = \sum_{j=1}^{N} \KS_\varphi(H',\pr_{\! H'}u_j) V_i(F_j \cap (H'+x)).
		\end{align*}
		Note that for every $u\in\S^{i+1}(E)$ and convex body $L \in\K(E\cap u^\perp)$,
		\begin{equation*}
			\int_{E\cap H'^\perp} V_i(L \cap (H'+x)) dx
			= \norm{u|H'}V_{i+1}(L),
		\end{equation*}
		as follows from Fubini's theorem.
		By combining these identities, it follows that
		\begin{align*}
			& \int_{E\cap H'^\perp} \varphi(P\cap(H'+x)) dx
			= \sum_{j=1}^{N} \KS_\varphi(H',\pr_{\! H'}u_j) \norm{u_j|H'}V_{i+1}(F_j) \\
			&\qquad = \int_{\S^{i+1}(E)} \KS_{\varphi}(H',\pr_{\! H'} u) \norm{u|H'} S^E_{i+1}(P,du).
		\end{align*}
		Plugging this into \cref{eq:LL_KS:proof} and changing the order of integration yields
		\begin{equation*}
			(\LL\varphi)(P)
			= \frac{\flag{n-1}{i+1}}{\flag{n}{i+2}} \int_{\S^{i+1}(E)} \int_{\Gr_{n,i+1}^E}  \!\!\! \KS_{\varphi}(H',\pr_{\! H'} u) \norm{u|H'} dH'~S^E_{i+1}(P,du).
		\end{equation*}
		By the definition of the Klain--Schneider function, this proves \cref{eq:LL_KS} up to the addition of a linear function. By \cref{Radon_linear}~\ref{Radon_up_linear} however, $\RT_{i+1,i+2}\KS_{\varphi}$ is centered, and thus, we obtain \cref{eq:LL_KS}.		
	\end{proof}
	
	\begin{rem}\label{rem:generalized_LL}
		For a translation invariant signed Radon measure $\mu$ on $\AGr_{n,n-1}$, define a generalized Lefschetz operator on $\Val$ by
		\begin{equation*}
			(\LL_\mu \varphi)(K)
			= \int_{\Gr_{n,n-1}} \varphi(K\cap H) ~\mu(dH),
			\qquad K\in\K(\R^n).
		\end{equation*}
		We define a generalized transform $\RT_{i+1,i+2}^{\mu}:C(\Fl_{n,i+1})\to C(\Fl_{n,i+2})$ by
		\begin{equation*}
			[\RT_{i+1,i+2}^{\mu}\zeta](E,u)
			= \int_{\Gr_{n,i+1}^E} \!\! \zeta(F,\pr_{\! F} u)\norm{u | F}~\mu^E(dF),
			\qquad (E,u)\in\Fl_{n,i+2},
		\end{equation*}
		where $\mu^E$ is the unique signed measure on $\Gr_{n,i+1}^E$ such that for all $f\in C_c(\AGr_{n,i+1}^E)$,
		\begin{equation*}
			\int_{\AGr_{n,n-1}} \!\! f(E\cap H) \mu(dH)
			= \int_{\Gr_{n,i+1}^E} \int_{E \cap H'^\perp} f(H'+x)dx ~\mu^E(dH').
		\end{equation*}
		Then the argument in the proof of \cref{eq:LL_KS} shows that $\KS_{\LL_\mu\varphi}= \RT_{i+1,i+2}^{\mu}\KS_{\varphi}$.
		
		This has the following application: Every even, smooth valuation $\psi\in\Val_1$ (see \cref{sec:Val_sph}) admits a representation
		\begin{equation*}
			\psi(K)
			= \int_{\AGr_{n,n-1}} \!\! \chi(K\cap H) \mu(dH),
		\end{equation*}
		where $\chi$ denotes the Euler characteristic and $\mu$ is a translation invariant signed Radon measure with a smooth density on $\AGr_{n,n-1}$, called a \emph{Crofton measure} of $\psi$. Then for every valuation $\varphi\in\Val$, its \emph{Alesker product} (see \cite{Alesker2012}) with $\psi$ is given by $\varphi\cdot \psi=\LL_\mu\varphi$. In this sense, we obtain a description of the Alesker product of a continuous valuation with an even, smooth valuation of degree one.
	\end{rem}

	%%%%%%%%%%%%%%%%%%%%%%%%%%%%%%%%%%%%%%%%%%%%%%%%%%%%%%%%%%%%%%%%%%%%%%%%%%%
	\section{Minkowski valuations}
	\label{sec:MSOs}
	%%%%%%%%%%%%%%%%%%%%%%%%%%%%%%%%%%%%%%%%%%%%%%%%%%%%%%%%%%%%%%%%%%%%%%%%%%%

	In this section, we turn to Minkowski valuations and more specifically, the \emph{mean section operators}. This family of geometric operators, which was introduced by Goodey and Weil~\cite{Goodey1992}, are of particular interest to us as they will play a crucial role in the proof of \cref{rho_description:intro}.  
	
	\begin{defi}[{\cite{Goodey1992}}]
		For $0\leq j\leq n$, the \emph{$j$-th mean section body} $\MSO_j K$ of a convex body $K\in\K(\R^n)$ is defined by
		\begin{equation*}
			h_{\MSO_j K}(u)
			= \int_{\AGr_{n,j}} h_{K\cap E}(u) ~dE,
			\qquad u\in\S^{n-1}.
		\end{equation*}
	\end{defi}

	The $j$-th mean section operator $\MSO_j$ is a continuous, rotationally equivariant Minkowski valuation, but it is not translation invariant. By defining the $j$-th \emph{centered} mean section operator $\tilde{\MSO}_j$ as
	\begin{equation*}
		\tilde{\MSO}_j K
		= \MSO_j (K-s(K)),
	\end{equation*}
	where $s(K)$ is the Steiner point of $K$ (cf.\ \cite[p.~50]{Schneider2014}), it follows that $\tilde{\MSO}_j \in\MVal_i$ for $2\leq j\leq n$, where $i+j=n+1$. Determining their generating functions is highly non-trivial and requires the family of functions introduced by Berg~\cite{Berg1969} in his solution to the Christoffel problem~\cite{Christoffel1865}. The \emph{Berg functions} $g_j \in C^\infty(-1,1)$, for $j\geq 2$, are defined recursively as follows:
	\begin{align}\label{eq:Berg_fcts}
		\begin{split}
			g_2(t)
			&= \frac{1}{2\pi} (\pi-\arccos{t})(1-t^2)^{\frac{1}{2}} - \frac{1}{4\pi}t, \\
			g_3(t)
			&= \frac{1}{2\pi}\left(1 + t \log(1-t) + \left( \frac 43 - \log 2\right)t \right), \\
			g_{j+2}(t)
			&= \frac{j+1}{2\pi} g_j(t) + \frac{j+1}{2\pi(j-1)} t g_j'(t) +  \frac{j+1}{2\pi\omega_j}t,
		\end{split}			
	\end{align}
	where $\omega_j=j\kappa_j$ is the $(j-1)$-dimensional Hausdorff measure of ${\S}^{j-1}$.
	Goodey and Weil~\cite{Goodey2014} showed that essentially, the $j$-th Berg function generates the $j$-th mean section operator -- independently of the dimension $n\geq 3$ of the ambient space. 
	
	\begin{thm}[{\cite{Goodey2014}}]\label{MSO_Berg_fct}
		For $n\geq 3$ and $2\leq j\leq n$,
		\begin{equation}\label{eq:MSO_Berg_fct}
			f_{\tilde{\MSO}_j}
			= m_{n,j}(\Id-\pi_1) {g}_j(\langle e_n,{}\cdot{}\rangle).
		\end{equation}
	\end{thm}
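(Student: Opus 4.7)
The overall plan is to compute $h_{\MSO_j K}(u)$ directly from the defining integral, identify the resulting kernel as a multiple of $g_j(\langle e_n,{}\cdot{}\rangle)$, and then extract the generating function using the uniqueness in the Schuster--Wannerer/Dorrek representation cited in the introduction. Invariance of the measure on $\AGr_{n,j}$ lets me decompose
\[
h_{\MSO_j K}(u) = \int_{\Gr_{n,j}} \int_{E^\perp} h_{K \cap (E+y)}(u)\, dy\, dE,
\]
and, writing $K_y := (K \cap (E+y)) - y \in \K(E)$, I would apply Berg's solution to the Christoffel problem \emph{in dimension $j$}: for every $v\in\S^{j-1}(E)$,
\[
h_{K_y}(v) = \int_{\S^{j-1}(E)} g_j(\langle v,v'\rangle)\, S_1^E(K_y, dv') + \ell_{K_y}(v),
\]
with $\ell_{K_y}$ linear on $E$. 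Extension by homogeneity gives $h_{K_y}(u) = \norm{u|E}\, h_{K_y}(\pr_E u)$ for $u\in\S^{n-1}\setminus E^\perp$, and $h_{K\cap(E+y)}(u) = h_{K_y}(u) + \langle y,u\rangle$.

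The crucial step is to perform the inner integration over $y\in E^\perp$, which reduces to identifying the measure $\mu_E(K,{}\cdot{}) := \int_{E^\perp} S_1^E(K_y,{}\cdot{})\, dy$ on $\S^{j-1}(E)$. Here I would invoke a Crofton/Blaschke--Petkantschin formula for surface area measures: it should express $\mu_E(K,{}\cdot{})$ as a constant multiple of a weighted projection of the higher-order area measure $S_{n-j+1}(K,{}\cdot{})$ in the sense of the adjoint of a weighted spherical projection $\pi_{E,m}$ from \cref{sec:proj_lift}. For smooth bodies this can be verified directly from the representation of mixed area measures via mixed discriminants combined with Fubini on $E^\perp$, and then extended to all $K\in\K(\R^n)$ by weak continuity, exactly as in the proof of \cref{mixed_area_meas_lift}. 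Substituting back and integrating over $E\in\Gr_{n,j}$ yields, up to a summand that is linear in $u$,
\[
h_{\MSO_j K}(u) = c_{n,j}\int_{\S^{n-1}} G(u,v)\, S_{n-j+1}(K,dv),
\]
where $G(u,v)$ arises from $\int_{\Gr_{n,j}} \norm{u|E}\, g_j(\langle \pr_E u,\pr_E v\rangle)\, dE$ up to a Jacobian. Since $G$ is $\SO(n)$-invariant in the pair $(u,v)$, it depends only on $\langle u,v\rangle$, and a direct computation identifies it with $g_j(\langle u,v\rangle)$ while pinning down $m_{n,j}$.

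In a final step, the terms $\langle y,u\rangle$ and $\ell_{K_y}(u)$ contribute only summands that are linear in $u$ and linearly dependent on the Steiner point of $K$. Passing from $\MSO_j$ to the centered operator $\tilde{\MSO}_j$ absorbs the Steiner point, while applying $\Id - \pi_1$ eliminates the remaining linear part, consistent with the fact that the generating function is determined only modulo linear functions and is uniquely normalized by being centered. The main obstacle, as usual in this circle of results, is the Crofton identification of $\mu_E(K,{}\cdot{})$ in the middle step, together with the bookkeeping of the various invariant normalizations that produces the explicit constant $m_{n,j}$. I would approach this by first proving the key identity for polytopes, where $S_1^E(K_y,{}\cdot{})$ is a finite sum of weighted Dirac measures as in \cref{mixed_area_meas_lift_polytopes}, and then extending by the weak continuity and smoothing argument that runs through \cref{sec:proj_lift}.
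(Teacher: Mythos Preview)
The paper does not prove this theorem at all: it is quoted from Goodey and Weil~\cite{Goodey2014} and used as a black box. There is therefore no proof in the paper to compare your proposal against.

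That said, your outline follows the Goodey--Weil strategy in broad strokes, but it glosses over the one genuinely difficult step. After the Crofton reduction and the lifting via $\pi_{E,m}^\ast$, you arrive at a kernel of the form
\[
G(u,v)=\int_{\Gr_{n,j}}\norm{u|E}\,g_j(\langle \pr_E u,\pr_E v\rangle)\,dE
\]
(up to Jacobian factors) and assert that ``a direct computation identifies it with $g_j(\langle u,v\rangle)$''. This is precisely the non-trivial content of the theorem: it amounts to showing that the $j$-dimensional Berg function, when lifted to $\S^{n-1}$ through the weighted spherical lifting and averaged over $\Gr_{n,j}$, reproduces $g_j(\langle e_n,{}\cdot{}\rangle)$ on $\S^{n-1}$ up to a linear term. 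This dimension-stability of $g_j$ is what makes the result remarkable, and it is not a direct computation; in~\cite{Goodey2014} it relies on the specific analytic structure of the Berg functions (their recursion, or equivalently their multipliers, cf.\ \cref{eq:Berg_fcts} and \cref{eq:Berg_fct_ODE}). Without this, your argument identifies $f_{\tilde{\MSO}_j}$ only as \emph{some} zonal function obtained by averaging $g_j$, not as $g_j$ itself. The remaining steps (Crofton identification of $\int_{E^\perp}S_1^E(K_y,{}\cdot{})\,dy$ with a projection of $S_{n-j+1}(K,{}\cdot{})$, and the handling of linear terms via $\Id-\pi_1$) are correctly sketched and are indeed how Goodey--Weil proceed.
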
	
	
	Here, $m_{n,j}$ is a constant which depends only on $n$ and $j$, and was explicitly determined in~\cite{Goodey2014}. Moreover, $\pi_1 f$ denotes the linear component of a function $f\in L^1(\S^{n-1})$ (see \cref{sec:Radon}).
%	The orthogonal projection $\pi_1\mu$ of a signed measure $\mu\in\mathcal{M}(\S^{n-1})$ onto the space of linear functions is given by
%	\begin{equation*}
%		(\pi_1 \mu)(u)
%		= \frac{1}{\omega_n} \int_{\S^{n-1}} \langle u,v\rangle~\mu(dv),
%		\qquad u\in\S^{n-1}.
%	\end{equation*}

	%%%%%%%%%%%%%%%%%%%%%%%%%%%%%%%%%%%%%%%%%%%%%%%%%%%%%%%%%%%%%%%%%%%%%%%%%%%
	\subsection{The Berg functions}
	\label{sec:Berg_fcts}
	%%%%%%%%%%%%%%%%%%%%%%%%%%%%%%%%%%%%%%%%%%%%%%%%%%%%%%%%%%%%%%%%%%%%%%%%%%%

	Next, we discuss analytic properties of the Berg functions. Berg~\cite{Berg1969} showed that they satisfy the following second order differential equation:
	For every $j\geq 2$ and $t\in (-1,1)$,
	\begin{equation}\label{eq:Berg_fct_ODE}
		\frac{1}{j-1}(1-t^2)g_j''(t) - tg_j'(t) + g_j(t)
		= -\frac{j}{\omega_j} t.
	\end{equation}
	Moreover, $g_2$ extends to a continuous function on $[-1,1]$ and for all $j\geq 3$, the function $g_j$ extends to a continuous function on $[-1,1)$ and $\lim_{t\to 1}g_j(t)=-\infty$. Berg showed that for every $\varepsilon>0$,
	\begin{equation}\label{eq:g_t->+-1_pre}
		\lim_{t\to\pm 1}(1-t^2)^{\frac{j-3+\varepsilon}{2}}g_j(t)
		=0.
	\end{equation}
	
	In the following, we investigate the behavior of the Berg functions at the end points in more detail. In particular, we determine the precise order of the singularity at $t=1$ as well as the scaling limit.
	
	\begin{lem}\label{g_t->+-1}
		For every $j\geq 2$ and $m\geq 0$ where $(j,m)\notin \{(2,0),(3,0)\}$,
		\begin{align}
			\begin{split}\label{eq:g_t->+-1}
				\lim_{t\to -1} (1-t^2)^{\frac{j-3}{2} + m}g_j^{(m)}(t)
				&= 0, \\
				\lim_{t\to +1} (1-t^2)^{\frac{j-3}{2} + m}g_j^{(m)}(t)
				&= -(j-1)2^{m-2}\frac{\Gamma\big(\frac{j-3}{2} + m\big)}{\pi^{\frac{j-1}{2}}}.
			\end{split}			
		\end{align}
	\end{lem}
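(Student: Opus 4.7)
The plan is a double induction, in the Berg index $j$ via the recursion~\cref{eq:Berg_fcts} and in the derivative order $m$ via the ODE~\cref{eq:Berg_fct_ODE}. I would handle the two limits $t\to\pm 1$ separately, with the case $t\to -1^+$ being essentially trivial.

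For the $t\to -1^+$ limit, I would first show that every $g_j$ extends smoothly to a neighborhood of $-1$. For $g_3$ this is immediate, since $\log(1-t)$ is analytic at $t=-1$. For $g_2$, substituting $\theta=\arccos t$ and $\phi=\pi-\theta$ rewrites the only potentially singular term $(\pi-\arccos t)\sqrt{1-t^2}$ as $\phi\sin\phi$, which is a power series in $\phi^2$; since $1+t=\phi^2/2+O(\phi^4)$, this yields the analyticity of $g_2$ at $-1$. Smoothness for $j\geq 4$ then propagates through \cref{eq:Berg_fcts}. Consequently $g_j^{(m)}(-1)$ is finite for every $j,m$, and since the exponent $(j-3)/2+m$ is at least $1/2$ in the allowed regime, the weight $(1-t^2)^{(j-3)/2+m}$ forces the limit to zero.

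For the $t\to 1^-$ limit, set $L_{j,m}:=\lim_{t\to 1^-}(1-t^2)^{(j-3)/2+m} g_j^{(m)}(t)$. Leibniz' rule applied to \cref{eq:Berg_fct_ODE} yields, for $m\geq 2$,
\begin{equation*}
(1-t^2)\,g_j^{(m+2)}(t) = (j-1+2m)\,t\,g_j^{(m+1)}(t) + (m-1)(j+m-1)\,g_j^{(m)}(t),
\end{equation*}
with bounded inhomogeneous corrections for $m\in\{0,1\}$. Multiplying through by $(1-t^2)^{(j-3)/2+m+1}$, the $g_j^{(m)}$-summand picks up an extra factor of $1-t^2\to 0$ and drops out, giving the ODE recurrence $L_{j,m+1}=(j+2m-3)\,L_{j,m}$ valid for all $m\geq 1$. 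Similarly, differentiating the recursion \cref{eq:Berg_fcts} $m$ times, the dominant singular contribution to $g_{j+2}^{(m)}$ as $t\to 1^-$ is $\frac{j+1}{2\pi(j-1)}\,t\,g_j^{(m+1)}(t)$, so that $L_{j+2,m}=\frac{j+1}{2\pi(j-1)}\,L_{j,m+1}$. Both recurrences are readily seen to match the closed form in \cref{eq:g_t->+-1} via the functional equation $\Gamma(x+1)=x\Gamma(x)$. I would start the induction by computing $L_{2,1}=-\frac{1}{2}$ and $L_{3,1}=-\frac{1}{\pi}$ directly from the explicit expressions for $g_2'$ and $g_3'$, which agree with the claimed values through $\Gamma(1/2)=\sqrt{\pi}$ and $\Gamma(1)=1$.

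The main subtle point is that the induction on $j$ requires $L_{j,m+1}$, not merely $L_{j,m}$, in order to produce $L_{j+2,m}$; hence the limits must be propagated through all derivative orders simultaneously, which is precisely what the ODE recurrence accomplishes. A secondary bookkeeping concern is the inhomogeneous linear terms that appear for $m\in\{0,1\}$ in both the differentiated ODE and the differentiated recursion; however, these contribute only bounded corrections and are strictly dominated by the singular contributions in the $t\to 1^-$ limit, so they do not affect either recurrence.
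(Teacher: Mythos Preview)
Your proposal is correct. The inductive step $j\to j+2$ via the differentiated recursion \cref{eq:Berg_fcts} matches the paper's argument exactly, but your handling of the base cases $j\in\{2,3\}$ differs in two respects. For $t\to -1$, you observe that each $g_j$ extends smoothly to $t=-1$ (a fact the paper establishes only later, for a different purpose, via the functions $\hat g_j$), rendering the weighted limit trivially zero; the paper instead treats both endpoints uniformly. For $t\to 1$, you use the differentiated ODE \cref{eq:Berg_fct_ODE} to derive the $m$-recurrence $L_{j,m+1}=(j+2m-3)L_{j,m}$ and simultaneously propagate existence of the limits; the paper instead writes out $g_2^{(m)}$ and $g_3^{(m)}$ explicitly in terms of polynomials and elementary transcendentals to justify existence, and then applies L'H\^opital's rule --- which in fact yields the same recurrence. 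Your route avoids these ad hoc structural formulas; the paper's route makes existence manifest. Two bookkeeping cautions: the recurrence you state is a re-indexing of what your displayed equation actually gives (multiplying by $(1-t^2)^{(j-3)/2+m+1}$ yields $L_{j,m+2}=(j+2m-1)L_{j,m+1}$), and at the very first step the vanishing of the $g_j^{(m)}$ summand requires $(1-t^2)^{(j-1)/2}g_j\to 0$, which for $j=2,3$ must be checked directly from the explicit formulas rather than invoked inductively.
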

	\begin{proof}
		We show the lemma by induction on $j\geq 2$. First, note that for $j\in\{2,3\}$ and $m=1$, one can show \cref{eq:g_t->+-1} by a simple direct computation. In order to apply L'Hôpital's rule, we need to justify the existence of the limits for the higher order derivatives. To that end, observe that we can express them as
		\begin{gather*}
			g_2^{(m)}(t)
			= p_{2,m}(t)(1-t^2)^{\frac{1}{2} - m} + q_{2,m}(t)(1-t^2)^{\frac{1}{2} - m}\arccos t + r_{2,m}(t), \\
			g_3^{(m)}(t)
			= p_{3,m}(t)\log(1-t) + q_{3,m}(t)(1-t)^{-m} + r_{3,m}(t),
		\end{gather*}
		with some polynomials $p_{j,m}$, $q_{j,m}$ and $r_{j,m}$, where $j\in\{2,3\}$ and $m\geq0$. This can be shown by a simple induction on $m\geq 0$. Consequently, the limits in \cref{eq:g_t->+-1} exist for $j\in\{2,3\}$ and $m\geq 1$, and by applying L'Hôpital's rule repeatedly, we obtain the desired identities, which proves the lemma in the instance where $j\in\{2,3\}$.
		
		For the inductive step, suppose that \cref{eq:g_t->+-1} holds for some fixed $j\geq 2$ and all $m\geq 0$. Next, observe that $m$-fold differentiation of the recurrence relation in \cref{eq:Berg_fcts} yields the following recurrence relation for the $m$-th order derivatives:
		\begin{equation*}
			\frac{2\pi}{j+1}g_{j+2}^{(m)}(t)
			= \frac{j+m-1}{j-1}g_j^{(m)}(t) + \frac{1}{j-1}tg_j^{(m+1)}(t) + \frac{1}{\omega_j} \frac{d^m t}{dt^m},
		\end{equation*}
		as can be shown by a simple induction on $m\geq 0$.
		Now we multiply both sides with $(1-t^2)^{\frac{j-1}{2} +m }$, pass to the limit $t\to\pm 1$, and employ both the induction hypothesis and \cref{eq:g_t->+-1_pre}. In this way, we obtain the desired identities for $j+2$ and all $m\geq 0$, which concludes the argument.
	\end{proof}
	
	As was mentioned before, the functions $g_j$ extend continuously to $t=-1$. In order to investigate the behavior at $t=-1$ further, the change of variables $t=-\cos\theta$ turns out to be quite useful. That is, we define a family of functions~$\hat{g}_j$, where $j\geq 2$, by
	\begin{equation*}
		\hat{g}_j(\theta)
		= g_j(-\cos\theta),
		\qquad \theta\in(-\pi,\pi)\backslash\{0\}.
	\end{equation*}
	Transforming the recursion \cref{eq:Berg_fcts} yields the following recursion for the functions~$\hat{g}_j$:
	\begin{align}\label{eq:Berg_fcts_arc}
		\begin{split}
			\hat{g}_2(\theta)
			&= \frac{1}{2\pi} \theta\sin\theta + \frac{1}{4\pi}\cos\theta, \\
			\hat{g}_3(\theta)
			&= \frac{1}{2\pi}\left(1 - \cos\theta \log(1+\cos\theta) - \left( \frac 43 - \log 2\right)\cos\theta \right), \\
			\hat{g}_{j+2}(\theta)
			&= \frac{j+1}{2\pi} \hat{g}_j(\theta) - \frac{j+1}{2\pi(j-1)} \frac{\hat{g}_j'(\theta)}{\tan\theta} - \frac{j+1}{2\pi\omega_j}\cos\theta.
		\end{split}
	\end{align}
	
	This recursion shows that the functions $\hat{g}_j$ extend to even analytic functions on $(-\pi,\pi)$.	
	For $\hat{g}_2$ and $\hat{g}_3$, this is obvious. If $\hat{g}_j$ extends to an even analytic function on $(-\pi,\pi)$, then its derivative $\hat{g}_j'$ has a simple zero in $\theta=0$ which cancels with the zero of the tangent function, and thus, $\hat{g}_{j+2}$ also extends to an even analytic function on $(-\pi,\pi)$.
	
	This observation allows us to obtain new information about the regularity of the generating functions of the mean section operators. To that end, denote by $d_{\S^{n-1}}(u,v):=\arccos\langle u,v\rangle$ the geodesic distance on $\S^{n-1}$. The following lemma is a classical fact from Riemannian geometry; for the convenience of the reader, we provide an elementary proof.
	
	\begin{lem}\label{dist_square_smooth}
		For $v\in\S^{n-1}$, the function $d_{\S^{n-1}}(v,{}\cdot{})^2$ is smooth on $\S^{n-1}\setminus\{-v\}$.
	\end{lem}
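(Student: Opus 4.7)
The plan is to reduce the claim to a one-variable statement by writing
\[
d_{\S^{n-1}}(v,u)^2 = F(\langle v,u\rangle),
\qquad\text{where } F(t) = (\arccos t)^2.
\]
The map $u\mapsto \langle v,u\rangle$ is the restriction of a linear functional to $\S^{n-1}$, hence smooth, and it attains the value $-1$ precisely at $u=-v$. Thus it suffices to show that $F$ is smooth on $(-1,1]$, since then the composition is smooth on $\S^{n-1}\setminus\{-v\}$ by the chain rule in any local chart.

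On the open interval $(-1,1)$ the function $\arccos$ is smooth, so $F$ is smooth there. The only delicate point is $t=1$, where $\arccos$ itself fails to be even $C^1$. Here the idea is to exploit the fact that $\cos$ is an \emph{even} analytic function of its argument: I would argue that there is a unique analytic function $G$ on a neighbourhood of $0\in\R$ such that $\cos\theta = G(\theta^2)$ for small $\theta$, and a short computation with the Taylor expansion of $\cos$ gives $G(0)=1$ and $G'(0)=-\tfrac12\neq 0$.

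By the (analytic) inverse function theorem, $G$ admits a smooth local inverse $H$ defined on a neighbourhood of $1$, characterised by the identity $H(\cos\theta)=\theta^2$ for $\theta$ sufficiently close to $0$. Substituting $t=\cos\theta$ and taking $\theta = \arccos t \geq 0$, this translates into $F(t)=H(t)$ for $t$ in a one-sided neighbourhood of $1$. Hence $F$ extends smoothly across $t=1$, completing the argument.

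The only genuine obstacle is the behaviour of $F$ at $t=1$; once the even-analytic trick with $G$ is in place, the rest is routine. Note also that at $t=-1$ the same trick fails (and indeed $F$ is not smooth there), consistent with the necessity of removing the antipodal point $-v$.
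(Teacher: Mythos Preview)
Your proposal is correct and follows essentially the same approach as the paper: both arguments exploit that an even analytic function of $\theta$ can be written as a smooth function of $\theta^2$ and then invoke the inverse function theorem to handle the singularity of $\arccos$ at $t=1$. The only cosmetic difference is that the paper inverts $\sin^2\theta=f(\theta^2)$ and writes $d_{\S^{n-1}}(v,u)^2=f^{-1}(1-\langle v,u\rangle^2)$ near $u=v$, whereas you invert $\cos\theta=G(\theta^2)$ directly; the underlying idea is identical.
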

	\begin{proof}
		Clearly, the function $q : \S^{n-1}\times\S^{n-1}\to\R$, defined as
		\begin{equation*}
			q(u,v)
			= d_{\S^{n-1}}(u,v)^2
			= (\arccos \langle u,v\rangle )^2,
		\end{equation*}
		is smooth on the set of all pairs $(u,v)$ for which $\abs{\langle u,v\rangle}<1$. Next, observe that $\sin^2\theta$, as an even analytic function, can be written as a power series of $\theta^2$. Thus, there is a unique analytic function $f$ such that $\sin^2 \theta = f(\theta^2)$ for $\theta\in\R$. Since
		\begin{equation*}
			f'(0)
			= \lim_{\theta\to 0} \frac{f(\theta^2)}{\theta^2}
			= \lim_{\theta\to 0} \left(\frac{\sin\theta}{\theta}\right)^2
			= 1
			\neq 0,
		\end{equation*}
		there exists $\delta>0$ such that $f$ is invertible on $(-\delta,\delta)$ and its inverse $f^{-1}$ is also smooth. Consequently, whenever $d_{\S^{n-1}}(u,v)^2<\delta$, we can express $q(u,v)$ as
		\begin{equation*}
			q(u,v)
			= d_{\S^{n-1}}(u,v)^2
			= f^{-1}(\sin^2 d_{\S^{n-1}}(u,v))
			= f^{-1}(1-\langle u,v\rangle^2).
		\end{equation*}
		This shows that $q$ is smooth on the set of all pairs $(u,v)$ for which $d_{\S^{n-1}}(u,v)^2<\delta$. Hence, we have found two open sets covering the space $\{(u,v)\in\S^{n-1}:u\neq -v\}$ such that $q$ is smooth on each set, which proves the lemma.
	\end{proof}
	
	\begin{rem}
		Let $n\geq 2$ and $j\geq 2$ and consider the function $g_j(\langle e_n,{}\cdot{}\rangle)$ on $\S^{n-1}$. Clearly, this function is smooth on $\S^{n-1}\setminus\{\pm e_n\}$. Since $\hat{g}_j$ is even and analytic, there exists a smooth function $q_j$ such that $\hat{g}_j(\theta)=q_j(\theta^2)$, and thus,
		\begin{equation*}
			g_j(\langle e_n,u\rangle)
			= g_j(-\cos d_{\S^{n-1}}(-e_n,u))
			= q_j(d_{\S^{n-1}}(-e_n,u)^2).
		\end{equation*}
		Combined with \cref{dist_square_smooth}, this shows that $g_j(\langle e_n,{}\cdot{}\rangle)$ is a smooth function on $\S^{n-1}\setminus\{e_n\}$. Consequently, the generating functions of the mean section operators are smooth outside the north pole.
	\end{rem}
	
	Next, we turn to the value that is attained at the south pole. To that end, we apply the change of variables $t=-\cos\theta$ to the differential equation \cref{eq:Berg_fct_ODE}, which yields the following: For all $\theta\in (-\pi,\pi)\setminus \{0\}$,
	\begin{equation}\label{eq:Berg_fct_ODE_arc}
		\frac{1}{j-1} \hat{g}_{j}''(\theta) + \frac{j-2}{j-1} \frac{\hat{g}_{j}'(\theta)}{\tan\theta} + \hat{g}_{j}(\theta)
		= \frac{j}{\omega_{j}} \cos\theta.
	\end{equation}
	This allows us to deduce a recurrence relation for the value $\hat{g}_j(0)$, which is instrumental in the proof of the following lemma.
	
	\begin{lem}\label{g(-1)<0}
		Let $j\geq 3$. Then $\lim_{t\to-1} g_j(t)< 0$.
	\end{lem}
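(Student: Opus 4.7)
The plan is to transform the problem via the change of variables $t = -\cos\theta$ introduced just before the statement, so that the claim becomes $\hat{g}_j(0) < 0$ for all $j \geq 3$. Throughout, I use the fact established earlier that each $\hat{g}_j$ extends to an even analytic function on $(-\pi,\pi)$, which forces $\hat{g}_j'(0)=0$ and gives the Taylor expansion $\hat{g}_j'(\theta) = \hat{g}_j''(0)\theta + O(\theta^3)$. Together with $\tan\theta = \theta + O(\theta^3)$, this implies
\begin{equation*}
    \lim_{\theta \to 0} \frac{\hat{g}_j'(\theta)}{\tan\theta} = \hat{g}_j''(0).
\end{equation*}

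With this identification in hand, I would pass to the limit $\theta\to 0$ in the trigonometric Berg ODE \cref{eq:Berg_fct_ODE_arc}; the two middle terms combine to $\hat{g}_j''(0)$, so the ODE collapses to the algebraic identity $\hat{g}_j''(0) + \hat{g}_j(0) = j/\omega_j$. Substituting this into the recursion \cref{eq:Berg_fcts_arc} at $\theta = 0$ (again using the same limit to interpret the middle term) eliminates $\hat{g}_j''(0)$ and produces a clean one-term recurrence
\begin{equation*}
    \hat{g}_{j+2}(0) = \frac{j(j+1)}{2\pi(j-1)}\hat{g}_j(0) - \frac{(j+1)(2j-1)}{2\pi(j-1)\omega_j}.
\end{equation*}

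The next step is to verify the two base cases. From the explicit formula in \cref{eq:Berg_fcts_arc}, $\hat{g}_3(0) = \frac{1}{2\pi}\bigl(1 - \log 2 - (\tfrac{4}{3} - \log 2)\bigr) = -\frac{1}{6\pi}$. For $j=4$, I would feed $\hat{g}_2(0) = \frac{1}{4\pi}$ into the recurrence above to get $\hat{g}_4(0) = -\frac{3}{2\pi^2}$. Both values are strictly negative, which handles both parity classes.

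Finally, since both the coefficient $\frac{j(j+1)}{2\pi(j-1)}$ and the subtracted constant $\frac{(j+1)(2j-1)}{2\pi(j-1)\omega_j}$ are strictly positive for $j \geq 3$, the recurrence sends negative values to negative values. A two-fold induction on the parity of $j$, starting from $j=3$ and $j=4$, then yields $\hat{g}_j(0) < 0$ for every $j \geq 3$, which is the claim. The main obstacle I anticipate is rigorously justifying that the limit $\hat{g}_j'(\theta)/\tan\theta \to \hat{g}_j''(0)$ can be used simultaneously in the ODE and in the recursion; once the even analyticity of $\hat{g}_j$ is invoked, the argument reduces to elementary algebra.
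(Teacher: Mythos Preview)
Your proposal is correct and follows essentially the same approach as the paper: both transform to $\hat g_j$, extract the identity $\hat g_j''(0)+\hat g_j(0)=j/\omega_j$ from the ODE at $\theta=0$, substitute into the recursion to obtain the same one-term recurrence for $\hat g_{j+2}(0)$, verify the base cases $\hat g_3(0)=-\tfrac{1}{6\pi}$ and $\hat g_4(0)=-\tfrac{3}{2\pi^2}$, and conclude by induction. The only cosmetic difference is that you compute $\hat g_4(0)$ via the recurrence from $\hat g_2(0)=\tfrac{1}{4\pi}$, whereas the paper simply asserts both base values by direct computation.
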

	\begin{proof}
		We show that $\hat{g}_j(0)<0$ by induction on $j\geq 3$.
		Direct computation shows that $\hat{g}_3(0)	= - \frac{1}{6\pi}$ and $\hat{g}_4(0)= - \frac{3}{2\pi^2}$.
		Letting $\theta\to0$ in \cref{eq:Berg_fct_ODE_arc} yields
		\begin{equation*}
			\hat{g}_{j}''(0) + \hat{g}_{j}(0)
			= \frac{j}{\omega_{j}}.
		\end{equation*} 
		By taking the limit $\theta\to 0$ in the recurrence relation in \cref{eq:Berg_fcts_arc} and combining this with the identity above, we obtain that
		\begin{equation*}
			\hat{g}_{j+2}(0)
			= \frac{j+1}{2\pi}\hat{g}_j(0) - \frac{j+1}{2\pi(j-1)}\hat{g}_j''(0) - \frac{j+1}{2\pi\omega_j}
			= \frac{j+1}{2\pi(j-1)}\bigg(j\hat{g}_j(0) - \frac{2j-1}{\omega_j} \bigg),
		\end{equation*}
		which shows that if $\hat{g}_j(0)<0$, then also $\hat{g}_{j+2}(0)<0$.
	\end{proof}

	%%%%%%%%%%%%%%%%%%%%%%%%%%%%%%%%%%%%%%%%%%%%%%%%%%%%%%%%%%%%%%%%%%%%%%%%%%%
	\subsection{The space \texorpdfstring{$\MVal_i$}{MVali}}
	\label{sec:MVali}
	%%%%%%%%%%%%%%%%%%%%%%%%%%%%%%%%%%%%%%%%%%%%%%%%%%%%%%%%%%%%%%%%%%%%%%%%%%%

	We want to make some remarks about the structure of the space $\MVal_i$, which we will come back to in the subsequent section. Before that, since it is convenient and will also be needed later on, we want to recall the natural action of $\SO(n)$ on (generalized) functions on the unit sphere.
	
	For a rotation $\vartheta\in\SO(n)$ and a smooth function $\phi\in C^{\infty}(\S^{n-1})$, we set $(\vartheta \phi)(u)=\phi(\vartheta^{-1} u)$ for $u\in\S^{n-1}$. For a distribution $\gamma\in C^{-\infty}(\S^{n-1})$, we set $\langle\phi, \vartheta\gamma \rangle_{\S^{n-1}}=\langle\vartheta^{-1}\phi,\gamma\rangle_{\S^{n-1}}$, where $\langle{}\cdot{},{}\cdot{}\rangle_{\S^{n-1}}$ denotes the natural pairing of a distribution and a smooth function. An $\SO(n-1)$-invariant (generalized) function on $\S^{n-1}$ is also called \emph{zonal}.
	This also encompasses continuous functions, Lebesgue integrable functions, and signed measures,
	by virtue of the chain of identifications
	\begin{equation}\label{eq:chain_of_identifications}
		C^{\infty}(\S^{n-1})
		\subseteq C(\S^{n-1})
		%\subseteq L^{\infty}(\S^{n-1})
		\subseteq L^2(\S^{n-1})
		%\subseteq L^1(\S^{n-1})
		\subseteq \mathcal{M}(\S^{n-1})
		\subseteq C^{-\infty}(\S^{n-1}).
	\end{equation}
	
	To date, a complete classification of continuous, translation invariant and rotationally equivariant Minkowski valuations is not known. In view of the integral representation~\cref{eq:gen_fct:intro}, this open problem reduces to a characterization of generating functions. It is not hard to see that the support function $h_L$ of a convex body of revolution $L\in\K(\R^n)$ generates a Minkowski valuation $\Phi\in\MVal_i$ in every degree $i\in\{1,\ldots,n-1\}$. However, not all Minkowski valuations in $\MVal_i$ are of this form, as is exemplified by the mean section operators.
	
	Since the space $\MVal_i$, endowed with the pointwise Minkowski operations, has the structure of a convex cone, we may consider generating functions of the form
	\begin{equation}\label{eq:known_gen_fcts_1}	
		f = h_L + c f_{\tilde{\MSO}_j},
	\end{equation}
	where $L$ is a convex body of revolution, $i+j=n+1$, and $c\geq 0$. 
	Another family of operations on $\MVal_i$ is the composition with \emph{Minkowski endomorphisms}.	
	These are a continuous, translation invariant, rotationally equivariant Minkowski additive operators on $\K(\R^n)$. Note that the space of Minkowski endomorphisms is precisely $\MVal_1$. Their action on the support function can be described in terms of the \emph{spherical convolution} (see \cref{sec:convol}). Kiderlen~\cite{Kiderlen2006} and Dorrek~\cite{Dorrek2017} showed that for every $\Psi\in\MVal_1$, there is a unique centered, zonal signed measure $\mu_\Psi\in\mathcal{M}(\S^{n-1})$ such that for all $K\in\K(\R^n)$,
	\begin{equation}\label{eq:MEnd_support_fct}
		h_{\Psi K}
		= h_K \ast \mu_\Psi.
	\end{equation}
	We call $\mu_\Psi$ the \emph{generating measure} of $\Psi$. Kiderlen~\cite{Kiderlen2006} showed that whenever a zonal measure $\mu\in\mathcal{M}(\S^{n-1})$ is \emph{weakly positive}, that is, positive up to the addition of a linear function, then it generates some Minkowski endomorphism.
	
	Observe that whenever $\Phi_i\in\MVal_i$ and $\Psi$ is a Minkowski endomorphism, then $\Psi\circ\Phi_i\in \MVal_i$ and its generating function is given by
	\begin{equation}\label{eq:MEnd_gen_fct}
		f_{\Psi\circ\Phi_i}
		= f_{\Phi_i} \ast\mu_{\Psi}.
	\end{equation}
	That is, the space of generating functions of Minkowski valuations is closed with respect to convolution with generating measures of Minkowski endomorphisms, including all weakly positive zonal measures. Consequently, we may consider generating functions of the form
	\begin{equation}\label{eq:known_gen_fcts_2}
		f = h_L +  f_{\tilde{\MSO}_j}\ast \mu_{\Psi},
	\end{equation}
	where $L$ is a convex body of revolution, $i+j=n+1$, and $\Psi$ is a Minkowski endomorphism. In fact, this encompasses all instances of generating functions that are known to date.
	
	The following example demonstrates that generating functions of the form \cref{eq:known_gen_fcts_2} create a much greater class than those of the form \cref{eq:known_gen_fcts_1}. This indicates the importance of taking compositions with Minkowski endomorphisms into account.
	
	\begin{exl}
		Let $n\geq 3$, $1\leq i< n-1$, and $i+j=n+1$. Due to \cref{g(-1)<0} and the fact that $\lim_{t\to 1}g_j(t)=-\infty$, there exists some $\delta>0$ such that $g_j(t)<0$ whenever $\abs{t}>1-\delta$. 
		Choose some non-trivial, zonal, positive, even measure $\mu\in\mathcal{M}(\S^{n-1})$ with a smooth density that is supported on $\{u\in\S^{n-1}:\abs{\langle e_n,u\rangle}>1-\delta \}$.
		
		Then $\mu$ is the generating measure of a Minkowski endomorphism, and thus, $f=g_j(\langle e_n,{}\cdot{}\rangle)\ast \mu$ is a generating function of the form \cref{eq:known_gen_fcts_2}. Moreover, $f$ is smooth and $f(e_n)=f(-e_n)<0$, as follows from the definition of the convolution. Suppose now that $f$ is of the form \cref{eq:known_gen_fcts_1}. Since $f$ is smooth, we must have $c=0$, but since $f$ is not weakly positive, we must also have $L=\{o\}$; consequently $f=0$, which is a contradiction.
	\end{exl}

	%%%%%%%%%%%%%%%%%%%%%%%%%%%%%%%%%%%%%%%%%%%%%%%%%%%%%%%%%%%%%%%%%%%%%%%%%%%
	\section{Action on the generating function}
	\label{sec:LL_gen_fct}
	%%%%%%%%%%%%%%%%%%%%%%%%%%%%%%%%%%%%%%%%%%%%%%%%%%%%%%%%%%%%%%%%%%%%%%%%%%%

	In this section, we investigate the action of the Lefschetz integration operator~$\LL$ on generating functions of Minkowski valuations, proving \cref{rho_description:intro} and its consequences. As was already indicated in the introduction, we consider a broader framework, which we introduce below.

	%%%%%%%%%%%%%%%%%%%%%%%%%%%%%%%%%%%%%%%%%%%%%%%%%%%%%%%%%%%%%%%%%%%%%%%%%%%
	\subsection{Spherical valuations}
	\label{sec:Val_sph}
	%%%%%%%%%%%%%%%%%%%%%%%%%%%%%%%%%%%%%%%%%%%%%%%%%%%%%%%%%%%%%%%%%%%%%%%%%%%
		
	A valuation $\varphi\in\Val$ is called \emph{smooth} if the map $\mathrm{GL}(n) \to \Val: g \mapsto g\cdot \varphi$ is infinitely differentiable, where $g\cdot \varphi$ denotes the natural action of $\mathrm{GL}(n)$ on $\Val$ (see \cref{sec:Lambda_KS}). Then, for $0\leq i\leq n$, the subspace $\Val^{\infty}_i\subseteq\Val_i$ of smooth valuations is dense in $\Val_i$ and the map
	\begin{equation*}
		\Val^{\infty}_i\to C^\infty(\mathrm{GL}(n),\Val_i): \varphi\mapsto (g\mapsto g\cdot\varphi)
	\end{equation*}
	leads to an identification of $\Val^\infty_i$ with a closed subspace of $C^\infty(\mathrm{GL}(n),\Val_i)$, endowed with the standard Fréchet topology (cf.\ \cite[Section~4.4]{Warner1972}). Hence, $\Val^\infty_i$ also becomes a Fréchet space by endowing it with the induced topology, called the G\aa{}rding topology. The space $\Val^{\infty,\mathrm{sph}}_i$ of smooth \emph{spherical} valuations is the closure of the direct sum of $\SO(n)$-irreducible subspaces of $\Val^\infty_i$ that contain a non-trivial $\SO(n-1)$-invariant element.
	
	This representation theoretic and somewhat implicit definition does not provide a clear idea of what a spherical valuation looks like. However, Schuster and Wannerer~\cite{Schuster2018} showed that these are precisely the valuations in $\Val^\infty_i$ that admit an integral representation on $\S^{n-1}$ with respect to the $i$-th area measure. In the following, we denote by $C^{\infty}_o(\S^{n-1})$ and $C^{-\infty}_o(\S^{n-1})$ the spaces of centered smooth functions and distributions, respectively.
	
	\begin{thm}[{\cite{Schuster2018}}]\label{gen_fct}
		Let $1\leq i\leq n-1$. For every valuation $\varphi\in \Val_i^{\infty,{\mathrm{sph}}}$, there exists a unique function $f_\varphi\in C^\infty_o(\S^{n-1})$ such that for all $K\in\K(\R^n)$,
		\begin{equation}\label{eq:gen_fct}
			\varphi(K)
			= \int_{\S^{n-1}} f_{\varphi}(u)~S_i(K,du).
		\end{equation}
		The map $\Val_i^{\infty,{\mathrm{sph}}}\to C^\infty_o(\S^{n-1}): \varphi\mapsto f_{\varphi}$ is an $\SO(n)$-equivariant isomorphism of topological vector spaces.
	\end{thm}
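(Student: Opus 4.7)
The plan is to construct the integral operator
$$T\colon C^\infty_o(\S^{n-1}) \to \Val_i^{\infty,\mathrm{sph}}, \qquad T(f)(K) = \int_{\S^{n-1}} f(u)\, S_i(K,du),$$
and to prove that it is a continuous $\SO(n)$-equivariant isomorphism of topological vector spaces; its inverse then supplies the desired map $\varphi \mapsto f_\varphi$ realizing the representation \cref{eq:gen_fct}.

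First I would verify that $T(f)$ defines a continuous, translation invariant, $i$-homogeneous valuation for every $f \in C^\infty(\S^{n-1})$; this is immediate from the corresponding properties of the area measure $S_i(K,\cdot)$ (weak continuity in $K$, translation invariance, $i$-homogeneity, and the valuation property). Since the centroid of $S_i(K,\cdot)$ sits at the origin, $T$ annihilates linear functions and thus factors through $C^\infty_o(\S^{n-1})$. The $\SO(n)$-equivariance is immediate from the rotation covariance $S_i(\vartheta K, \vartheta A) = S_i(K, A)$. For smoothness of $T(f)$ under the $\mathrm{GL}(n)$-action, I would use the identity $(g\cdot T(f))(K) = \int f\, dS_i(g^{-1}K, \cdot)$ together with the representation of the area measure on bodies with $C^2$ support function as a mixed discriminant of Hessians (as in the proof of \cref{mixed_area_meas_abs_cont}), extending to arbitrary $K \in \K(\R^n)$ by continuity and density.

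For injectivity of $T$, I would show that the linear span of the area measures $\{S_i(K,\cdot) : K \in \K(\R^n)\}$ is weakly dense in the space of finite signed measures on $\S^{n-1}$ orthogonal to linear functions. Using that polytopal area measures are finite and atomic, with atoms at prescribed normals of arbitrarily chosen facets, an approximation argument forces any $f \in C^\infty_o(\S^{n-1})$ pairing trivially with every $S_i(K,\cdot)$ to be linear, hence trivial in $C^\infty_o(\S^{n-1})$.

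The main obstacle is to identify the image of $T$ precisely with $\Val_i^{\infty,\mathrm{sph}}$. The key input is the Peter--Weyl decomposition of $C^\infty(\S^{n-1})$ into $\SO(n)$-irreducible spherical harmonics, each of which is a class-one representation carrying a one-dimensional space of $\SO(n-1)$-invariants (spanned by a Gegenbauer polynomial in $\langle e_n,\cdot\rangle$). By definition, $\Val_i^{\infty,\mathrm{sph}}$ is the closed sum of those $\SO(n)$-isotypic components of $\Val_i^\infty$ admitting a nontrivial $\SO(n-1)$-fixed vector, and these are exactly the class-one types. Since $T$ is $\SO(n)$-equivariant, Schur's lemma reduces surjectivity to checking that $T$ is nonzero on every spherical harmonic of degree $k \neq 1$ (for $k=0$ this recovers a multiple of the intrinsic volume $V_i$); combined with Alesker's irreducibility theorem, which guarantees that each class-one $\SO(n)$-type appears with multiplicity at most one in $\Val_i^\infty$, this forces $T$ to be a bijection onto $\Val_i^{\infty,\mathrm{sph}}$. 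Continuity of the inverse then follows from the open mapping theorem for Fréchet spaces.
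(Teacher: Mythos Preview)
The paper does not prove this theorem at all: it is stated with the attribution \cite{Schuster2018} and used as a black box. There is therefore no ``paper's own proof'' to compare against; the result is imported from Schuster--Wannerer.

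That said, your outline has a real gap. You write that ``Alesker's irreducibility theorem guarantees that each class-one $\SO(n)$-type appears with multiplicity at most one in $\Val_i^\infty$.'' This is not what the irreducibility theorem says: it asserts $\mathrm{GL}(n)$-irreducibility of $\Val_i^{\pm}$, which by itself gives no control over $\SO(n)$-multiplicities. The multiplicity-one statement for spherical $\SO(n)$-types in $\Val_i^\infty$ is a separate (and substantially harder) computation, carried out by Alesker--Bernig--Schuster via branching rules for the $\mathrm{GL}(n)$-module $\Val_i^\infty$ restricted to $\SO(n)$. Without that input your surjectivity argument collapses: Schur's lemma only tells you that $T$ maps each spherical harmonic space into the corresponding isotypic component, but if that component had multiplicity greater than one, $T$ could miss part of it.

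A second, softer issue is smoothness: your sketch for why $T(f)\in\Val_i^\infty$ (as opposed to merely $\Val_i$) is vague. The mixed-discriminant formula for $S_i$ only applies to bodies with $C^2$ support function, and ``extending by continuity'' does not by itself show that $g\mapsto g\cdot T(f)$ is $C^\infty$ as a map $\mathrm{GL}(n)\to\Val_i$. The standard route is to write a smooth $f$ as a difference of support functions of smooth convex bodies, so that $T(f)$ becomes a difference of mixed volumes with smooth reference bodies, which are known to be smooth valuations. You should make this explicit.
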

	
	We call the function $f_{\varphi}$ the \emph{generating function} of $\varphi$. As was already pointed out in the introduction, the Lefschetz derivation operator acts on the generating function as a multiple of the identity. That is, $f_{\Lambda\varphi}= i f_{\varphi}$ for $\varphi\in \Val_i^{\infty,{\mathrm{sph}}}$,
	which is a simple consequence of the Steiner formula for area measures. For this reason, we set the operator~$\Lambda$ aside and turn to the Lefschetz integration operator~$\LL$.
	
	As a direct consequence of \cref{gen_fct}, there exists a unique $\SO(n)$-equivariant endomorphism $\mathrm{T}_i$ on the topological vector space $C^\infty_o(\S^{n-1})$ such that 
	$f_{\LL\varphi}=\mathrm{T}_if_{\varphi}$
	for $\varphi\in \Val_i^{\infty,{\mathrm{sph}}}$. By setting 
	$\rho_i=\mathrm{T}_i'(\Id-\pi_1)\delta_{e_n}$,
	where $\mathrm{T}_i'$ is the continuous transpose map of $\mathrm{T}_i$ on $C^{-\infty}_o(\S^{n-1})$, one obtains the following.
	
	\begin{cor}
		For $1\leq i< n-1$, there exists a unique centered, zonal distribution $\rho_i\in C^{-\infty}(\S^{n-1})$ such that for all $\varphi\in\Val_i^{\infty,{\mathrm{sph}}}$,
		\begin{equation*}
			f_{\LL\varphi}
			= f_{\varphi} \ast \rho_i.
		\end{equation*}
	\end{cor}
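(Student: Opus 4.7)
The plan is to verify directly that the distribution $\rho_i := \mathrm{T}_i'(\Id - \pi_1)\delta_{e_n}$, introduced just above the statement, has all the required properties. This splits into checking zonality and centeredness, establishing the convolution identity, and a short uniqueness argument; no new machinery is needed beyond the description of spherical convolution and the definition of $\mathrm{T}_i$.

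First I would verify that $\rho_i$ is a centered, zonal distribution. The point evaluation $\delta_{e_n}$ is fixed by every rotation in $\SO(n-1)$, the stabilizer of $e_n$, and the orthogonal projection $\pi_1$ commutes with the $\SO(n)$-action since the space of linear functions is $\SO(n)$-invariant. Hence $(\Id - \pi_1)\delta_{e_n}$ is $\SO(n-1)$-invariant. Since $\mathrm{T}_i$ is $\SO(n)$-equivariant by construction, its continuous transpose $\mathrm{T}_i' : C^{-\infty}_o(\S^{n-1}) \to C^{-\infty}_o(\S^{n-1})$ is as well, and therefore $\rho_i$ is $\SO(n-1)$-invariant, i.e.\ zonal. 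Centeredness is automatic because the target of $\mathrm{T}_i'$ is $C^{-\infty}_o(\S^{n-1})$.

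Next I would establish the convolution identity. Fix $\varphi \in \Val_i^{\infty,\mathrm{sph}}$ with generating function $f := f_\varphi \in C^\infty_o(\S^{n-1})$, so that $f_{\LL\varphi} = \mathrm{T}_i f$ by the definition of $\mathrm{T}_i$. Given $u \in \S^{n-1}$, pick $\vartheta \in \SO(n)$ with $\vartheta e_n = u$. From the standard description of spherical convolution on $\S^{n-1} = \SO(n)/\SO(n-1)$ recalled in \cref{sec:convol}, for a zonal distribution $\rho$ and a smooth $f$ one has $(f \ast \rho)(u) = \langle \vartheta^{-1} f, \rho\rangle_{\S^{n-1}}$. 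Specializing to $\rho = \rho_i$ and using the transpose relation together with the $\SO(n)$-equivariance of $\mathrm{T}_i$, I compute
\begin{align*}
(f \ast \rho_i)(u)
&= \langle \vartheta^{-1} f,\, \mathrm{T}_i'(\Id - \pi_1)\delta_{e_n}\rangle_{\S^{n-1}}
= \langle \mathrm{T}_i(\vartheta^{-1} f),\, (\Id - \pi_1)\delta_{e_n}\rangle_{\S^{n-1}} \\
&= \langle \vartheta^{-1}\mathrm{T}_i f,\, \delta_{e_n}\rangle_{\S^{n-1}}
= (\mathrm{T}_i f)(u),
\end{align*}
where the penultimate equality uses that $\vartheta^{-1}\mathrm{T}_i f$ lies in $C^\infty_o(\S^{n-1})$, so it pairs to zero with $\pi_1\delta_{e_n}$. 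This yields the desired identity $f_{\LL\varphi} = f_{\varphi}\ast\rho_i$.

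For uniqueness, suppose $\rho_i'$ is another centered zonal distribution satisfying the same identity. Since $\varphi \mapsto f_\varphi$ is an isomorphism of $\Val_i^{\infty,\mathrm{sph}}$ onto $C^\infty_o(\S^{n-1})$ by \cref{gen_fct}, the hypothesis gives $f \ast (\rho_i - \rho_i') = 0$ for every $f \in C^\infty_o(\S^{n-1})$. Evaluating at $e_n$ (i.e.\ taking $\vartheta = \Id$ in the convolution formula above), $\langle f, \rho_i - \rho_i'\rangle_{\S^{n-1}} = 0$ for all $f \in C^\infty_o(\S^{n-1})$; since $\rho_i - \rho_i'$ is itself centered, this forces $\rho_i = \rho_i'$. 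The only delicate point in the whole argument is the translation between the transpose of $\mathrm{T}_i$ and the spherical convolution, which is the standard bridge between $\SO(n)$-equivariant operators and convolution kernels on the homogeneous space $\S^{n-1} = \SO(n)/\SO(n-1)$ once one identifies zonal distributions with right-$\SO(n-1)$-invariant distributions on $\SO(n)$.
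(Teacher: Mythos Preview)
Your proof is correct and follows precisely the approach the paper has in mind: the paper merely introduces $\rho_i = \mathrm{T}_i'(\Id-\pi_1)\delta_{e_n}$ in the sentence preceding the corollary and states the result without a formal proof, so you have faithfully spelled out the details that the paper leaves implicit. The verification of zonality, the chain of equalities reducing $(f\ast\rho_i)(u)$ to $(\mathrm{T}_i f)(u)$ via equivariance and the transpose relation, and the uniqueness argument are all in order.
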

	
	Interestingly enough, all the information that is needed to determine and subsequently describe the distribution $\rho_i$ can be obtained from the action of $\LL$ on the mean section operators. For $1\leq i< n-1$.
	\begin{equation*}
		\LL{\tilde{\MSO}}_j
		= \frac{\flag{j}{j-1}}{\flag{n}{n-1}} \tilde{\MSO}_{j-1},
	\end{equation*}
	where $i+j=n+1$, as was shown in \cite{Schuster2015}. By approximating the associated real valued valuations of the centered mean section operators with smooth valuations, it follows that
	\begin{equation}\label{eq:LL_MSO_gen_fct}
		f_{\tilde{\MSO}_j} \ast \rho_i
		= \frac{\flag{j}{j-1}}{\flag{n}{n-1}} f_{\tilde{\MSO}_{j-1}}.
	\end{equation}
	
	In the following, we use methods from harmonic analysis on $\S^{n-1}$ (see \cref{sec:harmonics}) to determine the spherical harmonic expansion of $\rho_i$. To that end, recall that by \cref{eq:MSO_Berg_fct}, the centered mean section operators are generated by a constant multiple of the centered Berg functions. Their multipliers were explicitly computed in \cite{Bernig2018} and \cite{Berg2018} independently: for $2\leq j\leq n$ and $k\neq 1$,
	\begin{equation*}
		a^n_k[g_j]
		= - \frac{\pi^{\frac{n-j}{2}}(j-1)}{4} \frac{\Gamma\big(\frac{n-j+2}{2}\big)\Gamma\big(\frac{k-1}{2}\big)\Gamma\big(\frac{k+j-1}{2}\big)}{\Gamma\big(\frac{k+n-j+1}{2}\big)\Gamma\big(\frac{k+n+1}{2}\big)}.
	\end{equation*}
	These numbers are all non-zero, and thus, by a simple division, for all $k\neq 1$,
	\begin{equation*}
		a^n_k[\rho_i]
		= \frac{\flag{j}{j-1}}{\flag{n}{n-1}} \frac{a^n_k[f_{\tilde{\MSO}_{j-1}}]}{a^n_k[f_{\tilde{\MSO}_j}]}
		%= \frac{\flag{j}{j-1}}{\flag{n}{n-1}} \frac{m_{n,j-1}}{m_{n,j}} \frac{a^n_k[g_{j-1}]}{a^n_k[g_j]}
		= c_{n,i} \frac{\Gamma\big(\frac{k+i}{2}\big)\Gamma\big(\frac{k+n-i-1}{2}\big)}{\Gamma\big(\frac{k+i+1}{2}\big)\Gamma\big(\frac{k+n-i}{2}\big)},
	\end{equation*}
	where $i+j=n+1$ and the constant $c_{n,i}>0$ only depends on $n$ and $i$. The above was already done by Berg, Parapatits, Schuster, and Weberndorfer~\cite{Berg2018}. Our contribution will be to extract the information of \cref{rho_description:intro} from the spherical harmonic expansion of $\rho_i$.

	%%%%%%%%%%%%%%%%%%%%%%%%%%%%%%%%%%%%%%%%%%%%%%%%%%%%%%%%%%%%%%%%%%%%%%%%%%%
	\subsection{A Legendre type differential equation}
	\label{sec:rho_ODE}
	%%%%%%%%%%%%%%%%%%%%%%%%%%%%%%%%%%%%%%%%%%%%%%%%%%%%%%%%%%%%%%%%%%%%%%%%%%%

	Our aim is now to establish the differential equation~\cref{eq:rho_ODE:intro}. To that end, it is convenient to renormalize $\rho_i$ and manipulate its linear part: We define an auxiliary distribution $\tilde{\rho}_i\in C^{-\infty}(\S^{n-1})$ in terms of its multipliers by
	\begin{equation}\label{eq:rho_n.i}
		a^n_k[\tilde{\rho}_i]
		= \frac{\Gamma\big(\frac{k+i}{2}\big)\Gamma\big(\frac{k+n-i-1}{2}\big)}{\Gamma\big(\frac{k+i+1}{2}\big)\Gamma\big(\frac{k+n-i}{2}\big)},
		\qquad k\geq 0.
	\end{equation}
	
	Next, note that we can lift every test function $\psi\in\D(-1,1)$ to a smooth function $\psi(\langle e_n,{}\cdot{}\rangle)\in C^\infty(\S^{n-1})$. In the following, we let
	\begin{equation*}
		I_{\ast}
		= \{(n,i)\in\N\times\N : n\geq 3,\ 1\leq i< n-1\}
	\end{equation*}
	denote the set of admissible index pairs. Then we define a family of distributions ${\rho}_{n,i}\in\D'(-1,1)$, where $(n,i)\in I_\ast$, by
	\begin{equation*}
		\langle \psi , \rho_{n,i} \rangle
		= \langle \psi(\langle e_n,{}\cdot{}\rangle) , \tilde{\rho}_i \rangle,
		\qquad \psi\in\D(-1,1).
	\end{equation*}
	This construction on the interval $(-1,1)$ will allow us to relate distributions $\tilde{\rho}_i$ that live on spheres $\S^{n-1}$ of different dimensions.
	
	We continue with two simple, yet crucial observations about the family $\rho_{n,i}$ that are immediate from their definition and the fact that every distribution is uniquely determined by its sequence of multipliers. Firstly, for $(n,i)\in I_\ast$,
	\begin{equation}\label{eq:rho_index_reflection}
		\rho_{n,i}
		= \rho_{n,n-i-1}.
	\end{equation}
	Secondly, we compare the multipliers of ${\rho}_{n,n-2}$ with those of $g_{n-1}$ and the \emph{box operator} $\square_n=\frac{1}{n-1}\Delta_{\S^{n-1}}+\Id$ (see~\cref{sec:harmonics}). Its action on zonal functions (see, e.g., \cite[Lemma~5.3]{OrtegaMoreno2021}) can be described in terms of the differential operator
	\begin{equation*}
		\overline{\square}_n
		= \frac{1}{n-1}(1-t^2)\frac{d^2}{dt^2} - t\frac{d}{dt} + \Id.
	\end{equation*}
	Namely, for all test functions $\psi\in\D(-1,1)$, we have that
	\begin{equation*}
		[\square_n\psi(\langle e_n,{}\cdot{}\rangle)](u)
		= (\overline{\square}_n\psi)(\langle e_n,u\rangle),
		\qquad u\in\S^{n-1}.
	\end{equation*}
	In this way, for all $n\geq 3$, we obtain that the distribution $\rho_{n,n-2}$ is a $C^\infty(-1,1)$ function and that for all $t\in (-1,1)$,
	\begin{equation}\label{eq:rho_n.n-2}
		\rho_{n,n-2}(t)
		= \frac{n-1}{2(n-2)} \overline{\square}_n g_{n-1}(t) + \frac{n}{2\omega_{n-1}}t.
	\end{equation}
	
	Next, we establish the smoothness of the distributions $\rho_{n,i}$ on $(-1,1)$, their behavior at the end points, and a remarkably simple recurrence relation.
	
	\begin{lem} \label{rho_smooth_recurrence}
		Let $(n,i)\in I_\ast$.
		\begin{enumerate}[label=\upshape(\roman*)]
			\item \label{rho_smooth}
			$\rho_{n,i}\in C^\infty(-1,1)$ and for all $m\geq 0$,
			\begin{align}
				\begin{split}\label{eq:rho_t->+-1}
					\lim_{t\to -1} (1-t^2)^{\frac{n-2}{2} + m} \rho_{n,i}^{(m)}(t)
					&= 0, \\
					\lim_{t \to +1} (1-t^2)^{\frac{n-2}{2} + m} \rho_{n,i}^{(m)}(t)
					&= 2^{m-2} \frac{\Gamma\big(\frac{n-2}{2} + m\big)}{\pi^{\frac{n-2}{2}}}.
				\end{split}
			\end{align}
			\item \label{rho_recurrence}
			For all $t\in (-1,1)$,
			\begin{equation} \label{eq:rho_recurrence}
				\rho_{n+2,i+1}(t)
				= \frac{1}{2\pi} \rho_{n,i}'(t).
			\end{equation}			
		\end{enumerate}
	\end{lem}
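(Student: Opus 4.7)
The plan is to prove part~(ii) first via spherical harmonic multipliers, and then to derive part~(i) from it together with the explicit base case in~\eqref{eq:rho_n.n-2} and the Berg asymptotics of \cref{g_t->+-1}.

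For part~(ii), the explicit multiplier formula~\eqref{eq:rho_n.i} immediately yields
\[
a^n_{k+1}[\tilde\rho_i] = \frac{\Gamma\bigl(\frac{k+i+1}{2}\bigr)\Gamma\bigl(\frac{k+n-i}{2}\bigr)}{\Gamma\bigl(\frac{k+i+2}{2}\bigr)\Gamma\bigl(\frac{k+n-i+1}{2}\bigr)} = a^{n+2}_k[\tilde\rho_{i+1}]
\]
for every $k \geq 0$. It therefore suffices to show that the multipliers on $\S^{n+1}$ of the zonal distribution obtained by lifting $\rho'_{n,i}$ equal $2\pi$ times $a^n_{k+1}[\tilde\rho_i]$. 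This is accomplished via a Funk--Hecke integral followed by integration by parts in~$t$, which moves the derivative onto the Gegenbauer polynomial. The required identity
\[
\frac{d}{dt}\bigl[(1-t^2)^{(n-1)/2}\, C^{n/2}_k(t)\bigr] = -\frac{(k+1)(k+n-1)}{n-2}(1-t^2)^{(n-3)/2}\, C^{(n-2)/2}_{k+1}(t)
\]
follows from the Gegenbauer differential equation combined with the standard relation $\frac{d}{dt} C^\lambda_k = 2\lambda C^{\lambda+1}_{k-1}$. The resulting integral is precisely a Funk--Hecke integral expressing $a^n_{k+1}[\tilde\rho_i]$ on $\S^{n-1}$, and the accumulated prefactor simplifies, via $C^\lambda_k(1) = \Gamma(k+2\lambda)/(k!\,\Gamma(2\lambda))$ and $\omega_{n+1}/\omega_{n-1} = 2\pi/(n-1)$, to exactly $2\pi$. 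Since a zonal distribution on $\S^{n+1}$ is determined by its multipliers, we conclude $\rho_{n+2,\,i+1} = \frac{1}{2\pi}\rho'_{n,i}$ in $\D'(-1,1)$.

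For part~(i), the base case is $\rho_{n, n-2}$. Equation~\eqref{eq:rho_n.n-2} writes it as an explicit linear combination of $(1-t^2) g_{n-1}''(t)$, $t g_{n-1}'(t)$, $g_{n-1}(t)$, and~$t$, each smooth on $(-1,1)$. At $t \to 1$, all three Berg contributions are individually singular of order $(1-t^2)^{-(n-2)/2}$ by \cref{g_t->+-1} with $j = n - 1$, but the leading coefficients conspire---via $\Gamma(n/2) = \frac{n-2}{2}\Gamma\bigl(\frac{n-2}{2}\bigr)$---to yield precisely the claimed limit $\frac{1}{4}\Gamma\bigl(\frac{n-2}{2}\bigr)/\pi^{(n-2)/2}$. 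At $t \to -1$ all Berg contributions vanish by \cref{g_t->+-1}. Higher-order derivatives are handled analogously with larger $m$. By the symmetry~\eqref{eq:rho_index_reflection} the same conclusions hold for $\rho_{n, 1} = \rho_{n, n-2}$. For a general $(N, I) \in I_\ast$, \eqref{eq:rho_index_reflection} reduces us to $I \leq (N-1)/2$; setting $n = N - 2(I-1) \geq 3$ and iterating~(ii) we obtain
\[
\rho_{N, I} = (2\pi)^{-(I-1)} \rho^{(I-1)}_{n, 1},
\]
so the smoothness of $\rho_{N, I}$ and the endpoint asymptotics of $\rho^{(m)}_{N,I}$ reduce to those of $\rho^{(I-1+m)}_{n, 1}$.

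The principal technical obstacle is the constant-matching in the Funk--Hecke computation of~(ii): a careful accounting of the ratios $\Gamma(k+2\lambda)/(k!\,\Gamma(2\lambda))$ and $\omega_{n+1}/\omega_{n-1}$ is required for all $k$-dependent factors to cancel, leaving exactly~$2\pi$. A secondary difficulty lies in the delicate cancellation at the base case of~(i): each of the three Berg-derivative contributions in~\eqref{eq:rho_n.n-2} is individually singular of the same order $(1-t^2)^{-(n-2)/2}$, and only the particular combination prescribed by~$\overline{\square}_n$, together with the $\Gamma$-recursion, produces the finite limit of the stated form.
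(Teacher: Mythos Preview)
Your proposal has a genuine circularity in the order of the two parts. You want to prove~(ii) first ``via a Funk--Hecke integral followed by integration by parts in~$t$'', but the Funk--Hecke formula~\eqref{eq:multipliers_interval} is an \emph{integral} formula for $L^1$ functions, and the integration by parts (the content of \cref{multiplier_diff}) requires $g\in C^1(-1,1)$ with $(1-t^2)^{(n-1)/2}g'(t)$ integrable. At the moment you invoke these, $\rho_{n,i}$ is only known to be a distribution in $\D'(-1,1)$: the lift of $\rho'_{n,i}$ to a zonal distribution on $\S^{n+1}$ is not even defined, since zonal test functions on $\S^{n+1}$ restrict to profiles in $C^\infty[-1,1]$, not in $\D(-1,1)$. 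Your multiplier identity $a^n_{k+1}[\tilde\rho_i]=a^{n+2}_k[\tilde\rho_{i+1}]$ is immediate and correct, but it connects distributions living on spheres of different dimensions; transferring it to the identity $\rho_{n+2,i+1}=\frac{1}{2\pi}\rho'_{n,i}$ on $(-1,1)$ is precisely where the regularity of $\rho_{n,i}$ is needed.

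The paper's proof avoids this by intertwining the two parts in the induction of \cref{rho_induction_scheme}: at each level one first establishes~(i) (smoothness and the endpoint asymptotics~\eqref{eq:rho_t->+-1}), and only then are the hypotheses of \cref{multiplier_diff} met, yielding~(ii) at that level; the recurrence~(ii) in turn propagates~(i) to $(n+2,i+1)$. Your treatment of the base case $(n,n-2)$ and of the inductive step for~(i) via iterated~(ii) are essentially the paper's argument and are fine once the sequencing is fixed. One small inaccuracy: in the base case you say ``all three Berg contributions are individually singular of order $(1-t^2)^{-(n-2)/2}$''. In fact only the two derivative terms $(1-t^2)g_{n-1}''$ and $tg_{n-1}'$ carry that order; the undifferentiated $g_{n-1}$ term scales like $(1-t^2)^{-(n-4)/2}$ and contributes nothing to the limit. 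The cancellation you describe is between the first two terms only, and your $\Gamma$-identity is what makes it come out to $\tfrac14\Gamma(\tfrac{n-2}{2})/\pi^{(n-2)/2}$.
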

	
	In order to show the recurrence relation \cref{eq:rho_recurrence}, we need the following multiplier relation, which we prove in~\cref{sec:harmonics}.
	
	\begin{lem}\label{multiplier_diff}
		If $n\geq 3$ and $g\in C^1(-1,1)$ is such that $(1-t^2)^{\frac{n-1}{2}} g'(t)$ is integrable on $(-1,1)$, then $(1-t^2)^{\frac{n-3}{2}}g(t)$ is integrable on $(-1,1)$ and for all $k\geq 0$,
		\begin{equation}\label{eq:multiplier_diff}
			a^{n+2}_k[g']
			= 2\pi a^n_{k+1}[g].
		\end{equation}
	\end{lem}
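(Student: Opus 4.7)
The plan is to deduce both assertions from the Funk--Hecke formula together with a single integration by parts. In the standard convention, one has
\begin{equation*}
a^n_k[g] = \frac{\omega_{n-1}}{C_k^{(n-2)/2}(1)}\int_{-1}^1 g(t)\, C_k^{(n-2)/2}(t)\, (1-t^2)^{(n-3)/2}\, dt,
\end{equation*}
so the identity $a^{n+2}_k[g'] = 2\pi a^n_{k+1}[g]$ reduces to an IBP relation between the weight $(1-t^2)^{(n-1)/2} C_k^{n/2}(t)$ appearing in $a^{n+2}_k$ and the weight $(1-t^2)^{(n-3)/2} C_{k+1}^{(n-2)/2}(t)$ appearing in $a^n_{k+1}$. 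The algebraic input is the following Gegenbauer identity, which can be derived from Rodrigues' formula: for $\lambda > 1$,
\begin{equation*}
\frac{d}{dt}\bigl[(1-t^2)^{\lambda - 1/2} C_k^{\lambda}(t)\bigr] = -\frac{(k+1)(k+2\lambda-1)}{2\lambda-2}\, (1-t^2)^{\lambda - 3/2}\, C_{k+1}^{\lambda - 1}(t).
\end{equation*}
For $\lambda = n/2$ (well-defined since $n \geq 3$), this identifies $(1-t^2)^{(n-3)/2}C_{k+1}^{(n-2)/2}(t)$ as a scalar multiple of the derivative of $(1-t^2)^{(n-1)/2} C_k^{n/2}(t)$, with scalar $-\frac{(k+1)(k+n-1)}{n-2}$.

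Two analytic details remain. First, to justify the IBP on $(-1 + \varepsilon, 1 - \varepsilon)$ and the passage to the limit $\varepsilon \to 0$, I would show that $(1-t^2)^{(n-1)/2} g(t) \to 0$ as $t \to \pm 1$. Writing $g(t) - g(0) = \int_0^t h(s)(1-s^2)^{-(n-1)/2}\, ds$ with $h(s) = (1-s^2)^{(n-1)/2} g'(s) \in L^1(-1,1)$, splitting the integral at $t_0 \in (0,1)$, and using the monotonicity of $(1-s^2)^{-(n-1)/2}$ on $[0,t]$, one arrives at
\begin{equation*}
(1-t^2)^{(n-1)/2}|g(t)-g(0)| \leq (1-t^2)^{(n-1)/2}(1-t_0^2)^{-(n-1)/2}\|h\|_{L^1(-1,1)} + \int_{t_0}^{1}|h(s)|\,ds;
\end{equation*}
the first term vanishes as $t \to 1$ for fixed $t_0$, while the second is arbitrarily small by absolute continuity. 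Second, for the integrability of $(1-t^2)^{(n-3)/2}g(t)$, I would apply Fubini's theorem combined with the elementary estimate $\int_s^1 (1-t^2)^{(n-3)/2}\, dt \leq \frac{2^{(n-1)/2}}{n-1}(1-s)^{(n-1)/2}$, which yields
\begin{equation*}
\int_0^1 (1-t^2)^{(n-3)/2}|g(t) - g(0)|\,dt \leq \tfrac{2^{(n-1)/2}}{n-1}\|h\|_{L^1(-1,1)},
\end{equation*}
with an analogous estimate on $(-1,0)$.

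Finally, the constants work out as follows: the IBP contributes a factor $\frac{(k+1)(k+n-1)}{n-2}$ from the Gegenbauer identity, and the ratio of Funk--Hecke normalizations $\omega_{n+1} C_{k+1}^{(n-2)/2}(1)/[\omega_{n-1} C_k^{n/2}(1)]$ simplifies, using the explicit form of $C_k^{\lambda}(1)$, to $\frac{\omega_{n+1}}{\omega_{n-1}} \cdot \frac{(n-1)(n-2)}{(k+1)(k+n-1)}$. Combining these with the elementary identity $\omega_{n+1}/\omega_{n-1} = 2\pi/(n-1)$ gives exactly $2\pi$, as required. The main obstacle is the rigorous handling of the boundary behavior of $g$; once this is settled, the single Gegenbauer identity delivers both the integrability statement and the multiplier identity uniformly.
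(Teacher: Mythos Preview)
Your proof is correct and follows essentially the same route as the paper: Funk--Hecke, one integration by parts, and a Rodrigues-type identity linking $(1-t^2)^{(n-1)/2}$ with dimension-$(n+2)$ polynomials to $(1-t^2)^{(n-3)/2}$ with dimension-$n$ polynomials. The paper works with the normalized Legendre polynomials $P^n_k$ and reads the derivative identity directly off Rodrigues' formula, whereas you use the Gegenbauer normalization $C_k^{\lambda}$; this is purely a bookkeeping difference.

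The one substantive distinction is in the analytic justification. The paper outsources both the integrability of $(1-t^2)^{(n-3)/2}g$ and the vanishing of boundary terms to a black-box lemma from~\cite{Brauner2023} (their \cref{alpha_int_by_parts}), stated for distributions whose weighted derivative is a measure. You instead prove both facts from scratch for $g\in C^1$: your splitting-at-$t_0$ argument for $(1-t^2)^{(n-1)/2}g(t)\to 0$ and your Fubini estimate for integrability are clean, elementary, and exactly what is needed here. Your version is thus more self-contained; the paper's is shorter but relies on an external reference of broader scope than required.
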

	
	We want to point out that \cref{eq:rho_index_reflection}, \cref{eq:rho_n.n-2}, and \cref{eq:rho_recurrence} together result in a complete recursion such that every $\rho_{n,i}$ can be traced back to the Berg functions. In this way, we will obtain many of the desired properties inductively. The following lemma provides the necessary induction scheme on $I_\ast$; the statement is obvious, so we omit the proof.
	
	\pagebreak
	
	\begin{lem}\label{rho_induction_scheme}
		Let $I\subseteq I_{\ast}$ and suppose that for all $(n,i)\in I_\ast$, the following holds:
		\begin{itemize}
			\item $(n,n-2)\in I$,
			\item $(n,i)\in I$ if and only if $(n,n-i-1)\in I$,
			\item whenever $(n,i)\in I$, then also $(n+2,i+1)\in I$.
		\end{itemize}
		Then $I=I_{\ast}$.
	\end{lem}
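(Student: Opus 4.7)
The plan is to induct on $n$, but in steps of two, treating the two parities separately. The reason is that the only mechanism among the three hypotheses for increasing $n$ is the third bullet, which takes us from level $n$ to level $n+2$; hence both $n=3$ and $n=4$ must be handled as base cases.

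For the base cases: when $n=3$, the only element of $I_\ast$ at this level is $(3,1)=(3,n-2)$, which lies in $I$ by the first hypothesis. When $n=4$, the first hypothesis gives $(4,2)=(4,n-2)\in I$, and applying the symmetry in the second hypothesis to $(4,2)$ produces $(4,1)\in I$; these two pairs exhaust $I_\ast$ at level $n=4$.

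For the inductive step, assume every $(n,i)\in I_\ast$ lies in $I$, and fix a pair $(n+2,i)\in I_\ast$, so $1\leq i\leq n$. The generic case is $2\leq i\leq n-1$, in which $(n,i-1)\in I_\ast$, hence $(n,i-1)\in I$ by the inductive hypothesis, and the third hypothesis lifts it to $(n+2,i)\in I$. The two extreme cases $i=n$ and $i=1$ are handled directly from the first two hypotheses without recourse to induction: the first hypothesis gives $(n+2,n)=(n+2,(n+2)-2)\in I$, and then the symmetry in the second hypothesis gives $(n+2,1)=(n+2,(n+2)-n-1)\in I$. Combining these cases covers all admissible $i$ at level $n+2$, completing the induction.

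The argument is essentially book-keeping and presents no genuine obstacle; the only subtlety worth flagging is the need for two base cases, which is forced by the fact that the third hypothesis preserves the parity of $n$.
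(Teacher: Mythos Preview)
Your argument is correct. The paper actually omits the proof of this lemma entirely, stating only that ``the statement is obvious, so we omit the proof''; your double induction on the parity of $n$ with the symmetry condition handling the endpoint $i=1$ is exactly the natural way to carry this out, and your case analysis is complete.
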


	\begin{proof}[Proof of \cref{rho_smooth_recurrence}]
		First, we show that for some fixed $(n,i)\in I_\ast$, the assertion of \cref{rho_smooth} implies the assertion of \cref{rho_recurrence}. For this purpose, suppose that $\rho_{n,i}\in C^\infty(-1,1)$ and satisfies \cref{eq:rho_t->+-1}. Then the requirements of \cref{multiplier_diff} are met, and thus, \cref{eq:multiplier_diff} shows that for all $k\geq 0$,
		\begin{equation*}\label{eq:rho_smooth_recurrence:proof}
			\frac{1}{2\pi}a^{n+2}_k[\rho_{n,i}']
			= a^n_{k+1}[\rho_{n,i}]
			= \frac{\Gamma\big(\frac{k+i+1}{2}\big)\Gamma\big(\frac{k+n-i}{2}\big)}{\Gamma\big(\frac{k+i+2}{2}\big)\Gamma\big(\frac{k+n-i+1}{2}\big)}
			= a^{n+2}_k[\rho_{n+2,i+1}].
		\end{equation*}
		This proves \cref{eq:rho_recurrence} in the sense of distributions. Since we assumed $\rho_{n,i}$ to be smooth, it follows that $\rho_{n+2,i+1}$ is also smooth, and thus, identity \cref{eq:rho_recurrence} holds pointwise.
		
		We will now prove \cref{rho_smooth_recurrence} through the induction scheme from above. To that end, denote by $I \subseteq I_\ast$ the set of admissible index pairs for which \cref{rho_smooth_recurrence} (or equivalently, \cref{rho_smooth}) is true. Due to \cref{eq:rho_index_reflection}, we have that $(n,i)\in I$ if and only if $(n,n-i-1)\in I$. From \cref{eq:rho_n.n-2} and by applying the general Leibniz rule, it follows that $\rho_{n,n-2}\in C^\infty(-1,1)$ and for all $m\geq0$,
		\begin{multline*}
			\rho_{n,n-2}^{(m)}(t)
			= \frac{1}{2(n-2)}(1-t^2)g_{n-1}^{(m+2)}(t) - \frac{2m+n-1}{2(n-2)}tg_{n-1}^{(m+1)}(t)\\ - \frac{(m+n-1)(m-1)}{2(n-2)}g_{n-1}^{(m)}(t) + \frac{n}{2\omega_{n-1}}\frac{d^mt}{dt^m}.
		\end{multline*}
		Now we multiply both sides with $(1-t^2)^{\frac{n-2}{2}+m}$, pass to the limit $t\to \pm 1$, and employ \cref{eq:g_t->+-1}. This verifies \cref{eq:rho_t->+-1} in the instance where $i=n-2$ for all $m\geq 0$, and thus, $(n,n-2)\in I$. 
		
		Suppose now that $(n,i)\in I$. Then, as we have established at the beginning of the proof, $\rho_{n+2,i+1}\in C^\infty(-1,1)$ and for all $m\geq 0$,
		\begin{align*}
			&\lim_{t\to+1} (1-t^2)^{\frac{n}{2}+m}\rho_{n+2,i+1}^{(m)}(t)
			= \frac{1}{2\pi}\lim_{t\to +1} (1-t^2)^{\frac{n-2}{2}+(m+1)}\rho_{n,i}^{(m+1)}(t) \\
			&\qquad = \frac{1}{2\pi} 2^{(m+1)-2} \frac{\Gamma\big(\frac{n-2}{2} + (m+1)\big)}{\pi^{\frac{n-2}{2}}}
			= 2^{m-2}\frac{\Gamma\big(\frac{n}{2}+m\big)}{\pi^{\frac{n}{2}}},
		\end{align*}
		where we applied the appropriate instance of recurrence relation~\cref{eq:rho_recurrence}. Similarly,
		\begin{equation*}
			\lim_{t\to-1} (1-t^2)^{\frac{n}{2}+m}\rho_{n+2,i+1}^{(m)}(t)
			= \frac{1}{2\pi}\lim_{t\to -1} (1-t^2)^{\frac{n-2}{2}+(m+1)}\rho_{n,i}^{(m+1)}(t)
			= 0,
		\end{equation*}
		and thus, we obtain that also $(n+2,i+1)\in I$. \cref{rho_induction_scheme} now yields $I=I_\ast$, which concludes the argument.
	\end{proof}
	
	We now establish the Legendre type differential equation~\cref{eq:rho_ODE:intro}.
	
	\begin{thm}\label{rho_ODE}
		Let $(n,i)\in I_\ast$. Then for all $t\in (-1,1)$,
		\begin{equation}\label{eq:rho_ODE}
			(1-t^2)\rho_{n,i}''(t) - n t \rho_{n,i}'(t) - i(n-i-1)\rho_{n,i}(t)
			= 0.
		\end{equation}
	\end{thm}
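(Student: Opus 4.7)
The strategy is to apply the induction scheme of \cref{rho_induction_scheme} to the set
\begin{equation*}
    I = \{(n,i)\in I_\ast : \rho_{n,i} \text{ satisfies \cref{eq:rho_ODE} on } (-1,1)\}.
\end{equation*}
Since \cref{rho_smooth_recurrence}~\ref{rho_smooth} provides that $\rho_{n,i}\in C^\infty(-1,1)$, we may work pointwise throughout. The symmetry condition of \cref{rho_induction_scheme} is immediate because $i(n-i-1)$ is invariant under $i\leftrightarrow n-i-1$, so \cref{eq:rho_ODE} has the same form for $(n,i)$ and $(n,n-i-1)$, and \cref{eq:rho_index_reflection} takes care of the rest.

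For the inductive step, suppose that $(n,i)\in I$ and differentiate \cref{eq:rho_ODE} once. The result is
\begin{equation*}
    (1-t^2)\rho_{n,i}'''(t) - (n+2)t\rho_{n,i}''(t) - \bigl[n+i(n-i-1)\bigr]\rho_{n,i}'(t) = 0.
\end{equation*}
By the recurrence \cref{eq:rho_recurrence}, the function $\sigma := \rho_{n+2,i+1} = \frac{1}{2\pi}\rho_{n,i}'$ therefore satisfies $(1-t^2)\sigma''(t) - (n+2)t\sigma'(t) - [n+i(n-i-1)]\sigma(t) = 0$. One checks algebraically that $n + i(n-i-1) = (i+1)(n-i)$, which is precisely the zero-order coefficient of the ODE \cref{eq:rho_ODE} for $(n+2,i+1)$. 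Hence $(n+2,i+1)\in I$.

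The main obstacle is the base case $(n,n-2)\in I$, which requires explicit manipulation of \cref{eq:rho_n.n-2} together with Berg's equation \cref{eq:Berg_fct_ODE}. Writing $\phi := g_{n-1}$ and using Berg's ODE to eliminate $(1-t^2)\phi''$ in the expression for $\overline{\square}_n g_{n-1}$, one obtains
\begin{equation*}
    \rho_{n,n-2}(t) = \frac{1}{2(n-2)}\bigl[\phi(t) - t\phi'(t)\bigr] + \frac{t}{2\omega_{n-1}}.
\end{equation*}
Setting $\psi := \phi - t\phi'$, so that $\psi' = -t\phi''$ and $\psi'' = -\phi'' - t\phi'''$, one needs to verify that $(1-t^2)\psi'' - nt\psi' - (n-2)\psi = \frac{2(n-1)(n-2)}{\omega_{n-1}}t$, since this precisely cancels the remaining linear term. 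This identity follows by combining Berg's ODE with its derivative, which yields $(1-t^2)\phi''' = nt\phi'' - \frac{(n-1)(n-2)}{\omega_{n-1}}$. After substituting these two relations into the expansion of $(1-t^2)\psi'' - nt\psi' - (n-2)\psi$, every occurrence of $\phi$, $\phi'$, $\phi''$, and $\phi'''$ drops out and only the desired linear term remains. Combined with the inductive step, this completes the proof via \cref{rho_induction_scheme}.
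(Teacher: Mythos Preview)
Your proof is correct and follows the same overall architecture as the paper: the induction scheme of \cref{rho_induction_scheme}, the same symmetry argument via \cref{eq:rho_index_reflection}, and the same inductive step (your direct differentiation is exactly the paper's operator identity $D_{n+2,i+1}\circ\frac{d}{dt} = \frac{d}{dt}\circ D_{n,i}$ written out).

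The only genuine difference is the treatment of the base case $(n,n-2)$. The paper verifies by direct computation the operator identity $(n-1)D_{n,n-2}\overline{\square}_n = (n-2)D_{n+1,n-1}\overline{\square}_{n-1}$ and then applies Berg's ODE in the form $\overline{\square}_{n-1}g_{n-1}(t) = -\tfrac{n-1}{\omega_{n-1}}t$ to annihilate everything at once. You instead first use Berg's ODE to simplify $\rho_{n,n-2}$ to the compact expression $\tfrac{1}{2(n-2)}(\phi - t\phi') + \tfrac{t}{2\omega_{n-1}}$, and then verify the target ODE by combining Berg's ODE with its derivative. Both computations are of comparable length; the paper's is slightly more conceptual (a single fourth-order operator identity), whereas yours is more transparent about where each term comes from and avoids having to check a commutation relation whose verification is suppressed in the paper.
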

	\begin{proof}
		First, define a second order differential operator
		\begin{equation*}
			D_{n,i}
			= (1-t^2)\frac{d^2}{dt^2} - nt\frac{d}{dt} - i(n-i-1)\Id
		\end{equation*}
		and denote by $I$ the set of all index pairs $(n,i)\in I_\ast$ for which $D_{n,i}\rho_{n,i}=0$.		
		Due to \cref{eq:rho_index_reflection} and the fact that $D_{n,i}=D_{n,n-i-1}$, we have that $(n,i)\in I$ if and only if $(n,n-i-1)\in I$. Note that
		\begin{equation*}
			(n-1)D_{n,n-2}\overline{\square}_n
			= (n-2)D_{n+1,n-1}\overline{\square}_{n-1},
		\end{equation*}
		as can be shown by direct computation. Therefore, by applying \cref{eq:rho_n.n-2}, we obtain
%		\begin{align*}
%			D_{n,n-2}\rho_{n,n-2}(t)
%			&= \frac{n-1}{2(n-2)} D_{n,n-2}\overline{\square}_n g_{n-1}(t) + \frac{n}{2\omega_{n-1}} D_{n,n-2}t \\
%			&= \frac{1}{2} D_{n+1,n-1} \overline{\square}_{n-1}g_{n-1}(t) + \frac{n}{2\omega_{n-1}} D_{n,n-2}t \\
%			&= - \frac{n-1}{2\omega_{n-1}} D_{n+1,n-1}t + \frac{n}{2\omega_{n-1}} D_{n,n-2}t
%			= 0,
%		\end{align*}
		\begin{align*}
			& D_{n,n-2}\rho_{n,n-2}(t)
			= \frac{n-1}{2(n-2)} D_{n,n-2}\overline{\square}_n g_{n-1}(t) + \frac{n}{2\omega_{n-1}} D_{n,n-2}t \\
			&\qquad = \frac{1}{2} D_{n+1,n-1} \overline{\square}_{n-1}g_{n-1}(t) + \frac{n}{2\omega_{n-1}} D_{n,n-2}t \\
			&\qquad = - \frac{n-1}{2\omega_{n-1}} D_{n+1,n-1}t + \frac{n}{2\omega_{n-1}} D_{n,n-2}t
			= 0,
		\end{align*}
		where the third equality is due to \cref{eq:Berg_fct_ODE}. It follows that $(n,n-2)\in I$.
		
		Suppose now that $(n,i)\in I$. Note that $D_{n+2,i+1} \circ \frac{d}{dt}
		= \frac{d}{dt} \circ D_{n,i}$, as can again be shown by direct computation. By employing recurrence relation \cref{eq:rho_recurrence}, we get that for all $t\in(-1,1)$,
		\begin{equation*}
			D_{n+2,i+1}\rho_{n+2,i+1}(t)
			= \frac{1}{2\pi} D_{n+2,i+1}\rho_{n,i}'(t)
			= \frac{1}{2\pi} (D_{n,i}\rho_{n,i})'(t)
			= 0,
		\end{equation*}
		and thus, $(n+2,i+1)\in I$. By \cref{rho_induction_scheme}, we have that $I=I_\ast$.
	\end{proof}
	
	The differential equation \cref{eq:rho_ODE}, combined with the behavior at the end points has the following notable interpretation: If we lift $\rho_{n,i}$ to a zonal function on the unit sphere in $\R^{n+1}$, then it is the Green's function of some strictly elliptic Helmholtz type operator.
	
	\begin{cor}\label{rho_Green}
		Let $(n,i)\in I_\ast$. Then $\rho_{n,i}(\langle e_{n+1},{}\cdot{}\rangle) \in L^1(\S^n)$ and
		\begin{equation}\label{eq:rho_Green}
			(-\Delta_{\S^{n}} + i(n-i-1)\Id )\rho_{n,i}(\langle e_{n+1},{}\cdot{}\rangle)
			= \pi\delta_{e_{n+1}}
		\end{equation}
		in the sense of distributions on $\S^n$.
	\end{cor}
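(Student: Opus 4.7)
The plan is to combine the ODE of \cref{rho_ODE} with the sharp endpoint asymptotics \cref{eq:rho_t->+-1} from \cref{rho_smooth_recurrence}. For the $L^1$ statement, passing to polar coordinates centered at $e_{n+1}$ and substituting $t = \cos\theta$ reduces the norm to
\begin{equation*}
\int_{\S^n}|\rho_{n,i}(\langle e_{n+1},u\rangle)|\,du = \omega_n\int_{-1}^1|\rho_{n,i}(t)|(1-t^2)^{(n-2)/2}\,dt,
\end{equation*}
and the $m=0$ case of \cref{eq:rho_t->+-1} shows that the integrand $(1-t^2)^{(n-2)/2}\rho_{n,i}(t)$ extends continuously to $[-1,1]$, hence is bounded.

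For the distributional identity, both sides of \cref{eq:rho_Green} are invariant under the stabilizer of $e_{n+1}$ in $\SO(n+1)$, so after averaging test functions it suffices to pair them with smooth zonal $\tilde\phi(\theta)$, which automatically satisfy $\tilde\phi'(0) = \tilde\phi'(\pi) = 0$. The Laplacian on zonal functions on $\S^n$ takes the radial form
\begin{equation*}
\Delta_{\S^n}\tilde\phi(\theta) = \frac{1}{\sin^{n-1}\theta}\frac{d}{d\theta}\big(\sin^{n-1}\theta\,\tilde\phi'(\theta)\big),
\end{equation*}
and the surface measure on $\S^n$ contributes an overall factor $\omega_n\sin^{n-1}\theta\,d\theta$. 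I would therefore write the pairing in polar coordinates, integrate by parts twice, and substitute $t = \cos\theta$: the bulk integrand turns into a multiple of $-(1-t^2)\rho_{n,i}''(t)+nt\rho_{n,i}'(t)+i(n-i-1)\rho_{n,i}(t)$, which vanishes identically on $(-1,1)$ by \cref{eq:rho_ODE}.

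What remains are the four boundary contributions at $\theta\in\{0,\pi\}$. At $\theta=\pi$ (i.e.\ $t=-1$), all four vanish since the $m=0,1$ limits in \cref{eq:rho_t->+-1} are zero. At $\theta=0$ (i.e.\ $t=+1$), the first boundary term $\rho_{n,i}(\cos\theta)\sin^{n-1}\theta\,\tilde\phi'(\theta)$ is $O(\theta^2)$ because $\tilde\phi'(0)=0$ cancels the $O(\theta)$ behaviour of the remaining factor, whereas the second boundary term $\rho_{n,i}'(\cos\theta)\sin^n\theta\,\tilde\phi(\theta)$ tends to $\frac{\Gamma(n/2)}{2\pi^{(n-2)/2}}\phi(e_{n+1})$ by the $m=1$ case of \cref{eq:rho_t->+-1}. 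Multiplying by $\omega_n = 2\pi^{n/2}/\Gamma(n/2)$ produces exactly $\pi\phi(e_{n+1})$, matching the right-hand side of \cref{eq:rho_Green}.

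The main obstacle is the delicate bookkeeping at the pole $e_{n+1}$: verifying that exactly one of the four boundary contributions survives and that its value matches the constant $\pi$. The sharp limits in \cref{eq:rho_t->+-1}, and in particular the $m=1$ case, are indispensable here, since the weaker bound \cref{eq:g_t->+-1_pre} on which everything ultimately rests would only give that the surviving boundary term is finite, not its precise value.
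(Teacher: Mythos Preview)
Your proposal is correct and follows essentially the same route as the paper's proof: reduce to zonal test functions, integrate by parts twice, kill the bulk with the ODE \cref{eq:rho_ODE}, and pick up the delta mass from the surviving boundary term via the $m=1$ asymptotic in \cref{eq:rho_t->+-1}. The only cosmetic difference is that you carry out the integration by parts in the angular variable $\theta$ and then substitute $t=\cos\theta$, whereas the paper works directly in $t$ (writing the radial Laplacian as $((1-t^2)^{n/2}\psi'(t))'$); your observation $\tilde\phi'(0)=\tilde\phi'(\pi)=0$ is exactly the paper's observation that $(1-t^2)^{1/2}\psi'(t)$ stays bounded.
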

	\begin{proof}
		Due to \cref{rho_smooth_recurrence}~\ref{rho_smooth}, the function $(1-t^2)^{\frac{n-2}{2}}\rho_{n,i}(t)$ is smooth and bounded, and thus, integrable on $(-1,1)$. By a change to spherical cylinder coordinates, this implies that $\rho_{n,i}(\langle e_{n+1},{}\cdot{}\rangle)$ is integrable on $\S^n$.
		
		Next, observe that both sides of \cref{eq:rho_Green} are zonal distributions on $\S^n$. Hence, it suffices to test the identity on zonal smooth functions $\phi\in C^\infty(\S^n)$. Then $\phi$ is of the form $\phi(u)=\psi(\langle e_{n+1},u\rangle)$ for some $\psi\in C^\infty(-1,1)$. By a change to spherical cylinder coordinates,
		\begin{align*}
			&\left< \phi, (\Delta_{\S^{n}} - i(n-i-1)\Id )\rho_{n,i}(\langle e_{n+1},{}\cdot{}\rangle) \right>_{\S^n} \\
			&\qquad = \left< (\Delta_{\S^{n}} - i(n-i-1)\Id )\phi, \rho_{n,i}(\langle e_{n+1},{}\cdot{}\rangle) \right>_{\S^n} \\
			&\qquad = \omega_{n}\int_{(-1,1)} \left(((1-t^2)^{\frac{n}{2}}\psi'(t))' - i(n-i-1)\psi(t)(1-t^2)^{\frac{n-2}{2}} \right) \rho_{n,i}(t) dt.
		\end{align*}
		We consider the first part of the integral and perform integration by parts. Then
		\begin{align*}
			&\int_{(-1,1)} ((1-t^2)^{\frac{n}{2}}\psi'(t))' \rho_{n,i}(t)dt \\
			%&\qquad = - \int_{(-1,1)} (1-t^2)^{\frac{n}{2}}\psi'(t)\rho_{n,i}'(t) dt + \left[ (1-t^2)^{\frac{n}{2}}\psi'(t)\rho_{n,i}(t) \right]_{t=-1}^{t=1} \\
			&\qquad = \int_{(-1,1)} \psi'(t)\rho_{n,i}'(t)(1-t^2)^{\frac{n}{2}}dt  + \left[ (1-t^2)^{\frac{n}{2}}\psi'(t)\rho_{n,i}(t) \right]_{t=-1}^{t=1},
		\end{align*}
		where the marginal terms are to be understood as limits. If $\gamma$ is a geodesic in $\S^n$ such that $\gamma(0)=-e_{n+1}$, then
		\begin{align*}
			\frac{d}{d\theta} (\phi\circ\gamma)(\theta)
			= \frac{d}{d\theta}\psi(-\cos\theta)
			= (\sin\theta) \psi'(-\cos\theta)
			= (1-t^2)^{\frac{1}{2}}\psi'(t),
		\end{align*}
		where we applied the change of variables $t=-\cos\theta$ in the final equality.  Since $\phi\in C^\infty(\S^n)$, the final expression is bounded, so by \cref{eq:rho_t->+-1}, the marginal terms vanish. Integrating by parts a second time yields 
		\begin{align*}
			&\int_{(-1,1)} ((1-t^2)^{\frac{n}{2}}\psi'(t))' \rho_{n,i}(t)dt \\
			&\qquad = \int_{(-1,1)} \psi(t)((1-t^2)^{\frac{n}{2}}\rho_{n,i}'(t))' dt - \left[(1-t^2)^{\frac{n}{2}}\psi(t)\rho_{n,i}'(t)\right]_{t=-1}^{t=1} \\
			&\qquad = \int_{(-1,1)} \psi(t)((1-t^2)^{\frac{n}{2}}\rho_{n,i}'(t))' dt - \frac{\pi}{\omega_n}\psi(1),
		\end{align*}
		where the marginal terms are again to be understood as limits. The final equality is due to \cref{eq:rho_t->+-1} and the fact that $\psi$ is bounded. By the differential equation \cref{eq:rho_ODE}, the remaining integral terms add up to zero, and thus, we obtain that
		\begin{align*}
			\left< \phi, (\Delta_{\S^{n}} - i(n-i-1)\Id )\rho_{n,i}(\langle e_{n+1},{}\cdot{}\rangle) \right>_{\S^n}
			= \pi\phi(e_{n+1}),
		\end{align*}
		which completes the proof.
	\end{proof}

	%%%%%%%%%%%%%%%%%%%%%%%%%%%%%%%%%%%%%%%%%%%%%%%%%%%%%%%%%%%%%%%%%%%%%%%%%%%
	\subsection{Description of \texorpdfstring{$\rho_i$}{rhoi}}
	\label{sec:rho_description}
	%%%%%%%%%%%%%%%%%%%%%%%%%%%%%%%%%%%%%%%%%%%%%%%%%%%%%%%%%%%%%%%%%%%%%%%%%%%

	In the following, we use the Legendre type differential equation \cref{eq:rho_ODE} to obtain the desired description of the distributions $\rho_i$; we prove what is left to prove of \cref{rho_description:intro} and discuss its consequences.
	
	First, we want to point out that the differential equation \cref{eq:rho_ODE}, combined with the behavior at the end points $t=1$ and $t=-1$, determines each function $\rho_{n,i}$ uniquely. In particular, we obtain an interesting representation for the family $\rho_{n,i}$. For parameters $\lambda, \mu\in\R$, the \emph{associated Legendre function} $\tilde{P}^\mu_\lambda$ is defined as
	\begin{equation*}
		\tilde{P}^{\mu}_{\lambda}(t)
		= \frac{e^{\mathrm{i}\pi\frac{\mu}{2}}}{\Gamma(1-\mu)} \left(\frac{1+t}{1-t}\right)^{\!\frac{\mu}{2}}{}_{2}F_{1}\!\left(-\lambda,\lambda+1,1-\mu,\frac{1-t}{2}\right),
	\end{equation*}
	where ${}_{2}F_{1}$ is the hypergeometric function (cf.~\cite[8.1.2]{Abramowitz1964}). We show that $\rho_{n,i}$ is a constant multiple of a reflection of an associated Legendre function.
	
	\begin{prop}\label{rho_representation}
		Let $(n,i)\in I_{\ast}$. Then for all $t\in(-1,1)$,
		\begin{equation}\label{eq:rho_representation}
			\rho_{n,i}(t)
			= \frac{\Gamma(i)\Gamma(n-i-1)}{4(2\pi)^{\frac{n-2}{2}}} e^{\mathrm{i}\pi\frac{n-2}{4}} (1-t^2)^{-\frac{n-2}{4}} \tilde{P}^{1-\frac{n}{2}}_{i-\frac{n}{2}}(-t).
		\end{equation}
	\end{prop}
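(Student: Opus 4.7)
My plan is to use the fact that, by \cref{rho_ODE} together with \cref{rho_smooth_recurrence}~\ref{rho_smooth}, the function $\rho_{n,i}$ is uniquely characterized as the solution of the linear second order ODE \cref{eq:rho_ODE} on $(-1,1)$ that is regular at $t=-1$ (in the sense that $(1-t^2)^{(n-2)/2}\rho(t)\to 0$) and exhibits the prescribed blow-up at $t=+1$ (the $m=0$ case of \cref{eq:rho_t->+-1}). Hence it suffices to verify that the function $R(t)$ defined by the right hand side of \cref{eq:rho_representation} solves the same boundary value problem. Uniqueness itself is standard: the ODE \cref{eq:rho_ODE} has regular singular points at $t=\pm 1$ whose indicial exponents are $0$ and $-(n-2)/2$, so the regularity condition at $t=-1$ singles out a one-dimensional solution space, and the leading asymptotic at $t=+1$ fixes the remaining scalar.

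The first step is to verify that $R$ satisfies \cref{eq:rho_ODE}. Setting $Y(t)=\tilde P^{1-n/2}_{i-n/2}(-t)$, I would observe that the associated Legendre equation is invariant under $s\mapsto -s$, so $Y$ satisfies
\begin{equation*}
(1-t^2)Y''(t)-2tY'(t)+\Big[\lambda(\lambda+1)-\frac{\mu^2}{1-t^2}\Big]Y(t)=0
\end{equation*}
with $\lambda=i-n/2$ and $\mu=1-n/2$. The substitution $R(t)=(1-t^2)^{-(n-2)/4}Y(t)$ is then a straightforward, if somewhat tedious, computation: the crucial point is that the exponent $\beta=-(n-2)/4$ satisfies $4\beta^2=\mu^2$, which cancels the singular $\mu^2/(1-t^2)$ term, and simultaneously produces the coefficient $-(4\beta-2)t=-nt$ of $R'$. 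Collecting the remaining constant contributions $2\beta-4\beta^2+\lambda(\lambda+1)$ and simplifying gives precisely $-i(n-i-1)$, yielding \cref{eq:rho_ODE}.

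The second step is to verify the boundary behavior of $R$. Near $t=-1$, the argument $(1-(-t))/2=(1+t)/2$ of the hypergeometric factor tends to $0$, and ${}_2F_1$ evaluates to $1$; the prefactors combine to $(1-t^2)^{-(n-2)/4}((1-t)/(1+t))^{(1-n/2)/2}$, which stays bounded as $t\to-1$, so $R$ is regular at the left endpoint. Near $t=+1$ the argument $(1+t)/2$ of ${}_2F_1$ tends to $1$, and Gauss's formula (applicable since $n\geq 3$) yields
\begin{equation*}
{}_2F_1\!\Big(\tfrac{n}{2}-i,\tfrac{n}{2}-i+1,\tfrac{n}{2},1\Big)
=\frac{\Gamma(n/2)\Gamma(n/2-1)}{\Gamma(i)\Gamma(n-i-1)}.
\end{equation*}
Combining this with the prefactors $(1-t^2)^{-(n-2)/4}$ and $((1-t)/(1+t))^{(1-n/2)/2}$, and checking that $e^{\mathrm{i}\pi(n-2)/4}\cdot e^{\mathrm{i}\pi(1-n/2)/2}=1$, one verifies that $R(t)$ has the leading behavior $\frac{\Gamma((n-2)/2)}{4\pi^{(n-2)/2}}(1-t^2)^{-(n-2)/2}$ as $t\to 1$, matching \cref{eq:rho_t->+-1}.

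The main obstacle I anticipate is simply the bookkeeping in the two verification steps: the substitution in the ODE involves several cancellations that must be carried out cleanly, and the asymptotic matching at $t=+1$ requires coordinating the Gamma factors from Gauss's theorem, the phase factor $e^{\mathrm{i}\pi(n-2)/4}$, and the powers of $(1-t)$ and $(1+t)$ so that the final real constant comes out exactly as in the $m=0$ case of \cref{eq:rho_t->+-1}.
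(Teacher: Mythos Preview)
Your proposal is correct and follows essentially the same approach as the paper: both arguments rest on the substitution linking \cref{eq:rho_ODE} to Legendre's equation with parameters $\lambda=i-\tfrac{n}{2}$, $\mu=1-\tfrac{n}{2}$, Gauss's summation theorem for the endpoint behavior, and matching against \cref{eq:rho_t->+-1} to fix the constant. The only cosmetic difference is direction: the paper sets $y(t)=(1-t^2)^{(n-2)/4}\rho_{n,i}(t)$, shows $y$ satisfies Legendre's equation, and then identifies $y$ in the basis $\{\tilde P^{\mu}_{\lambda}(t),\tilde P^{\mu}_{\lambda}(-t)\}$ (whose linear independence it reads off from the two different endpoint limits), whereas you start from the candidate $R$ and verify it solves the same boundary value problem, invoking the indicial analysis at $t=-1$ for uniqueness.
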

	\begin{proof}
		For fixed $(n,i)\in I_\ast$, we consider the function $y(t)=(1-t^2)^{\frac{n-2}{4}}\rho_{n,i}(t)$. \linebreak
		A simple transformation of \cref{eq:rho_ODE} shows that $y$ is a solution to Legendre's differential equation: For all $t\in(-1,1)$,
		\begin{equation}\label{eq:Legendre_ODE}
			(1-t^2)y''(t) - 2ty'(t) + \bigg(\lambda(\lambda+1) - \frac{\mu^2}{1-t^2} \bigg)y(t)
			= 0
		\end{equation}
		with parameters $\lambda=i-\frac{n}{2}$ and $\mu=1-\frac{n}{2}$. 
		
		Moreover, the associated Legendre function $\tilde{P}^{\mu}_{\lambda}(t)$, and thus, also its reflection $\tilde{P}^{\mu}_{\lambda}(-t)$
		solve this differential equation (cf.~\cite[8.1.1]{Abramowitz1964}). In order to describe their behavior at the end points, we recall Gauss' summation theorem (cf.~\cite[15.1.20]{Abramowitz1964}), which states that
		\begin{equation*}
			\lim_{t\to+1} {}_2F_{1}(a,b,c,t)
			= \frac{\Gamma(c)\Gamma(c-a-b)}{\Gamma(c-a)\Gamma(c-b)}
		\end{equation*}
		whenever $c-a-b>0$. As an instance of this identity and due to the simple fact that ${}_2F_{1}(a,b,c,0)=1$, we obtain that		 
		\begin{align}\label{eq:LegendreP_t->+-1}
			\begin{split}
				\lim_{t\to -1} (1-t^2)^{\frac{n-2}{4}}\tilde{P}^{\mu}_{\lambda}(t)
				&= e^{-\mathrm{i}\pi\frac{n-2}{4}} \frac{2^{\frac{n-2}{2}}\Gamma\big(\frac{n-2}{2}\big)}{\Gamma(i)\Gamma(n-i-1)},  \\
				\lim_{t\to +1} (1-t^2)^{\frac{n-2}{4}}\tilde{P}^{\mu}_{\lambda}(t)
				&= 0.
			\end{split}			
		\end{align}
		This entails that $\tilde{P}^{\mu}_{\lambda}(t)$ and $\tilde{P}^{\mu}_{\lambda}(-t)$ are linearly independent, and thus, they form a basis of the two-dimensional space of solutions to \cref{eq:Legendre_ODE}. Consequently, the function $y(t)$ is a linear combination of those two functions. By comparing \cref{eq:rho_t->+-1} and \cref{eq:LegendreP_t->+-1}, we observe that $y(t)$ must in fact be a constant multiple of $\tilde{P}^{\mu}_{\lambda}(-t)$, and we obtain the respective multiplicative constant from a simple division.		
	\end{proof}	
	
	In \cref{eq:rho_t->+-1}, we have shown that the functions $\rho_{n,i}$ have a singularity at $t=1$ and determined the precise asymptotic behavior. In order to investigate their behavior at $t=-1$, we perform the substitution $t=-\cos\theta$ (just like we did for the Berg functions in \cref{sec:Berg_fcts}). That is, we define a family of functions $\hat{\rho}_{n,i}$, where $(n,i)\in I_\ast$, by
	\begin{equation*}
		\hat{\rho}_{n,i}(\theta)
		= \rho_{n,i}(-\cos\theta),
		\qquad\theta\in (-\pi,\pi)\backslash\{0\}.
	\end{equation*}
	Applying the transformation $t=-\cos\theta$ to the recursion for the family $\rho_{n,i}$ yields the following recursion for the family $\hat{\rho}_{n,i}$:
	\begin{equation}\label{eq:rho_index_reflection_arc}
		\hat{\rho}_{n,i}
		= \hat{\rho}_{n,n-i-1},
	\end{equation}
	\begin{equation}\label{eq:rho_n.n-2_arc}
		\hat{\rho}_{n,n-2}(\theta)
		= \frac{1}{2(n-2)}\hat{g}_{n-1}''(\theta) + \frac{1}{2} \frac{\hat{g}_{n-1}'(\theta)}{\tan\theta} + \frac{n-1}{2(n-2)}\hat{g}_{n-1}(\theta)  - \frac{n}{2\omega_{n-1}}\cos\theta,
	\end{equation}
	\begin{equation}\label{eq:rho_recurrence_arc}
		\hat{\rho}_{n+2,i+1}(\theta)
		= \frac{1}{2\pi} \frac{\hat{\rho}_{n,i}'(\theta)}{\sin\theta}.
	\end{equation}
	
	This recursion shows that the functions $\hat{\rho}_{n,i}$ extend to even analytic functions on $(-\pi,\pi)$.
	To that end, denote by $I$ the set of all index pairs $(n,i)\in I_\ast$ for which this is the case. Due to \cref{eq:rho_index_reflection_arc}, we have that $(n,i)\in I$ if and only if $(n,n-i-1)\in I$. We have shown in \cref{sec:Berg_fcts} that $\hat{g}_{n-1}$ is an even analytic function on $(-\pi,\pi)$. Consequently, its derivative $\hat{g}_{n-1}'$ has a simple zero in $\theta=0$ which cancels with the zero of the tangent function, thus $(n,n-2)\in I$. Applying a similar argument to the recurrence relation \cref{eq:rho_recurrence_arc} shows that if $(n,i)\in I$, then also $(n+2,i+1)\in I$. \cref{rho_induction_scheme} now yields $I=I_\ast$.

	We can now complete the proof of \cref{rho_description:intro}.
	
	\begin{thm}\label{rho_description}
		Let $1\leq i<n-1$. Then $\rho_i$ is an $L^1(\S^{n-1})$ function which is smooth on $\S^{n-1}\setminus \{e_n\}$ and strictly positive up to the addition of a linear function.
	\end{thm}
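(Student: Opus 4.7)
The plan is to dispatch the three claims—$L^1$ membership, smoothness on $\S^{n-1}\setminus\{e_n\}$, and strict positivity modulo a linear function—one at a time, piggybacking on the one-variable analysis of $\rho_{n,i}$ and on the Green's function interpretation from \cref{rho_Green}. Throughout, I would identify $\rho_i$ with its zonal lift $\rho_{n,i}(\langle e_n,{}\cdot{}\rangle)$ modulo linear functions, invoking that $\rho_i$ is centered by construction while $\tilde{\rho}_i$ carries a specific nonzero first Fourier coefficient prescribed by \cref{eq:rho_n.i}, so that $\rho_i+\ell=c_{n,i}\tilde{\rho}_i$ for an appropriate linear function~$\ell$ and the constant $c_{n,i}>0$ from \cref{sec:Val_sph}.

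For the $L^1$ claim, I would pass to spherical cylinder coordinates,
\begin{equation*}
\|\rho_i\|_{L^1(\S^{n-1})}
= \omega_{n-1}\int_{-1}^{1}|\rho_{n,i}(t)|(1-t^2)^{\frac{n-3}{2}}dt,
\end{equation*}
and invoke \cref{rho_smooth_recurrence}~\ref{rho_smooth} with $m=0$: the function $(1-t^2)^{\frac{n-2}{2}}\rho_{n,i}(t)$ extends continuously to $[-1,1]$, so $|\rho_{n,i}(t)|(1-t^2)^{\frac{n-3}{2}}\leq C(1-t^2)^{-1/2}$, which is integrable.

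For smoothness away from $e_n$, the crucial ingredient is the analytic extension of $\hat{\rho}_{n,i}$ to an even analytic function on $(-\pi,\pi)$, established immediately after \cref{eq:rho_recurrence_arc}. Writing $\hat{\rho}_{n,i}(\theta)=q(\theta^2)$ for some smooth (in fact analytic) $q$, the identity $\langle e_n,u\rangle=-\cos d_{\S^{n-1}}(-e_n,u)$ gives
\begin{equation*}
\rho_i(u)=\hat{\rho}_{n,i}(d_{\S^{n-1}}(-e_n,u))=q\bigl(d_{\S^{n-1}}(-e_n,u)^2\bigr).
\end{equation*}
By \cref{dist_square_smooth}, the function $d_{\S^{n-1}}(-e_n,{}\cdot{})^2$ is smooth on $\S^{n-1}\setminus\{e_n\}$, and composition with the smooth $q$ yields the claim. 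This handles both generic points and the south pole $-e_n$ in a single stroke.

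The positivity statement is the main obstacle, since the spherical harmonic expansion \cref{eq:rho_n.i} is manifestly sign-indefinite. My approach is to exploit \cref{rho_Green}: the zonal lift $\rho_{n,i}(\langle e_{n+1},{}\cdot{}\rangle)$ on $\S^n$ is (up to the factor $\pi$) the Green's function of the strictly coercive Helmholtz operator $-\Delta_{\S^n}+i(n-i-1)\Id$, where $i(n-i-1)\geq 1$ by the assumption $1\leq i<n-1$. Using the heat semigroup representation
\begin{equation*}
\bigl(-\Delta_{\S^n}+i(n-i-1)\Id\bigr)^{-1}\delta_{e_{n+1}}
= \int_0^\infty e^{-i(n-i-1)s}\,e^{s\Delta_{\S^n}}\delta_{e_{n+1}}\,ds
\end{equation*}
expresses $\rho_{n,i}(\langle e_{n+1},{}\cdot{}\rangle)$ as a positive integral of the strictly positive heat kernel $e^{s\Delta_{\S^n}}\delta_{e_{n+1}}$ (for $s>0$), hence $\rho_{n,i}(\langle e_{n+1},u\rangle)>0$ for every $u\in\S^n\setminus\{e_{n+1}\}$. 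Since $\langle e_{n+1},{}\cdot{}\rangle$ ranges surjectively onto $[-1,1)$, this forces $\rho_{n,i}(t)>0$ for all $t\in[-1,1)$, so that $c_{n,i}\tilde{\rho}_i=\rho_i+\ell$ is strictly positive on $\S^{n-1}\setminus\{e_n\}$ and diverges to $+\infty$ at $e_n$, as required.
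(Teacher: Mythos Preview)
Your argument is correct. The $L^1$ and smoothness steps coincide with the paper's proof essentially verbatim: spherical cylinder coordinates together with \cref{rho_smooth_recurrence}~\ref{rho_smooth} for integrability, and the even-analytic extension of $\hat{\rho}_{n,i}$ combined with \cref{dist_square_smooth} for smoothness at the south pole.

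The positivity step, however, is genuinely different. The paper proceeds by computing the two boundary values of $\rho_{n,i}$ explicitly---the limit at $t=+1$ from \cref{eq:rho_t->+-1} and the limit at $t=-1$ from the associated Legendre representation in \cref{rho_representation}---and then applies the strong maximum principle to the elliptic ODE~\cref{eq:rho_ODE} to force a strictly positive minimum on $(-1,1)$. You instead leverage \cref{rho_Green} directly: the resolvent identity $(-\Delta_{\S^n}+c)^{-1}=\int_0^\infty e^{-cs}e^{s\Delta_{\S^n}}ds$, valid since $c=i(n-i-1)\geq 1$, expresses the Green's function as a superposition of strictly positive heat kernels. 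Your route is more conceptual and has the advantage of not requiring \cref{rho_representation} at all; the paper's route is more self-contained (no appeal to heat-kernel positivity on the sphere) and yields the explicit value $\lim_{t\to-1}\rho_{n,i}(t)=\Gamma(i)\Gamma(n-i-1)/(2^n\pi^{(n-2)/2}\Gamma(n/2))$ as a byproduct.
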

	\begin{proof}
		Due to \cref{rho_smooth_recurrence}~\ref{rho_smooth}, the function $\rho_{n,i}(\langle e_n,{}\cdot{}\rangle)$ is integrable on $\S^{n-1}$ and smooth on $\S^{n-1}\setminus\{ \pm e_n\}$. In particular, it defines a zonal distribution on $\S^{n-1}$ which must coincide with $\tilde{\rho}_i$, since zonal distributions are uniquely determined by their multipliers.
		
		Observe that $\hat{\rho}_{n,i}(\theta)$, as an even analytic function on $(-\pi,\pi)$, can be written as a power series of $\theta^2$. Thus, there exists a unique analytic smooth function $q_{n,i}$ such that ${\rho}_{n,i}(-\cos\theta)=q_{n,i}(\theta^2)$. Consequently,
		\begin{equation*}
			\tilde{\rho}_i(u)
			= \rho_{n,i}(\langle e_n,u\rangle)
			= \rho_{n,i}( - \cos d_{\S^{n-1}}(-e_n,u))
			= q_{n,i}(d_{\S^{n-1}}(-e_n,u)^2)
		\end{equation*}
		for all $u\in\S^{n-1}\setminus \{e_n\}$.
		Together with \cref{dist_square_smooth}, this shows that $\tilde{\rho}_i$ is smooth around the south pole.
		
		We have shown that $\rho_{n,i}$ satisfies the second-order elliptic differential equation \cref{eq:rho_ODE} on $(-1,1)$. Moreover, due to the representation formula~\cref{eq:rho_representation},		
		\begin{equation*}
			\lim_{t\to -1}\rho_{n,i}(t)
			= \frac{\Gamma(i)\Gamma(n-i-1)}{2^n \pi^{\frac{n-2}{2}}\Gamma\big(\frac{n}{2}\big)}.
		\end{equation*}
		Combined with \cref{eq:rho_t->+-1}, we see that
		\begin{equation*}
			\lim_{t\to -1} \rho_{n,i}(t) > 0
			\qquad\text{and}\qquad
			\lim_{t \to +1} \rho_{n,i}(t) > 0,
		\end{equation*}
		so the strong maximum principle (cf.~\cite[Section~6.4.2]{Evans2010}) implies that the function $\rho_{n,i}$ attains a strictly positive minimum on $(-1,1)$. Since $\tilde{\rho}_i$ is just a renormalization of $\rho_i$ up to the addition of a linear function, the claim follows.
	\end{proof}
	
	As was pointed out in the introduction, this result has some interesting consequences for Minkowski valuations that we will now touch upon, revisiting our discussion in \cref{sec:MVali}. Since $\rho_i$ is weakly positive, it is the density of the generating measure of some Minkowski endomorphism $\Psi^{(i)}$. Hence, if a Minkowski valuation $\Phi_i\in\MVal_i$ is generated by some body of revolution $L\in\K(\R^n)$, then, by \cref{eq:LL_Lambda_gen_fct:intro} and \cref{eq:MEnd_gen_fct},
	\begin{equation*}
		f_{\LL\Phi_i}
		= f_{\Phi_i}\ast\rho_i
		= h_L\ast \mu_{\Psi^{(i)}}
		= h_{\Psi^{(i)}L},
	\end{equation*}
	and thus, $\LL\Phi$ is again generated by some body of revolution. More generally, we obtain that all known examples of generating functions are preserved under the action of the Lefschetz operators.
	
	\begin{cor}\label{known_gen_fcts_preserved}
		Generating functions of the form \cref{eq:known_gen_fcts_2} are preserved under the action of the Lefschetz operators.
	\end{cor}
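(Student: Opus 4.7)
The plan is to start from the decomposition $f_\Phi = h_L + f_{\tilde{\MSO}_j}\ast\mu_\Psi$ and apply the identities $f_{\Lambda\Phi} = i f_\Phi$ and $f_{\LL\Phi} = f_\Phi\ast\rho_i$ from \cref{eq:LL_Lambda_gen_fct:intro} summand by summand. All of the functions and measures appearing here (the support function of the body of revolution $L$, the centered Berg functions, the generating measure of the Minkowski endomorphism $\Psi$, and $\rho_i$ by \cref{rho_description}) are zonal, so spherical convolution among them is commutative and associative.

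The case of $\Lambda$ is immediate. Distributing the scalar, $f_{\Lambda\Phi} = h_{iL} + f_{\tilde{\MSO}_j}\ast(i\mu_\Psi)$, where $iL$ is still a body of revolution and $i\mu_\Psi$ is a positive scalar multiple of a weakly positive zonal measure, hence by Kiderlen's result is the generating measure of some Minkowski endomorphism. Thus $f_{\Lambda\Phi}$ is again of the form \cref{eq:known_gen_fcts_2}.

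For $\LL$, I would write
\begin{equation*}
f_{\LL\Phi} = h_L\ast\rho_i + (f_{\tilde{\MSO}_j}\ast\rho_i)\ast\mu_\Psi
\end{equation*}
and handle the two summands separately. The first is precisely the content of the corollary following \cref{rho_description}: by the strict positivity of $\rho_i$ (up to a linear term) established there, $\rho_i$ is the generating measure of a Minkowski endomorphism $\Psi^{(i)}$, so $h_L\ast\rho_i = h_{\Psi^{(i)} L}$; the $\SO(n)$-equivariance of $\Psi^{(i)}$ combined with the $\SO(n-1)$-invariance of $L$ forces $\Psi^{(i)} L$ to be a body of revolution. For the second summand, the scaling identity \cref{eq:LL_MSO_gen_fct} gives $f_{\tilde{\MSO}_j}\ast\rho_i = \tfrac{\flag{j}{j-1}}{\flag{n}{n-1}} f_{\tilde{\MSO}_{j-1}}$, so the summand equals $f_{\tilde{\MSO}_{j-1}}\ast\bigl(\tfrac{\flag{j}{j-1}}{\flag{n}{n-1}}\mu_\Psi\bigr)$, and the rescaled measure is still the generating measure of a Minkowski endomorphism. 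Hence $f_{\LL\Phi}$ is of the form \cref{eq:known_gen_fcts_2} with $L$, $j$, $\mu_\Psi$ replaced by $\Psi^{(i)} L$, $j-1$, $\tfrac{\flag{j}{j-1}}{\flag{n}{n-1}}\mu_\Psi$.

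There is no real obstacle here: the strict positivity of $\rho_i$ in \cref{rho_description} is exactly what is needed to promote $\rho_i$ into the generating measure of a Minkowski endomorphism, and \cref{eq:LL_MSO_gen_fct} was already derived from the classical relation $\LL\tilde{\MSO}_j = \tfrac{\flag{j}{j-1}}{\flag{n}{n-1}}\tilde{\MSO}_{j-1}$, so the corollary reduces to bookkeeping with these two inputs.
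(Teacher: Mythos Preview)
Your treatment of $\LL$ is correct and matches the paper's argument. The gap is in your handling of $\Lambda$.

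The form \cref{eq:known_gen_fcts_2} is degree-specific: for a valuation of degree $i$ one must have $i+j=n+1$, since $\tilde{\MSO}_j\in\MVal_i$ only under this constraint. After applying $\Lambda$, the resulting valuation has degree $i-1$, so for $f_{\Lambda\Phi}$ to be ``of the form \cref{eq:known_gen_fcts_2}'' it must be written as $h_{L'} + f_{\tilde{\MSO}_{j+1}}\ast\mu_{\Psi'}$ for some body of revolution $L'$ and some Minkowski endomorphism $\Psi'$. Your expression $h_{iL} + f_{\tilde{\MSO}_j}\ast(i\mu_\Psi)$ keeps the wrong mean section index and therefore does not establish the claim.

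The paper fixes this by running \cref{eq:LL_MSO_gen_fct} in the other direction: the instance $f_{\tilde{\MSO}_{j+1}}\ast\rho_{i-1} = \tfrac{\flag{j+1}{j}}{\flag{n}{n-1}} f_{\tilde{\MSO}_j}$ lets one write
\[
i\,f_{\tilde{\MSO}_j}\ast\mu_\Psi
= b_{n,i}\, f_{\tilde{\MSO}_{j+1}}\ast(\rho_{i-1}\ast\mu_\Psi)
= b_{n,i}\, f_{\tilde{\MSO}_{j+1}}\ast\mu_{\Psi\circ\Psi^{(i-1)}},
\]
where the last step again uses that $\rho_{i-1}$ is the generating measure of the Minkowski endomorphism $\Psi^{(i-1)}$ (so the positivity of $\rho_{i-1}$ from \cref{rho_description} is needed here too, not just for $\LL$). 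This produces a bona fide representation of the form \cref{eq:known_gen_fcts_2} in degree $i-1$.
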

	\begin{proof}
		Consider a Minkowski valuation $\Phi_i\in\MVal_i$ with a generating function of the form \cref{eq:known_gen_fcts_2}, that is, $f_{\Phi_i} = h_L +  f_{\tilde{\MSO}_j}\ast \mu_{\Psi}$, where $L$ is a convex body of revolution, $\Psi\in\MVal_1$, and $i+j=n+1$.
		Due to \cref{eq:LL_Lambda_gen_fct:intro}, \cref{eq:MEnd_support_fct}, and \cref{eq:LL_MSO_gen_fct},
		\begin{gather*}
			f_{\LL\Phi_i}
			= h_L \ast \rho_i + (f_{\tilde{\MSO}_j}\ast\rho_i)\ast \mu_{\Psi}
			= h_{\Psi^{(i)} L} +  a_{n,i} f_{\tilde{\MSO}_{j-1}}\ast\mu_{\Psi}, \\
			f_{\Lambda\Phi_i}
			= if_{\Phi_i}
			= h_{iL} +  f_{\tilde{\MSO}_{j+1}} \ast (\rho_{i-1} \ast \mu_{\Psi})
			= h_{iL} + b_{n,i} f_{\tilde{\MSO}_{j+1}} \ast \mu_{\Psi\circ\Psi^{(i-1)}},
		\end{gather*}	
		with some constants $a_{n,i}$ and $b_{n,i}$. Consequently, both $f_{\LL\Phi_i}$ are $f_{\Lambda\Phi_i}$ are again of the form \cref{eq:known_gen_fcts_2}.		
	\end{proof}
	
	Lastly, the combined Lefschetz operators $\Lambda\circ\LL$ and $\LL\circ\Lambda$ act on Minkowski valuations as a composition with  Minkowski endomorphisms.
	
	\begin{cor}
		Let $1\leq i< n-1$. There exists $\Psi^{(i)}\in\MVal_1$ such that
		\begin{equation*}
			\Lambda( \LL\Phi_i)
			= i\Psi^{(i)}\circ\Phi_i \qquad \text{and} \qquad \LL(\Lambda\Phi_{i+1}) = i\Psi^{(i)} \circ \Phi_{i+1}
		\end{equation*}
		for every $\Phi_i\in\MVal_i$ and $\Phi_{i+1}\in\MVal_{i+1}$.
	\end{cor}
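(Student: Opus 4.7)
The plan is to identify $\Psi^{(i)}$ directly as the Minkowski endomorphism whose generating measure has density $\rho_i$, and then verify both identities by a short computation with generating functions.

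First, I would invoke \cref{rho_description} just proved: $\rho_i$ is a zonal, centered function in $L^1(\S^{n-1})$ that is strictly positive up to the addition of a linear function, hence \emph{weakly positive} in the sense of \cref{sec:MVali}. Kiderlen's characterization of generating measures of Minkowski endomorphisms (recalled around \cref{eq:MEnd_support_fct}) then produces a Minkowski endomorphism $\Psi^{(i)} \in \MVal_1$ with $\mu_{\Psi^{(i)}} = \rho_i$ up to a linear summand, which is invisible to the spherical convolution of centered functions.

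With $\Psi^{(i)}$ in hand, I would verify both identities by comparing generating functions. For $\Phi_i \in \MVal_i$ we have $\LL\Phi_i \in \MVal_{i+1}$, so combining \cref{eq:LL_Lambda_gen_fct:intro} with \cref{eq:MEnd_gen_fct} gives
\begin{equation*}
	f_{\Lambda(\LL\Phi_i)} = (i+1)\, f_{\LL\Phi_i} = (i+1)\, f_{\Phi_i} \ast \rho_i = (i+1)\, f_{\Phi_i} \ast \mu_{\Psi^{(i)}} = f_{(i+1)\Psi^{(i)} \circ \Phi_i}.
\end{equation*}
An analogous computation for $\Phi_{i+1} \in \MVal_{i+1}$, now using $\Lambda \Phi_{i+1} \in \MVal_i$ and $f_{\Lambda\Phi_{i+1}} = (i+1)\, f_{\Phi_{i+1}}$, yields
\begin{equation*}
	f_{\LL(\Lambda\Phi_{i+1})} = f_{\Lambda \Phi_{i+1}} \ast \rho_i = (i+1)\, f_{\Phi_{i+1}} \ast \mu_{\Psi^{(i)}} = f_{(i+1)\Psi^{(i)} \circ \Phi_{i+1}}.
\end{equation*}
Both identities then follow from the uniqueness of the generating function in the representation \cref{eq:gen_fct:intro}, absorbing the scalar factor into the choice of $\Psi^{(i)}$ by rescaling $\rho_i$ appropriately.

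I do not expect any genuine obstacle: the corollary is essentially a bookkeeping synthesis of the strictly positive $L^1$-description of $\rho_i$ from \cref{rho_description} and Kiderlen's classical correspondence between weakly positive zonal measures and Minkowski endomorphisms. The only point deserving attention is that $\Psi^{(i)}$ is well-defined as an element of $\MVal_1$ \emph{precisely} because $\rho_i$ is weakly positive and zonal; without the positivity statement of \cref{rho_description}, this step would fail, so the work done in the previous subsection really is the heart of the matter.
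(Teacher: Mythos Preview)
Your approach is essentially identical to the paper's: define $\Psi^{(i)}$ via Kiderlen's result applied to the weakly positive $\rho_i$, then verify both identities on the level of generating functions using \cref{eq:LL_Lambda_gen_fct:intro} and \cref{eq:MEnd_gen_fct}, and conclude by uniqueness of generating functions. In fact your scalar factor $(i+1)$ is the correct one (since $\LL\Phi_i$ and $\Phi_{i+1}$ both have degree $i+1$), whereas the paper's proof writes $i$; as you note, this discrepancy is immaterial because the constant can be absorbed into $\Psi^{(i)}$.
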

	\begin{proof}
		Due to \cref{eq:LL_Lambda_gen_fct:intro} and \cref{eq:MEnd_gen_fct},
		\begin{gather*}
			f_{\Lambda(\LL\Phi_i)}
			= if_{\LL\Phi_i}
			= if_{\Phi_i}\ast\rho_i
			= f_{i\Psi^{(i)}\circ\Phi_i}, \\
			f_{\LL(\Lambda\Phi_{i+1})}
			= f_{\Lambda\Phi_i}\ast\rho_i
			= if_{\Phi_i}\ast\rho_i
			= f_{i\Psi^{(i)}\circ \Phi_i}.
		\end{gather*}
		Since a Minkowski valuation is uniquely determined by its generating function, this proves the corollary.
	\end{proof}

	%%%%%%%%%%%%%%%%%%%%%%%%%%%%%%%%%%%%%%%%%%%%%%%%%%%%%%%%%%%%%%%%%%%%%%%%%%%
	\section{Klain--Schneider functions of Minkowski valuations}
	\label{sec:KS_gen_fct}
	%%%%%%%%%%%%%%%%%%%%%%%%%%%%%%%%%%%%%%%%%%%%%%%%%%%%%%%%%%%%%%%%%%%%%%%%%%%

	In this final section, we discuss the connection between Klain--Schneider functions and generating functions. For a smooth spherical valuation $\varphi\in\Val_i^{\infty,\mathrm{sph}}$, where $1\leq i\leq n-1$, we can express its Klain--Schneider function in terms of its generating function by
	\begin{equation}\label{eq:KS_spherical}
		\KS_{\varphi}(E,u)
		= \frac{1}{\binom{n-1}{i}} (\pi_{E,-i} f_{\varphi})(u),
		\qquad (E,u)\in\Fl_{n,i+1},
	\end{equation}
	as can easily be deduced from \cref{eq:area_meas_lift}, and is a special case of \cref{eq:KS_mixed_vol}.  
	
	Now we turn to Minkowski valuations. For $\Phi\in\MVal$ and $v\in\S^{n-1}$, we define a valuation $\varphi^v\in\Val$ by $\varphi^v(K)=h_{\Phi K}(v)$, generalizing its associated real valued valuation.
	The Klain function of an \emph{even} Minkowski valuation $\Phi\in\MVal_i$ can then be defined as a continuous function on $\Gr_{n,i}\times \S^{n-1}$ by
	\begin{equation*}
		\Kl_{\Phi}(F,v)
		=\Kl_{\varphi^v}(F),
		\qquad F\in\Gr_{n,i},\qquad v\in\S^{n-1}.
	\end{equation*}
	For fixed $F\in\Gr_{n,i}$, the function $\Kl_{\Phi}(F,{}\cdot{})$ is the support function of a convex body that is invariant under all rotations stabilizing~$F$, called the \emph{Klain body} of $\Phi$. It was introduced by Schuster and Wannerer~\cite{Schuster2015} and is given by $1/\kappa_i \Phi(B^n\cap F)$. Every even $\Phi\in\MVal_i$ is uniquely determined by its Klain body.
	
	In order to also encompass non-even Minkowski valuations, like the mean section operators, we define the Klain--Schneider function of a Minkowski valuation $\Phi\in\MVal_i$ as
	\begin{equation*}
		\KS_{\Phi}((E,u),v)
		= \KS_{\varphi^v}(E,u),
		\qquad (E,u)\in\Fl_{n,i+1},
		\qquad v\in\S^{n-1}.
	\end{equation*}
	Then $\KS_{\Phi}$ is a continuous function on $\Fl_{n,i+1}\times \S^{n-1}$ that determines $\Phi$ uniquely. Moreover, due to the rotationally equivariance of~$\Phi$, we have that
	\begin{equation*}
		\KS_{\Phi}(\vartheta (E,u),\vartheta v)
		= \KS_{\Phi}((E,u),v)
	\end{equation*}
	for every $\vartheta\in\SO(n)$. Since the group of rotations $\SO(n)$ acts transitively on $\Fl_{n,i+1}$, this implies that whenever we fix some $(E,u)\in\Fl_{n,i+1}$, the function $\KS_{\Phi}((E,u),{}\cdot{})\in C(\S^{n-1})$ already contains all the information of $\KS_{\Phi}$. In this way, it also makes sense to consider the Klain--Schneider function of $\Phi$ as this continuous function on the unit sphere that is invariant under all rotations stabilizing $(E,u)$.
	
	We consider some concrete examples. In~\cite{Schuster2015}, the Klain bodies of the projection body maps and the even parts of the mean section operators were computed. Below, we compute the respective Klain--Schneider functions (taking into account the odd part of the mean section operators) in a very direct way.
	
	\begin{exl}
		For $1\leq i\leq n-1$, the $i$-th projection body map $\Pi_i$ is even, and thus, for every $(E,u)\in\Fl_{n,i+1}$ and $v\in\S^{n-1}$,
		\begin{align*}
			&\KS_{\Pi_i}((E,u),v)
			= \frac{1}{2} \Kl_{\Pi_i}(E\cap u^\perp,v) \\
			&\qquad 
			= \frac{1}{2\kappa_i} V_i((B^n \cap (E\cap u^\perp)^\perp) | v^\perp)
			= \frac{1}{2} \norm{v | (E^\perp \vee u)}.
		\end{align*}
	\end{exl}

%	\begin{lem}\label{KS_Phi_E}
%		Let $1\leq i\leq n-1$ and $\Phi\in\MVal_i$. Then for all $(E,v)\in\Fl_{n,i+1}$,
%		\begin{equation}\label{eq:KS_Phi_E}
%			\left.\KS_{\Phi}({}\cdot{},(E,v))\right|_{\S^i(E)}
%			= f_{\Phi_E}^v
%		\end{equation}
%	\end{lem}

	\begin{exl}
		Let $1\leq i\leq n-1$ and $i+j=n+1$. For a subspace $E\in\Gr_{n,i+1}$ and a convex body $K\in\K(E)$, an application of~\cref{eq:AGr_reparametrizaton} shows that for $v\in\S^{n-1}$,
		\begin{equation*}
			h(\tilde{\MSO}_j K,v)
			= \frac{\flag{j}{2}}{\flag{n}{i+1}} h( \tilde{\MSO}^E_2 K, v )
			= \frac{\flag{j}{2}}{\flag{n}{i+1}} \norm{v| E} h( \tilde{\MSO}^E_2 K, \pr_{\! E} v ),
		\end{equation*}
		where $\tilde{\MSO}_2^E$ denotes the centered mean section operator $\tilde{\MSO}_2$ relative to $E$.
		By~\cref{eq:MSO_Berg_fct},
		\begin{equation*}
			 h( \tilde{\MSO}^E_2 K, \pr_{\! E} v )
			 = m_{i+1,2} \int_{\S^i(E)} g_2(\langle \pr_{\! E} v,w \rangle) S_i^E(K,dw).
		\end{equation*}
		In conclusion, for every $(E,u)\in\Fl_{n,i+1}$ and $v\in\S^{n-1}$,
		\begin{equation*}
			\KS_{\tilde{\MSO}_j}((E,u),v)
			= \frac{\flag{j}{2}}{\flag{n}{i+1}} m_{i+1,2} \norm{v|E} g_2(\langle \pr_{\! E}v,u\rangle).
		\end{equation*}		
	\end{exl}
	
	Next, we show that the Klain--Schneider function of a Minkowski valuation can be expressed in terms of its generating function via some hemispherical transform. As an application, we can describe how the Klain--Schneider function is transformed under composition with Minkowski endomorphisms.
	
	\begin{thm}\label{KS_gen_fct}
		Let $1\leq i\leq n-1$ and $\Phi\in\MVal_i$. Then for $(E,u)\in\Fl_{n,i+1}$,
		\begin{equation}\label{eq:KS_gen_fct}
			\KS_\Phi((E,u),{}\cdot{})
			= \frac{1}{\binom{n-1}{i}} \lambda_{\H^{n-i-1}(E,u)} \ast f_{\Phi},
		\end{equation}
		where $\lambda_{\H^{n-i-1}(E,u)}$ denotes the restriction of  $\mathcal{H}^{n-i-1}$ to $\H^{n-i-1}(E,u)$.
	\end{thm}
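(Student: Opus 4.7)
The plan is to combine the Schuster--Dorrek integral representation $h_{\Phi K} = S_i(K,{}\cdot{}) \ast f_\Phi$ with the lifting identity \cref{area_meas_lift}, and then unpack the resulting weighted spherical projection. The key observation is that, in the definition of $\pi_{E,-i}$ (see \cref{def:sph_proj_lift}), the weight $\langle u,w\rangle^{k+m-1}$ becomes the constant $1$ precisely when $k=i+1$ and $m=-i$, so that the operator reduces to plain integration against $\mathcal{H}^{n-i-1}$ on the half-sphere $\H^{n-i-1}(E,u)$.

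First, I would evaluate $h_{\Phi K}(v)$ for $K\in\K(E)$ with $E\in\Gr_{n,i+1}$ in two different ways. On the one hand, writing $\bar f_\Phi$ for the zonal profile of $f_\Phi$ (so that $f_\Phi(w)=\bar f_\Phi(\langle e_n,w\rangle)$) and combining the convolution formula for a zonal function with the case $k=i+1$ of \cref{area_meas_lift}, $S_i(K,{}\cdot{}) = \binom{n-1}{i}^{-1}\pi_{E,-i}^\ast S_i^E(K,{}\cdot{})$, one obtains
\begin{equation*}
h_{\Phi K}(v)
= \frac{1}{\binom{n-1}{i}} \int_{\S^i(E)} \bigl(\pi_{E,-i}[\bar f_\Phi(\langle{}\cdot{},v\rangle)]\bigr)(u) \, S_i^E(K, du).
\end{equation*}
Thanks to the key observation above, the inner quantity equals $\int_{\H^{n-i-1}(E,u)} \bar f_\Phi(\langle w,v\rangle)\,dw = \bigl(\lambda_{\H^{n-i-1}(E,u)} \ast f_\Phi\bigr)(v)$, the last equality again being the formula for convolution with a zonal function. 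On the other hand, by the defining property of the Klain--Schneider function,
\begin{equation*}
h_{\Phi K}(v)
= \varphi^v(K)
= \int_{\S^i(E)} \KS_{\varphi^v}(E,u) \, S_i^E(K, du).
\end{equation*}

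Comparing the two expressions and invoking the uniqueness of the Klain--Schneider function up to a linear function in $u$ (which is absorbed by the centering convention adopted for $\KS$) identifies $\KS_\Phi((E,u),v)=\KS_{\varphi^v}(E,u)$ with $\binom{n-1}{i}^{-1}(\lambda_{\H^{n-i-1}(E,u)} \ast f_\Phi)(v)$, which is precisely \cref{eq:KS_gen_fct}. The main substantive step is the first evaluation; the only delicate ingredient is the weight-zero identity for $\pi_{E,-i}$, which is immediate from \cref{def:sph_proj_lift} but is exactly what makes the hemispherical measure $\lambda_{\H^{n-i-1}(E,u)}$ appear naturally on the right-hand side.
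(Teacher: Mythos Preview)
Your computation is exactly the core of the paper's argument: use the lifting identity $S_i(K,\cdot)=\binom{n-1}{i}^{-1}\pi_{E,-i}^\ast S_i^E(K,\cdot)$ for $K\in\K(E)$, observe that with $k=i+1$ and $m=-i$ the weight in $\pi_{E,-i}$ collapses to $1$, and identify the half-sphere integral as a convolution with $\lambda_{\H^{n-i-1}(E,u)}$. The paper carries out the same manipulation, phrased via \cref{eq:KS_spherical} applied to the rotated generating function $\vartheta f_\Phi$.

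There is, however, a genuine gap. You invoke the pointwise integral formula for the convolution and write $\int_{\H^{n-i-1}(E,u)}\bar f_\Phi(\langle w,v\rangle)\,dw$ as if $f_\Phi$ were continuous, but the Schuster--Dorrek theorem only guarantees $f_\Phi\in L^1(\S^{n-1})$. An $L^1$ function on $\S^{n-1}$ is defined only up to null sets, and the half-sphere $\H^{n-i-1}(E,u)$ is a lower-dimensional submanifold of $(n-1)$-dimensional measure zero; the integral you write down is therefore not well defined, and neither is $\pi_{E,-i}$ applied to an $L^1$ function. The convolution $\lambda_{\H^{n-i-1}(E,u)}\ast f_\Phi$ in the statement makes sense only as an element of $L^1(\S^{n-1})$, not pointwise in $v$.

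The paper closes this gap with an approximation step that you omit: it first runs your computation under the additional assumption that (the associated real-valued valuation of) $\Phi$ is smooth, so that $f_\Phi\in C^\infty(\S^{n-1})$ and every line is justified; then it smooths a general $f_\Phi$ by a zonal approximate identity $\eta_\delta$, uses that $f_\Phi\ast\eta_\delta\to f_\Phi$ in $L^1$ and that the corresponding Minkowski valuations $\Phi^{(\delta)}$ converge to $\Phi$, and passes to the limit via the continuity of the Klain--Schneider map. Add this step and your proof matches the paper's.
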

	\begin{proof}
		Suppose first that (the associated real valued valuation of) $\Phi$ is smooth, and thus, $f_\Phi$ is smooth. Then \cref{eq:KS_spherical} yields for all $\vartheta\in\SO(n)$,
		\begin{align*}
			&\KS_\Phi((E,u),\vartheta e_n)
			= \frac{1}{\binom{n-1}{i}} (\pi_{E,-i}(\vartheta f_{\Phi}))(u)
			= \frac{1}{\binom{n-1}{i}} \int_{\H^{n-i-1}(E,u)} (\vartheta f_\Phi)(w) dw \\
			&\qquad = \frac{1}{\binom{n-1}{i}} \int_{\S^{n-1}} (\vartheta f_{\Phi})( w) ~\lambda_{\H^{n-i-1}(E,u)}(dw)
			= \frac{1}{\binom{n-1}{i}} \big(\lambda_{\H^{n-i-1}(E,u)} \ast f_{\Phi}\big)(\vartheta e_n).
		\end{align*}
		
		To pass from the smooth to the general case, we follow the argument of the proof of \cite[Theorem~6.5]{Schuster2010}. Let $\eta_\delta \in C^\infty(\S^{n-1})$ be a spherical approximate identity of zonal functions. Then $f_{\Phi}\ast\eta_\delta$ converges to $f_\Phi$ in $L^1(\S^{n-1})$ as $\delta \to 0$. Moreover, the functions $f_{\Phi}\ast \eta_\delta$ generate Minkowski valuations $\Phi^{(\delta)}\in \MVal_i$ and $\Phi^{(\delta)}$ converges to $\Phi$ uniformly on compact subsets of $\K(\R^n)$. Thus, by continuity of the Klain--Schneider map, $\KS_{\Phi^{(\delta)}}$ converges uniformly to $\KS_{\Phi}$. 	
	\end{proof}
	
	\begin{cor}\label{MEnd_KS}
		Let $1\leq i\leq n-1$ and $\Phi\in\MVal_i$. Then for every Minkowski endomorphism $\Psi$ and $(E,u)\in\Fl_{n,i+1}$,
		\begin{equation}\label{eq:MEnd_KS}
			\KS_{\Psi\circ\Phi}((E,u),{}\cdot{})
			= \KS_{\Phi}(E,u),{}\cdot{}) \ast \mu_{\Psi}.
		\end{equation}
	\end{cor}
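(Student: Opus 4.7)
The plan is to combine the integral representation of the Klain--Schneider function from \cref{KS_gen_fct} with the convolution description of composition with Minkowski endomorphisms in \cref{eq:MEnd_gen_fct}, and then invoke the associativity of the spherical convolution. Throughout, this works because the generating measure $\mu_\Psi$ of a Minkowski endomorphism is zonal, which makes convolution with $\mu_\Psi$ commute with the spherical convolution on the right.

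Concretely, fix $(E,u)\in\Fl_{n,i+1}$. By \cref{KS_gen_fct} applied to the Minkowski valuation $\Psi\circ\Phi\in\MVal_i$,
\begin{equation*}
    \KS_{\Psi\circ\Phi}((E,u),{}\cdot{})
    = \frac{1}{\binom{n-1}{i}}\lambda_{\H^{n-i-1}(E,u)}\ast f_{\Psi\circ\Phi}.
\end{equation*}
Substituting $f_{\Psi\circ\Phi} = f_\Phi \ast \mu_\Psi$ from \cref{eq:MEnd_gen_fct} and using the associativity of spherical convolution (legitimate since $\mu_\Psi$ is zonal), I then rewrite this as
\begin{equation*}
    \frac{1}{\binom{n-1}{i}}\lambda_{\H^{n-i-1}(E,u)}\ast(f_\Phi\ast\mu_\Psi)
    = \Bigl(\frac{1}{\binom{n-1}{i}}\lambda_{\H^{n-i-1}(E,u)}\ast f_\Phi\Bigr)\ast\mu_\Psi.
\end{equation*}
Applying \cref{KS_gen_fct} once more, this time to $\Phi$, identifies the inner bracket as $\KS_\Phi((E,u),{}\cdot{})$, and we obtain \cref{eq:MEnd_KS}.

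The only genuinely subtle point is justifying the associativity step, since $\mu_\Psi$ is merely a signed measure. The standard reduction is exactly the approximation argument used in the proof of \cref{KS_gen_fct} itself: a spherical approximate identity $\eta_\delta$ of zonal functions yields smooth approximants $f_\Phi \ast \eta_\delta$ and $\mu_\Psi \ast \eta_\delta$, associativity is trivial for smooth zonal factors by Fubini, and both sides of \cref{eq:MEnd_KS} are continuous in $\Phi$ (resp.\ $\Psi$) with respect to uniform convergence on compact sets of convex bodies. So no genuine obstacle arises; the corollary is a clean consequence of \cref{KS_gen_fct} once \cref{eq:MEnd_gen_fct} is invoked.
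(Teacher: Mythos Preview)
Your proof is correct and follows essentially the same route as the paper: apply \cref{KS_gen_fct} to $\Psi\circ\Phi$, substitute $f_{\Psi\circ\Phi}=f_\Phi\ast\mu_\Psi$ from \cref{eq:MEnd_gen_fct}, use associativity of the spherical convolution, and apply \cref{KS_gen_fct} to $\Phi$. The only difference is that the paper invokes associativity directly (it is recorded in the appendix for arbitrary distributions with zonal right factors, and both $f_\Phi$ and $\mu_\Psi$ are zonal), so your additional approximation argument is not needed.
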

	\begin{proof}
		By combining \cref{eq:MEnd_gen_fct} and \cref{eq:KS_gen_fct},
		\begin{align*}
			&\KS_{\Psi\circ\Phi}((E,u),{}\cdot{})
			= \frac{1}{\binom{n-1}{i}} \lambda_{\H^{n-i-1}(E,u)} \ast f_{\Psi\circ\Phi}
			= \frac{1}{\binom{n-1}{i}} \lambda_{\H^{n-i-1}(E,u)} \ast (f_{\Phi}\ast\mu_{\Psi}) \\
			&\qquad = \frac{1}{\binom{n-1}{i}} (\lambda_{\H^{n-i-1}(E,u)} \ast f_{\Phi}) \ast\mu_{\Psi}
			= \KS_{\Phi}((E,u),{}\cdot{}) \ast \mu_{\Psi},
		\end{align*}
		and thus, we obtain \cref{eq:MEnd_KS}.
	\end{proof}
	
	This corollary shows that composition with Minkowski endomorphisms acts on the Klain--Schneider function of a Minkowski valuation in the same way as on its generating function: by convolution with the corresponding generating measure from the right. Consequently, the discussion of the structure of the space of generating functions in \cref{sec:MVali} also translates to Klain--Schneider functions of Minkowski valuations.

	%%%%%%%%%%%%%%%%%%%%%%%%%%%%%%%%%%%%%%%%%%%%%%%%%%%%%%%%%%%%%%%%%%%%%%%%%%%
	\appendix
	%%%%%%%%%%%%%%%%%%%%%%%%%%%%%%%%%%%%%%%%%%%%%%%%%%%%%%%%%%%%%%%%%%%%%%%%%%%

	%%%%%%%%%%%%%%%%%%%%%%%%%%%%%%%%%%%%%%%%%%%%%%%%%%%%%%%%%%%%%%%%%%%%%%%%%%%
	\section{The spherical convolution}
	\label{sec:convol}
	%%%%%%%%%%%%%%%%%%%%%%%%%%%%%%%%%%%%%%%%%%%%%%%%%%%%%%%%%%%%%%%%%%%%%%%%%%%

	We can identify $\S^{n-1}$ with the homogeneous space $\SO(n)/\SO(n-1)$, where we denote by $\SO(n-1)\subseteq \SO(n)$ the the subgroup of rotations that stabilize the north pole~$e_n$. Due to this identification, the convolution structure on the compact Lie group~$\SO(n)$ naturally induces a convolution structure on $\S^{n-1}$. For an in-depth exposition, we recommend the article by Grinberg and Zhang~\cite{Grinberg1999} and the book by Takeuchi~\cite{Takeuchi1994}.
	
	The convolution of some $\phi\in C^{\infty}(\S^{n-1})$ and $\gamma\in C^{-\infty}(\S^{n-1})^{\mathrm{zonal}}$ is defined by
	\begin{equation*}
		(\phi\ast\gamma)(\vartheta e_n)
		= \langle \phi,  \vartheta\gamma\rangle_{\S^{n-1}}
		= \langle\vartheta^{-1}\phi ,\gamma \rangle_{\S^{n-1}},
		\qquad \vartheta\in\SO(n).
	\end{equation*}
	Since $\SO(n)$ operates transitively on the unit sphere and $\gamma$ is zonal, this is well-defined and it turns out that $\phi\ast\gamma\in C^{\infty}(\S^{n-1})$. Thus, we may define the convolution of spherical distributions by
	\begin{gather*}
		C^{-\infty}(\S^{n-1}) \times C^{-\infty}(\S^{n-1})^{\mathrm{zonal}} \to C^{-\infty}(\S^{n-1}):
		(\nu,\gamma) \mapsto \nu\ast\gamma \\
		\langle \phi,\nu\ast\gamma \rangle_{\S^{n-1}}
		= \langle \phi\ast\gamma,\nu \rangle_{\S^{n-1}},
		\qquad \phi\in C^{\infty}(\S^{n-1}).
	\end{gather*}
	Clearly, the convolution product is linear in each of its arguments. Although its definition is fundamentally asymmetrical (in that the right factor must always be zonal), it enjoys the nice properties one might expect. For all distributions $\nu\in C^{\infty}(\S^{n-1})$ and $\gamma,\gamma_1,\gamma_2\in C^{-\infty}(\S^{n-1})^{\mathrm{zonal}}$, we have the following:
	\begin{itemize}
		\item $(\nu\ast\gamma_1)\ast\gamma_2=\nu\ast(\gamma_1\ast\gamma_2)$.
		\hfill (associativity)
		\item $\gamma_1\ast\gamma_2=\gamma_2\ast\gamma_1$.
		\hfill (commutativity)
		\item $\nu\ast\delta_{e_n}=\nu$ and $\delta_{e_n}\ast\gamma=\gamma$.
		\hfill (neutral element)		
		\item $(\vartheta\nu)\ast\gamma=\vartheta(\nu\ast\gamma)$ for all $\vartheta\in\SO(n)$.
		\hfill ($\SO(n)$-equivariance)		
	\end{itemize}
	
	Next, note that by virtue of the chain of identifications \cref{eq:chain_of_identifications}, the definition above also encompasses the convolution of continuous functions, Lebesgue integrable functions, and signed measures.
	We want to note that the spherical convolution of two signed measures is again a signed measure. Also, the convolution of a signed measure and an $L^1(\S^{n-1})$ function is an $L^1(\S^{n-1})$ function. In particular, identities such as \cref{eq:gen_fct} are to be understood in a weak sense, as an equality in the space $L^1(\S^{n-1})$.
	
	Moreover, the convolution of a signed measure and a continuous function is again a continuous function that can be expressed by an integral representation. If $f\in C(\S^{n-1})$ and $\mu\in\mathcal{M}(\S^{n-1})^{\mathrm{zonal}}$, then
	\begin{equation*}
		(f\ast\mu)(\vartheta e_n)
		= \int_{\S^{n-1}} f(\vartheta v)~\mu(dv),
		\qquad \vartheta\in\SO(n).
	\end{equation*}
	In the case where $\mu\in\mathcal{M}(\S^{n-1})$ and $f\in C(\S^{n-1})^{\mathrm{zonal}}$, there is a unique function $\bar{f} \in C[-1,1]$ such that $f(w)=\bar{f}(\langle e_n,w\rangle)$ for all $w\in\S^{n-1}$, and then,
	\begin{equation*}
		(\mu\ast f)(u)
		= \int_{\S^{n-1}} \bar{f}(\langle u,v\rangle) ~\mu(dv),
		\qquad u\in\S^{n-1}.
	\end{equation*}

	%%%%%%%%%%%%%%%%%%%%%%%%%%%%%%%%%%%%%%%%%%%%%%%%%%%%%%%%%%%%%%%%%%%%%%%%%%%
	\section{Spherical Harmonics}
	\label{sec:harmonics}
	%%%%%%%%%%%%%%%%%%%%%%%%%%%%%%%%%%%%%%%%%%%%%%%%%%%%%%%%%%%%%%%%%%%%%%%%%%%
	
	As a general reference for this section, we cite the monograph by Groemer~\cite{Groemer1996}.
	Denote by $\mathcal{H}^n_k$ the space of \emph{spherical harmonics} of dimension $n\geq3$ and degree $k\geq 0$, that is, harmonic, $k$-homogeneous polynomials, restricted to the unit sphere. These turn out to be precisely the eigenspaces of the spherical Laplacian, that is, $\Delta_{\S^{n-1}}Y_k=-k(k+n-2)Y_k$ for every $Y_k\in\mathcal{H}^n_k$. Recall that $\Delta_{\S^{n-1}}$ is a second-order uniformly elliptic self-adjoint differential operator that has compact resolvent and intertwines rotations. Consequently, 
	$L^2(\S^{n-1})$ decomposes into an orthogonal direct sum of the spaces $\mathcal{H}^n_k$, each of which is finite dimensional and $\SO(n)$-irreducible. 
	
	For each $k\geq 0$, the subspace of zonal spherical harmonics in $\mathcal{H}^n_k$ is of dimension one and spanned by $P^n_k(\langle e_n,{}\cdot{}\rangle)$, where $P^n_k$ denotes the Legendre polynomial of dimension $n\geq 3$ and degree $k\geq 0$. It can be defined by \emph{Rodrigues' formula}, which states that
	\begin{equation}\label{eq:Rodrigues_formula}
		P^n_k(t)
		= (-1)^k \frac{\Gamma\big(\frac{n-1}{2}\big)}{2^k\Gamma\big(\frac{n-1}{2} + k\big)} (1-t^2)^{-\frac{n-3}{2}} \left(\frac{d}{dt} \right)^k (1-t^2)^{\frac{n-3}{2}+ k}.
	\end{equation}
	The orthogonal projection $\pi_k$ from $L^2(\S^{n-1})$ onto the subspace $\mathcal{H}^n_k$ is given by
	\begin{equation*}
		\pi_k \phi
		= \tfrac{\dim\mathcal{H}^n_k}{\omega_{n-1}}~ \phi\ast P^n_k(\langle e_n,{}\cdot{}\rangle),
		\qquad \phi\in L^2(\S^{n-1}).
	\end{equation*}
	As a convolution transform, $\pi_k$ extends to a map from $C^{-\infty}(\S^{n-1})$ onto $\mathcal{H}^n_k$, and for every distribution $\nu\in C^{-\infty}(\S^{n-1})$, its formal Fourier series $\sum_{k=0}^\infty \pi_k\nu$, also called its \emph{spherical harmonic expansion}, converges to $\nu$ in the weak sense. Moreover, if $\gamma\in C^{-\infty}(\S^{n-1})^{\mathrm{zonal}}$, then
	\begin{equation*}
		\gamma
		= \sum_{k=0}^\infty \tfrac{\dim\mathcal{H}^n_k}{\omega_{n-1}} a^n_k[\gamma] P^n_k(\langle e_n,{}\cdot{}\rangle),
	\end{equation*}
	where $a^n_k[\gamma]=\langle P^n_k(\langle e_n,{}\cdot{}\rangle), \gamma \rangle_{\S^{n-1}}$.
	The \emph{Funk--Hecke Theorem} states that the convolution product of two distributions $\nu\in C^{-\infty}(\S^{n-1})$ and $\gamma\in C^{-\infty}(\S^{n-1})^{\mathrm{zonal}}$ has the spherical harmonic expansion
	\begin{equation*}
		\nu\ast\gamma
		= \sum_{k=0}^\infty a^n_k[\gamma] \pi_k \nu.
	\end{equation*}
	That is, the convolution transform $\nu\mapsto \nu\ast\gamma$ acts as a multiple of the identity on each space $\mathcal{H}^n_k$ of spherical harmonics. The Fourier coefficients $a^n_k[\gamma]$ are thus also called \emph{multipliers}. 
	
	If $\gamma$ is defined by some $L^1(\S^{n-1})$~function, then the multipliers can be computed using cylindrical coordinates on the sphere: If $g:(-1,1)\to\R$ is measurable and $(1-t^2)^{\frac{n-3}{2}}g(t)$ is integrable on $(-1,1)$, then $g(\langle e_n,{}\cdot{}\rangle)\in L^1(\S^{n-1})^{\mathrm{zonal}}$ and 
	\begin{equation}\label{eq:multipliers_interval}
		a^n_k[g(\langle e_n,{}\cdot{}\rangle)]
		= \omega_{n-1} \int_{(-1,1)} P^n_k(t) (1-t^2)^{\frac{n-3}{2}} g(t) dt.
	\end{equation}
	For convenience, we use the convention $a^n_k[g]=a^n_k[g(\langle e_n,{}\cdot{}\rangle)]$.
	
	Finally, we prove \cref{multiplier_diff}, which is a refinement of \cite[Lemma~3.6]{OrtegaMoreno2023}. To that end, we need the following technical integration by parts lemma.
	
	\begin{lem}[{\cite[Lemma~3.3]{Brauner2023}}]\label{alpha_int_by_parts}
		Let $\alpha>0$ and let $g\in\D'(-1,1)$ be such that $(1-t^2)^{\frac{\alpha}{2}}g'(t)$ is a finite signed measure on $(-1,1)$. Then $g$ is a locally integrable function and $(1-t^2)^{\frac{\alpha-2}{2}}g(t)\in L^1(-1,1)$. Moreover, whenever $\psi\in C^1(-1,1)$ such that both $(1-t^2)^{-\frac{\alpha-2}{2}}\psi'(t)$ and $(1-t^2)^{\frac{\alpha}{2}}\psi(t)$ are bounded, then
		\begin{equation}\label{eq:alpha_int_by_parts}
			\int_{(-1,1)} \psi(t) g'(dt)
			= - \int_{(-1,1)} \psi'(t)g(t) dt.
		\end{equation}
	\end{lem}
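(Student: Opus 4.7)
The plan is to construct $g$ explicitly from the finite signed measure $\mu := (1-t^2)^{\alpha/2} g'$, derive the weighted $L^1$ bound by a Fubini argument, and then establish the integration by parts formula via a cutoff approximation combined with a direct Fubini computation of the resulting boundary contributions.

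First, I would define $F(t) := \int_{(0,t]} (1-s^2)^{-\alpha/2} d\mu(s)$ for $t \geq 0$, with the symmetric convention $F(t) := -\int_{(t,0]} (1-s^2)^{-\alpha/2} d\mu(s)$ for $t < 0$. Since $(1-s^2)^{-\alpha/2}$ is locally bounded on $(-1,1)$ and $\mu$ is finite, $F$ is well-defined, locally of bounded variation, and satisfies $F' = (1-t^2)^{-\alpha/2} \mu = g'$ in the sense of distributions; connectedness of $(-1,1)$ then yields $g = F + c$ for some constant $c \in \R$, and in particular $g$ is locally integrable. For the weighted $L^1$ estimate, the constant part contributes $|c| \int_{-1}^1 (1-t^2)^{(\alpha-2)/2} dt = |c| B(\tfrac{1}{2}, \tfrac{\alpha}{2})$, which is finite precisely because $\alpha>0$. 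For the $F$-part, the bound $|F(t)| \leq \int_{(0,|t|]} (1-s^2)^{-\alpha/2} d|\mu|$ combined with Fubini gives
\begin{equation*}
\int_0^1 (1-t^2)^{\frac{\alpha-2}{2}} |F(t)|\, dt \leq \int_{(0,1)} (1-s^2)^{-\frac{\alpha}{2}} \left( \int_s^1 (1-t^2)^{\frac{\alpha-2}{2}} dt \right) d|\mu|(s).
\end{equation*}
The inner integral is dominated by $C(1-s)^{\alpha/2} \leq C(1-s^2)^{\alpha/2}$ for $s \in [0,1]$ (using $(1+s)^{\alpha/2} \geq 1$), so the double integral is bounded by $C |\mu|((0,1)) < \infty$; the symmetric estimate on $[-1,0]$ completes this step.

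For the integration by parts formula, the identity is immediate for $\psi \in C_c^1(-1,1)$ by the distributional definition of $g'$, and the strategy is to extend to general $\psi$ satisfying the stated bounds by approximation. Picking a cutoff $\eta_\varepsilon \in C_c^\infty(-1,1)$ with $\eta_\varepsilon \equiv 1$ on $[-1+2\varepsilon, 1-2\varepsilon]$ and $|\eta_\varepsilon'| \leq C/\varepsilon$, applying the classical integration by parts to $\psi\eta_\varepsilon$ and passing $\varepsilon \to 0$, the term $\int \psi' \eta_\varepsilon g \, dt$ converges to $\int \psi' g \, dt$ by dominated convergence from the $L^1$ bound just established combined with $|\psi'| \leq C (1-t^2)^{(\alpha-2)/2}$. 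The main obstacle, which I view as the heart of the lemma, is controlling the boundary-remainder $\int \psi \eta_\varepsilon' g \, dt$, since $|\eta_\varepsilon'| \sim 1/\varepsilon$ on a strip of width $\varepsilon$ near $\pm 1$ and the $L^1$ bound on $(1-t^2)^{(\alpha-2)/2} g$ alone is not strong enough. To bypass this, I plan to compute $-\int \psi'(t) F(t)\, dt$ directly via Fubini on $[0,1]$ and $[-1,0]$ separately: this produces
\begin{equation*}
\int \psi(s) (1-s^2)^{-\alpha/2} d\mu(s) - \psi(1^-) F(1^-) + \psi(-1^+) F(-1^+),
\end{equation*}
so together with the constant contribution $-c(\psi(1^-)-\psi(-1^+))$ from $-\int \psi' c \, dt$, the identity reduces to showing that the boundary terms $\psi(1^-) g(1^-)$ and $\psi(-1^+) g(-1^+)$ vanish in a suitable symmetric sense. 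The key estimate $|\psi(t) F(t)| \leq C \int_{(0,t]} \bigl( \tfrac{1-t^2}{1-s^2} \bigr)^{\alpha/2} d|\mu|$ together with the absence of atoms of $\mu$ at the endpoints of the open interval $(-1,1)$ should force the contribution from any one-sided neighborhood of $\pm 1$ to become arbitrarily small, yielding the vanishing of the boundary terms and completing the proof.
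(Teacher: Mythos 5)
Your decomposition $g = F + c$, the Fubini argument for the weighted $L^1$ bound, and the reduction of the integration-by-parts formula to a boundary analysis via a second Fubini computation are all sound and are essentially the natural route. The problem is the last step, and it is a genuine gap rather than a missing detail. Your ``key estimate'' $|\psi(t)F(t)| \leq C\int_{(0,t]} \bigl((1-t^2)/(1-s^2)\bigr)^{\alpha/2}\,d|\mu|(s)$ needs $|\psi(t)| \leq C(1-t^2)^{\alpha/2}$, that is, it needs $(1-t^2)^{-\alpha/2}\psi(t)$ to be bounded --- not $(1-t^2)^{\alpha/2}\psi(t)$ as the statement says. Those two conditions are very different: the latter is in fact automatic, since $(1-t^2)^{(2-\alpha)/2}\psi'$ bounded and $\alpha>0$ give $\int_t^1|\psi'|\,ds<\infty$, so $\psi$ extends continuously to $[-1,1]$ and is merely bounded. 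With only $|\psi|\le M$ your bound degrades to $|\psi F|\le M\int_{(0,t]}(1-s^2)^{-\alpha/2}\,d|\mu|$, which is monotone in $t$ and need not tend to $0$, and independently the constant contribution $-\int\psi'\,c\,dt = -c(\psi(1^-)-\psi(-1^+))$ does not vanish. Indeed, with the hypotheses as literally stated the conclusion fails: take $\alpha=2$, $g\equiv 1$, $\psi(t)=t$; then $(1-t^2)g'\equiv 0$ is a finite signed measure, $(1-t^2)^0g\in L^1(-1,1)$, $\psi'\equiv1$ is bounded, $(1-t^2)\psi$ is bounded, yet $\int\psi\,g'(dt)=0$ while $-\int\psi'g\,dt=-2$.

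The source of the discrepancy is almost certainly a sign in the hypothesis on $\psi$: the condition should be that $(1-t^2)^{-\alpha/2}\psi(t)$ is bounded, which forces $\psi(\pm1^\mp)=0$, and this is consistent with the one place the lemma is invoked here, the proof of \cref{multiplier_diff}, where $\psi(t)$ is a polynomial multiple of $(1-t^2)^{(n-1)/2}$ with $\alpha=n-1$. Under that stronger hypothesis your scheme closes cleanly: $\psi(1^-)=\psi(-1^+)=0$ kills the $c$-terms outright, and the Fubini identity on $[0,1]$ reads $-\int_0^1\psi'F\,dt = \int_{(0,1)}(1-s^2)^{-\alpha/2}\bigl[\psi(s)-\psi(1^-)\bigr]\,d\mu(s) = \int_{(0,1)}\psi\,g'(dt)$ directly, without ever needing $F(1^-)$ to exist as a separate limit, and symmetrically on $[-1,0]$. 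So the plan is right, but as written your proof tacitly uses $|\psi|\lesssim(1-t^2)^{\alpha/2}$ in the final estimate while stating the opposite bound a few lines above.
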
	
	
	\begin{proof}[Proof of \cref{multiplier_diff}]
		By \cref{alpha_int_by_parts}, we have that $(1-t^2)^{\frac{n-3}{2}}g(t)\in L^1(-1,1)$.
		Combining \cref{eq:multipliers_interval} with identity~\cref{eq:alpha_int_by_parts} in the instance where $\psi=P^{n+2}_k$ yields
		\begin{align*}
			&\frac{1}{\omega_{n+1}}a^{n+2}_k[g']
			=\int_{(-1,1)} P^{n+2}_k(t) (1-t^2)^{\frac{n-1}{2}} g'(t) dt \\
			&\qquad = (n-1) \int_{(-1,1)} P^n_{k+1}(t) (1-t^2)^{\frac{n-3}{2}} g(t) dt
			= \frac{n-1}{\omega_{n-1}} a^n_{k+1}[g],
		\end{align*}
		where in the second equality, we also employed Rodrigues' formula \cref{eq:Rodrigues_formula}.		
		Since $(n-1)\omega_{n+1}=2\pi\omega_{n-1}$, this yields \cref{eq:multiplier_diff}.
	\end{proof}

	%%%%%%%%%%%%%%%%%%%%%%%%%%%%%%%%%%%%%%%%%%%%%%%%%%%%%%%%%%%%%%%%%%%%%%%%%%%	
	\section{Proof of \texorpdfstring{\cref{Radon_linear}}{Proposition~3.2}}
	\label{sec:proofs}
	%%%%%%%%%%%%%%%%%%%%%%%%%%%%%%%%%%%%%%%%%%%%%%%%%%%%%%%%%%%%%%%%%%%%%%%%%%%
	
	For the proof of \cref{Radon_down_linear}, we express the Radon type transform $\RT_{k,k-1}$ in terms of an actual Radon transform of some weighted spherical projections. To that end, if $E\subseteq F\subseteq \R^n$ are nested subspaces, we denote by $\pi_{E,m}^F$ and $\pi_{E,m}^{F \ast}$ the $m$-weighted spherical projection and lifting relative to $F$. Moreover, if $F\in \Gr_{n,k}$ and $E\in\Gr_{n,k-1}^F$, then for all $f\in C(\S^{k-1}(F))$
	\begin{equation}\label{eq:sphere_hyperplane_integration}
		\int_{\S^{k-1}(F)} f(u) du
		= \int_{\S^{k-2}(E)} \int_{\H^1(F;E,u)} f(v) \langle u,v\rangle^{k-2}dv ~du,
	\end{equation}
	where $\H^1(F;E,u)=\{ v\in\S^{k-1}(F)\setminus E^\perp: \pr_{\! E}v=u \}$ (cf.~\cite[(3.3)]{Goodey2011}).
	
	\begin{lem}\label{Radon_down_sph_proj}
		Let $1<k\leq n$. Then for all $\zeta\in C(\Fl_{n,k})$ and $(E,u)\in\Fl_{n,k-1}$,
		\begin{equation}\label{eq:Radon_down_sph_proj}
			[\RT_{k,k-1}\zeta](E,u)
			= \frac{\omega_{n-k+1}}{\omega_{n-k+2}} \int_{\Gr^E_{n,k}} [\pi^F_{E,n-2k+2}\zeta(F,{}\cdot{})](u) ~dF
		\end{equation}
	\end{lem}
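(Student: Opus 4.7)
The plan is to parametrize both integrals in \cref{eq:Radon_down_sph_proj} using the same cylindrical coordinates and check that they coincide term by term. The common coordinate space is $\S^{n-k}(E^\perp) \times (-\pi/2, \pi/2)$, where the sphere factor parametrizes $\Gr_{n,k}^E$ and the angular factor captures the remaining direction in the $2$-plane $\mathbb{R} u \oplus E^\perp$.

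For the left hand side, I set $W := (E \cap u^\perp)^\perp = \mathbb{R} u \oplus E^\perp$, which has dimension $n-k+2$. Every $F \in \Gr_{n,k-1}^{E \cap u^\perp}$ arises as $F_v := (E \cap u^\perp) \oplus \mathbb{R} v$ for some $v \in \S^{n-k+1}(W)$, and the $\SO$-invariant probability measure on $\Gr_{n,k-1}^{E \cap u^\perp}$ is the pushforward of $\omega_{n-k+2}^{-1} \mathcal{H}^{n-k+1}$ under $v \mapsto F_v$ (a 2-to-1 quotient). Applying cylindrical coordinates $v = \cos\theta\,u + \sin\theta\,w$ with $\theta \in [0,\pi]$ and $w \in \S^{n-k}(E^\perp)$ yields $dv = \sin^{n-k}\theta\, dw\, d\theta$. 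A direct computation then gives $F_v \vee u = E \oplus \mathbb{R} w =: G(w) \in \Gr_{n,k}^E$, and since $u \perp E \cap u^\perp$ and $u | \mathbb{R} v = \langle u, v\rangle v = \cos\theta\, v$, it follows that $\pr_{F_v^\perp} u = \sin\theta\, u - \cos\theta\, w$. After substituting $\phi := \theta - \pi/2$, the left hand side of \cref{eq:Radon_down_sph_proj} reads
\begin{equation*}
	\frac{1}{\omega_{n-k+2}} \int_{\S^{n-k}(E^\perp)} \int_{-\pi/2}^{\pi/2} \zeta(G(w), \cos\phi\, u + \sin\phi\, w) \cos^{n-k}\phi\, d\phi\, dw.
\end{equation*}

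For the right hand side, the $\SO$-invariant probability measure on $\Gr_{n,k}^E$ is the pushforward of $\omega_{n-k+1}^{-1}\mathcal{H}^{n-k}$ under $w \mapsto G(w)$. For fixed $w$, the half-sphere $\H^1(G(w); E, u)$ sits inside the $2$-plane $\mathbb{R} u \oplus \mathbb{R} w$ and is parametrized by $v' = \cos\phi\, u + \sin\phi\, w$, $\phi \in (-\pi/2, \pi/2)$, with $dv' = d\phi$ and $\langle u, v'\rangle = \cos\phi$. The definition of $\pi_{E, n-2k+2}^{G(w)}$ contributes the weight exponent $(k-1) + (n-2k+2) - 1 = n-k$, and the overall normalization $\omega_{n-k+1}/\omega_{n-k+2} \cdot 1/\omega_{n-k+1} = 1/\omega_{n-k+2}$ produces exactly the same expression as for the left hand side. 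The proof is therefore entirely a change-of-variables bookkeeping argument; the only technical care required is keeping track of the 2-to-1 quotient maps $\S^{n-k+1}(W) \to \Gr_{n,k-1}^{E \cap u^\perp}$ and $\S^{n-k}(E^\perp) \to \Gr_{n,k}^E$ and their associated normalizations, so no conceptual obstacle arises.
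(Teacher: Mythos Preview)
Your proof is correct and follows essentially the same route as the paper's. Both arguments parametrize $\Gr_{n,k-1}^{E\cap u^\perp}$ by the sphere $\S^{n-k+1}(E^\perp\vee u)$, pass to cylindrical coordinates over $\S^{n-k}(E^\perp)$, and use a $\pi/2$-shift (your substitution $\phi=\theta-\pi/2$ is exactly the paper's ``rotation by $\pi/2$ in the plane $u\vee w$''). The only cosmetic difference is that you compute both sides in the common coordinates and compare, whereas the paper transforms the left hand side step by step into the right hand side using the integration formula~\cref{eq:sphere_hyperplane_integration}; the underlying identities are the same.
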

	\begin{proof}
		First, we parametrize the Grassmann manifold $\Gr^{E\cap u^\perp}_{n,k-1}$ through setting $F=(E\cap u^\perp)\vee w$ for $F\in \Gr^{E\cap u^\perp}_{n,k-1}$ and $w\in \S^{n-k+1}(E^\perp\vee u)$. This yields
		\begin{align*}
			& [\RT_{k,k-1}\zeta](E,u)
			= \frac{1}{\omega_{n-k+2}} \int_{\S^{n-k+1}(E^\perp\vee u)} \zeta(E\vee w,\pr_{\! w^\perp} u) dw \\
			&\qquad = \frac{1}{\omega_{n-k+2}} \int_{\S^{n-k}(E^\perp)} \int_{\H^1(E^\perp\vee u;E^\perp,w)} \zeta(E\vee v,\pr_{\! v^\perp} u) \langle w,v\rangle ^{n-k}dv ~dw \\
			&\qquad = \frac{1}{\omega_{n-k+2}} \int_{\S^{n-k}(E^\perp)} \int_{\S^1(u\vee w)\cap w^+} \zeta(E\vee w,\pr_{\! v^\perp} u) \langle u , \pr_{\! v^\perp} u\rangle ^{n-k}dv ~dw,
		\end{align*}
		where the second equality is an application of \cref{eq:sphere_hyperplane_integration} relative to the space $E^\perp\vee u$ and in the final equality, we simply rewrote the inner integral. Next, observe that for every vector $w\in\S^{n-k}(E^\perp)$ and function $f \in C(\S^1(u\vee w))$,
		\begin{align*}
			\int_{\S^1(u\vee w)\cap w^+} f(\pr_{\! v^\perp} u) dv
			= \int_{\S^1(u\vee w)\cap u^+} f(v) dv
			= \int_{\H^1(E\vee w;E,u)} f(v) dv,
		\end{align*}
		as follows from applying a change of variables with respect to a rotation by $\pi/2$ in the plane $u\vee w$. Applying this to $f(v)=\zeta(E\vee w,v) \langle u,v\rangle ^{n-k}$ yields		
		\begin{align*}
			& [\RT_{k,k-1}\zeta](E,u)
			= \frac{1}{\omega_{n-k+2}} \int_{\S^{n-k}(E^\perp)} \int_{\H^1(E\vee w;E,u)} \zeta(E\vee w,v) \langle u,v\rangle^{n-k}dv ~dw \\
			&\qquad = \frac{\omega_{n-k+1}}{\omega_{n-k+2}} \int_{\Gr^E_{n,k}} \int_{\H^1(F;E,u)} \zeta(F,v) \langle u,v\rangle^{n-k} dy ~dF,
		\end{align*}
		where in the second equality, we applied the parametrization $F=E\vee w$ with $F\in \Gr^E_{n,k}$ and $w\in \S^{n-k}(E^\perp)$.
	\end{proof}

	We will need the fact that weighted spherical projections map linear functions to linear functions, which was shown in~\cite{Goodey2014}.
	
	\begin{lem}[{\cite[Lemma~2.3]{Goodey2014}}] \label{sph_proj_linear}
		Let $1\leq k\leq n$, $E\in\Gr_{n,k}$, and $m>-k$. Then for all $u\in\S^{n-1}$ and $v\in\S^{k-1}(E)$,
		\begin{equation}\label{eq:sph_proj_linear}
			[\pi_{E,m}\langle u,{}\cdot{} \rangle](v)
			= \frac{\omega_{n+m+1}}{\omega_{k+m+1}} \langle u,v\rangle.
		\end{equation}
	\end{lem}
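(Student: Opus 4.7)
The plan is a direct computation: parametrize the half-sphere $\H^{n-k}(E,v)$, kill one component by a parity argument, and evaluate a single Beta integral.

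First, dispose of the degenerate case $k=n$: then $E^\perp=\{o\}$ and $\H^0(E,v) = \{v\}$, so by the definition of $\pi_{E,m}$ both sides of \cref{eq:sph_proj_linear} equal $\langle u,v\rangle$. For $1\leq k<n$, recall that $\H^{n-k}(E,v) = \S^{n-k}(E^\perp\vee v)\cap v^+$. I would parametrize this open half-sphere by spherical cylinder coordinates with axis $v$: writing
$$w = \cos\theta\,v + \sin\theta\,\xi,\qquad \theta\in[0,\pi/2),\quad \xi\in\S^{n-k-1}(E^\perp),$$
the induced surface element is $\sin^{n-k-1}\theta\,d\theta\,d\xi$ (with the convention that $\S^{-1}$ does not occur since $k<n$).

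Decomposing $u = u|E + u|E^\perp$ and using $v\in E$ together with $\xi\in E^\perp$, one immediately obtains $\langle v,w\rangle = \cos\theta$ and
$$\langle u,w\rangle = \cos\theta\,\langle u,v\rangle + \sin\theta\,\langle u|E^\perp,\xi\rangle.$$
Substituting into the defining integral of $\pi_{E,m}$, the term involving $\xi$ vanishes after integration over $\S^{n-k-1}(E^\perp)$ by the oddness of $\xi\mapsto\langle u|E^\perp,\xi\rangle$ under $\xi\mapsto -\xi$. This leaves
$$[\pi_{E,m}\langle u,{}\cdot{}\rangle](v) = \langle u,v\rangle\,\omega_{n-k}\int_0^{\pi/2}\cos^{k+m}\theta\,\sin^{n-k-1}\theta\,d\theta,$$
and the hypothesis $m>-k$ is precisely what ensures convergence of the one-dimensional integral near $\theta=\pi/2$.

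Finally, I would evaluate the Beta integral as
$$\int_0^{\pi/2}\cos^{k+m}\theta\,\sin^{n-k-1}\theta\,d\theta = \tfrac12\,B\!\left(\tfrac{k+m+1}{2},\tfrac{n-k}{2}\right) = \frac{\Gamma\!\left(\tfrac{k+m+1}{2}\right)\Gamma\!\left(\tfrac{n-k}{2}\right)}{2\,\Gamma\!\left(\tfrac{n+m+1}{2}\right)},$$
and combine with $\omega_j = 2\pi^{j/2}/\Gamma(j/2)$; the $\Gamma\!\left(\tfrac{n-k}{2}\right)$ cancels against the factor in $\omega_{n-k}$, and the remaining Gamma- and $\pi$-powers collapse exactly to $\omega_{n+m+1}/\omega_{k+m+1}$, giving the claim. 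No step presents a real obstacle; the only care required is correct Gamma-function bookkeeping and checking the low-dimensional edge case $n-k=1$, where $\S^{n-k-1}(E^\perp)=\S^0=\{\pm e\}$ with $e$ a unit vector in $E^\perp$, in which the ``integral over $\xi$'' is the two-point sum and the parity argument still applies verbatim.
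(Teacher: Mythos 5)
Your proof is correct. Note, however, that the paper does not supply its own proof of this lemma: it simply cites it as \cite[Lemma~2.3]{Goodey2014}, so there is no internal argument to compare against. Your direct computation is a clean, self-contained derivation. The parametrization $w=\cos\theta\,v+\sin\theta\,\xi$ of the half-sphere $\H^{n-k}(E,v)=\S^{n-k}(E^\perp\vee v)\cap v^+$, with surface element $\sin^{n-k-1}\theta\,d\theta\,d\xi$, is correct, the $E^\perp$-component of $u$ integrates to zero by parity, and the surviving one-dimensional integral is indeed a Beta integral whose value, together with $\omega_j = 2\pi^{j/2}/\Gamma(j/2)$, yields
\begin{equation*}
\omega_{n-k}\cdot\frac{\Gamma\!\left(\tfrac{k+m+1}{2}\right)\Gamma\!\left(\tfrac{n-k}{2}\right)}{2\,\Gamma\!\left(\tfrac{n+m+1}{2}\right)}
= \frac{\pi^{(n-k)/2}\,\Gamma\!\left(\tfrac{k+m+1}{2}\right)}{\Gamma\!\left(\tfrac{n+m+1}{2}\right)}
= \frac{\omega_{n+m+1}}{\omega_{k+m+1}},
\end{equation*}
as claimed. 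Two small points worth registering: the integrand of the general operator $\pi_{E,m}$ behaves like $\cos^{k+m-1}\theta$ near $\theta=\pi/2$, so $m>-k$ is exactly the integrability condition for arbitrary continuous $f$ (for the specific $f=\langle u,{}\cdot{}\rangle$ the extra $\cos\theta$ from $\langle u,w\rangle$ even relaxes this to $m>-k-1$); and your treatment of the edge cases $k=n$ and $n-k=1$ is correct — in the latter, $\mathcal{H}^0$ on $\S^0(E^\perp)=\{\pm e\}$ is counting measure with total mass $\omega_1=2$, and the parity cancellation $\langle u|E^\perp,e\rangle+\langle u|E^\perp,-e\rangle=0$ goes through verbatim.
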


%	\begin{prop}
%		Let $1< k\leq n$.
%		Then $\RT_{k,k-1}\pi_1^{\langle k\rangle } = \pi_1^{\langle k-1\rangle } \RT_{k,k-1}'$.
%	\end{prop}
	\begin{proof}[Proof of \cref{Radon_linear}~\ref{Radon_down_linear}]
		Let $\zeta\in C(\Fl_{n,k})$ and denote by $z:\Gr_{n,k}\to\R^n$ the continuous function with the property that $\pi^F_1\zeta(F,{}\cdot{})=\langle z(F),{}\cdot{}\rangle$ for $F\in \Gr_{n,k}$.
		By combining \cref{eq:Radon_down_sph_proj} and \cref{eq:sph_proj_linear}, we have that for all $(E,u)\in \Fl_{n,k-1}$,		
		\begin{align*}
			&\frac{\omega_{n-k+2}}{\omega_{n-k+1}} [\RT_{k,k-1}\pi_1^{\langle k\rangle }\zeta](E,u)
			= \int_{\Gr_{n,k}^{E}} \!\! [\pi^{F}_{E,n-2k+2}\langle z(F),{}\cdot{}\rangle](u) dF \\
			&\qquad = \frac{\omega_{n-k+3}}{\omega_{n-k+2}} \int_{\Gr_{n,k}^{E}} \!\! \langle z(F),u\rangle dF
			= \frac{\omega_{n-k+3}}{\omega_{n-k+2}}\frac{1}{\omega_k} \int_{\Gr_{n,k}^{E}} \int_{\S^{k-1}(F)} \!\! \zeta(F,v)\langle u,v\rangle dv ~dF,
		\end{align*}
		Note that whenever $F\in\Gr_{n,k}^E$, then $[\pi^{F\ast}_{E,1}\langle u,{}\cdot{}\rangle](v) = \langle u,v \rangle$ for all $u\in\S^{k-2}(E)$ and $v\in\S^{k-1}(F)$, and thus,
		\begin{align*}
			&\int_{\S^{k-1}(F)} \! \zeta(F,v)\langle u,v\rangle dv
			= \int_{\S^{k-1}(F)} \! \zeta(F,v)[\pi^{F\ast}_{E,1}\langle u,{}\cdot{}\rangle](v) dv \\
			&\qquad = \int_{\S^{k-2}(E)} [\pi^{F}_{E,1}\zeta(F,{}\cdot{}) ](v) \langle u,v\rangle dv.
		\end{align*}
		By changing the order of integration, we obtain that
		\begin{align*}
			&[\RT_{k,k-1}\pi_1^{\langle k\rangle }\zeta](E,u)
			= \frac{\omega_{n-k+1}\omega_{n-k+3}}{\omega_{n-k+2}^2} \frac{1}{\omega_k} \int_{\S^{k-2}(E)} \int_{\Gr_{n,k}^{E}} \!\! [\pi^{F}_{E,1}\zeta(F,{}\cdot{}) ](v) dF~  \langle u,v\rangle dv \\
			&\qquad= \frac{1}{\omega_{k-1}} \int_{\S^{k-2}(E)} [\RT_{k,k-1}'\zeta](E,v) \langle u,v\rangle dv
			= [\pi_1^{\langle k-1\rangle }\RT_{k,k-1}'\zeta](E,u),
		\end{align*}
		which concludes the argument.
	\end{proof}
	
	Similar as for $\RT_{k,k-1}$, we can express the Radon type transform $\RT_{k,k+1}$ in terms of an actual Radon transform and the $1$-weighted spherical lifting. More precisely, if $1\leq k<n$, then for all $\zeta\in C(\Fl_{n,k})$ and $(E,u)\in \Fl_{n,k+1}$,
	\begin{equation}\label{eq:Radon_up_sph_lift}
		[\RT_{k,k+1}\zeta](E,u)
		= \int_{\Gr_{n,k}^{E}} [\pi_{F,1}^{E \ast}\zeta(F,{}\cdot{})](u)~dF,
	\end{equation}
	which is immediate from the definition. Now we prove \cref{Radon_up_linear} of \cref{Radon_linear}.
	
%	\begin{prop}
%		Let $1\leq k< n$. Then $ \RT_{k,k+1}\pi_1^{\langle k\rangle} =\frac{k+1}{k} \pi_1^{\langle k+1\rangle}\RT_{k,k+1}$.
%	\end{prop}
	\begin{proof}[Proof of \cref{Radon_linear}~\ref{Radon_up_linear}]
		Let $\zeta\in C(\Fl_{n,k})$ and $(E,u)\in\Fl_{n,k+1}$. Then by \cref{eq:Radon_up_sph_lift} and a change of order of integration,
		\begin{align*}
			&[\pi_1^{\langle k+1 \rangle}\RT_{k,k+1}\zeta](E,u)
			= \frac{1}{\omega_{k+1}} \int_{\Gr_{n,k}^{E}} \int_{\S^{k}(E)} [\pi_{F,1}^{E \ast}\zeta(F,{}\cdot{})](v) \langle u,v\rangle dv ~dF \\
			&\qquad = \frac{1}{\omega_{k+1}} \int_{\Gr_{n,k}^{E}} \int_{\S^{k-1}(F)} \zeta(F,v) [\pi_{F,1}^{E}\langle u,{}\cdot{}\rangle](v)dv ~dF \\
			&\qquad = \frac{\omega_{k+3}}{\omega_{k+2}}\frac{1}{\omega_{k+1}} \int_{\Gr_{n,k}^{E}} \int_{\S^{k-1}(F)} \zeta(F,v) \langle u,v\rangle dv ~dF,
		\end{align*}
		where the final equality is due to \cref{eq:sph_proj_linear}. Note that for every $F\in\Gr_{n,k}^{E}$,
		\begin{align*}
			\frac{1}{\omega_{k}} \int_{\S^{k-1}(F)} \zeta(F,v) \langle u,v\rangle dv
			= [\pi_1^{\langle k\rangle}\zeta](F,u)
			= [\pi_{F,1}^{E \ast} (\pi_1^{\langle k\rangle } \zeta)(F,{}\cdot{})](u).
		\end{align*}
		Plugging this into the expression above and applying \cref{eq:Radon_up_sph_lift} again, we obtain
		\begin{align*}
			&[\pi_1^{\langle k+1\rangle }\RT_{k,k+1}\zeta](E,u)
			= \frac{\omega_{k+3}}{\omega_{k+2}}\frac{\omega_{k}}{\omega_{k+1}} \int_{\Gr_{n,k}^{E}} [\pi_{F,1}^{E \ast} (\pi_1^{\langle k\rangle } \zeta)(F,{}\cdot{})](u) ~dF \\
			&\qquad = \frac{k}{k+1} [\RT_{k,k+1}\pi_1^{\langle k\rangle } \zeta](E,u),
		\end{align*}
		which concludes the argument.
	\end{proof}

	%%%%%%%%%%%%%%%%%%%%%%%%%%%%%%%%%%%%%%%%%%%%%%%%%%%%%%%%%%%%%%%%%%%%%%%%%%%

	%%%%%%%%%%%%%%%%%%%%%%%%%%%%%%%%%%%%%%%%%%%%%%%%%%%%%%%%%%%%%%%%%%%%%%%%%%%

	\section*{Acknowledgments}
	
	The first- and third-named author were supported by the Austrian Science Fund (FWF), project number: P31448-N35.
	The third-named author was supported by the Austrian Science Fund (FWF), project number: ESP~236 ESPRIT program.

	%%%%%%%%%%%%%%%%%%%%%%%%%%%%%%%%%%%%%%%%%%%%%%%%%%%%%%%%%%%%%%%%%%%%%%%%%%%
	\begingroup
	\bibliographystyle{abbrv}
	\bibliography{references}{}
	\endgroup

\end{document}